\newcommand{\eps}{\varepsilon}
\newcommand{\argmin}{\mathop{\rm arg\min}}
\newtheorem{thm}{Theorem}
\newtheorem{lem}{Lemma}
\newtheorem{rem}{Remark}
\newtheorem{cor}{Corollary}
\newtheorem{lem2}{Lemma}[section]
\newcommand{\Var}{\operatorname{Var}}
\newcommand{\Unif}{\operatorname{Unif}}
\newcommand{\ERM}{\operatorname{ERM}}
\newcommand{\Ham}{\operatorname{Ham}}
\newcommand{\Mult}{\operatorname{Mult}}
\newcommand{\Mon}{\operatorname{Mon}}
\renewcommand{\Hat}{\operatorname{Hat}}
\newcommand{\KL}{\operatorname{KL}}
\newcommand{\R}{\mathbb{R}}
\newcommand{\bt}{\mathbf{t}}
\newcommand{\ba}{\mathbf{a}}
\newcommand{\bd}{\mathbf{d}}
\newcommand{\mC}{\mathcal{C}}
\newcommand{\mD}{\mathcal{D}}
\newcommand{\mF}{\mathcal{F}}
\newcommand{\mG}{\mathcal{G}}
\newcommand{\mN}{\mathcal{N}}
\newcommand{\mW}{\mathcal{W}}
\renewcommand{\ba}{\mathbf{a}}
\newcommand{\bD}{\mathbf{D}}
\newcommand{\bp}{\mathbf{p}}
\newcommand{\bq}{\mathbf{q}}
\newcommand{\bu}{\mathbf{u}}
\newcommand{\bv}{\mathbf{v}}
\newcommand{\bw}{\mathbf{w}}
\newcommand{\bx}{\mathbf{x}}
\newcommand{\bX}{\mathbf{X}}
\newcommand{\by}{\mathbf{y}}
\newcommand{\balpha}{\bm{\alpha}}
\newcommand{\bbeta}{\bm{\beta}}
\newcommand{\bgamma}{\bm{\gamma}}
\newcommand{\blambda}{\bm{\lambda}}
\newcommand{\bLambda}{\bm{\Lambda}}
\newcommand{\bell}{\bm{\ell}}
\begin{document}

\begin{frontmatter}

\title{Nonparametric regression using deep neural networks with ReLU activation function}
\runtitle{Nonparametric regression using ReLU networks}


\author{\fnms{Johannes} \snm{Schmidt-Hieber}\ead[label=e1]{a.j.schmidt-hieber@utwente.nl}}
\address{University of Twente,\\ P.O. Box 217, \\ 7500 AE Enschede \\ The Netherlands\\ \printead{e1}}
\affiliation{University of Twente}

\runauthor{J. Schmidt-Hieber}

\begin{abstract}
Consider the multivariate nonparametric regression model. It is shown that estimators based on sparsely connected deep neural networks with ReLU activation function and properly chosen network architecture achieve the minimax rates of convergence (up to $\log n$-factors) under a general composition assumption on the regression function. The framework includes many well-studied structural constraints such as (generalized) additive models. While there is a lot of flexibility in the network architecture, the tuning parameter is the sparsity of the network. Specifically, we consider large networks with number of potential network parameters exceeding the sample size. The analysis gives some insights into why multilayer feedforward neural networks perform well in practice. Interestingly, for ReLU activation function the depth (number of layers) of the neural network architectures plays an important role and our theory suggests that for nonparametric regression, scaling the network depth with the sample size is natural. It is also shown that under the composition assumption wavelet estimators can only achieve suboptimal rates.
\end{abstract}

\begin{keyword}[class=MSC]
\kwd{62G08}
\end{keyword}

\begin{keyword}
\kwd{nonparametric regression}
\kwd{multilayer neural networks}
\kwd{ReLU activation function}
\kwd{minimax estimation risk}
\kwd{additive models}
\kwd{wavelets}
\end{keyword}

\end{frontmatter}

\section{Introduction}
In the nonparametric regression model with random covariates in the unit hypercube, we observe $n$ i.i.d. vectors $\bX_i \in [0,1]^{d}$ and $n$ responses $Y_i \in \mathbb{R}$ from the model  
\begin{align}
	Y_i =f_0(\bX_i) + \eps_i, \quad i =1, \ldots, n.	
	\label{eq.mod}
\end{align}
The noise variables $\eps_i$ are assumed to be i.i.d. standard normal and independent of $(\bX_i)_i.$ The statistical problem is to recover the unknown function $f_0: [0,1]^d \rightarrow \mathbb{R}$ from the sample $(\bX_i,Y_i)_i.$ Various methods exist that allow to estimate the regression function nonparametrically, including kernel smoothing, series estimators/wavelets and splines, cf. \cite{gyorfi2002, wasserman2006, tsybakov2009}. In this work, we consider fitting a multilayer feedforward artificial neural network to the data. It is shown that the estimator achieves nearly optimal convergence rates under various constraints on the regression function. 

Multilayer (or deep) neural networks have been successfully trained recently to achieve impressive results for complicated tasks such as object detection on images and speech recognition. Deep learning is now considered to be the state-of-the art for these tasks. But there is a lack of mathematical understanding. One problem is that fitting a neural network to data is highly non-linear in the parameters. Moreover,  the function class is non-convex and different regularization methods are combined in practice.

This article is inspired by the idea to build a statistical theory that provides some understanding of these procedures. As the full method is too complex to be theoretically tractable, we need to make some selection of important characteristics that we believe are crucial for the success of the procedure. 

To fit a neural network, an activation function $\sigma : \mathbb{R}\rightarrow \mathbb{R}$ needs to be chosen. Traditionally, sigmoidal activation functions (differentiable functions that are bounded and monotonically increasing) were employed. For deep neural networks, however, there is a computational advantage using the non-sigmoidal rectified linear unit (ReLU) $\sigma(x)=\max(x,0)=(x)_+.$ In terms of statistical performance, the ReLU outperforms sigmoidal activation functions for classification problems \cite{glorot2011, Pedamonti2018} but for regression this remains unclear, see \cite{bauer2017}, Supplement B. Whereas earlier statistical work focuses mainly on shallow networks with sigmoidal activation functions, we provide statistical theory specifically for deep ReLU networks. 

The statistical analysis for the ReLU activation function is quite different from earlier approaches and we discuss this in more detail in the overview on related literature in Section \ref{sec.history}. Viewed as a nonparametric method, ReLU networks have some surprising properties. To explain this, notice that deep networks with ReLU activation produce functions that are piecewise linear in the input. Nonparametric methods which are based on piecewise linear approximations are typically not able to capture higher-order smoothness in the signal and are rate-optimal only up to smoothness index two. Interestingly, ReLU activation combined with a deep network architecture achieves  near minimax rates for arbitrary smoothness of the regression function. 
  
The number of hidden layers of state-of-the-art network architectures has been growing over the past years, cf. \cite{szegedy2016}. There are versions of the recently developed deep network ResNet which are based on $152$ layers, cf. \cite{he2016}. Our analysis indicates that for the ReLU activation function the network depth should be scaled with the sample size. This suggests, that for larger samples, additional hidden layers should be added. 

Recent deep architectures include more network parameters than training samples. The well-known AlexNet \cite{krizhevsky2012} for instance is based on $60$ million network parameters using only 1.2 million samples. We account for high-dimensional parameter spaces in our analysis by assuming that the number of potential network parameters is much larger than the sample size. For noisy data generated from the nonparametric regression model, overfitting does not lead to good generalization errors and incorporating regularization or sparsity in the estimator becomes essential. In the deep networks literature, one option is to make the network thinner assuming that only few parameters are non-zero (or active), cf. \cite{Goodfellow-et-al-2016-Book}, Section 7.10. Our analysis shows that the number of non-zero parameters plays the role of the effective model dimension and - as is common in non-parametric regression - needs to be chosen carefully.

Existing statistical theory often requires that the size of the network parameters tends to infinity as the sample size increases. In practice, estimated network weights are, however, rather small. We can incorporate small parameters in our theory, proving that it is sufficient to consider neural networks with all network parameters bounded in absolute value by one. 

Multilayer neural networks are typically applied to high-dimensional input. Without additional structure in the signal besides smoothness, nonparametric estimation rates are then slow because of the well-known curse of dimensionality. This means that no statistical procedure can do well regarding pointwise reconstruction of the signal. Multilayer neural networks are believed to be able to adapt to many different structures in the signal, therefore avoiding the curse of dimensionality and achieving faster rates in many situations. In this work, we stick to the regression setup and show that deep ReLU networks can indeed attain faster rates under a hierarchical composition assumption on the regression function, which includes (generalized) additive models and the composition models considered in \cite{horowitz2007, juditsky2009, baraud2014, kohler2017, bauer2017}.

Parts of the success of multilayer neural networks can be explained by the fast algorithms that are available to estimate the network weights from data. These iterative algorithms are based on minimization of some empirical loss function using stochastic gradient descent. Because of the non-convex function space, gradient descent methods might get stuck in a saddle point or converge to one of the potentially many local minima. \cite{choromanska2015} derives a heuristic argument showing that the risk of most of the local minima is not much larger than the risk of the global minimum. Despite the huge number of variations of the stochastic gradient descent, the common objective of all approaches is to reduce the empirical loss. In our framework we associate to any network reconstruction method a parameter quantifying the expected discrepancy between the achieved empirical risk and the global minimum of the energy landscape. The main theorem then states that a network estimator is minimax rate optimal (up to log factors) if and only if the method almost minimizes the empirical risk. 

We also show that wavelet series estimators are unable to adapt to the underlying structure under the composition assumption on the regression function. By deriving lower bounds, it is shown that the rates are suboptimal by a polynomial factor in the sample size $n.$ This provides an example of a function class for which fitting a neural network outperforms wavelet series estimators.

Our setting deviates in two aspects from the computer science literature on deep learning. Firstly, we consider regression and not classification. Secondly, we restrict ourselves in this article to multilayer feedforward artificial neural networks, while most of the many recent deep learning applications have been obtained using specific types of networks such as convolutional or recurrent neural networks. 

The article is structured as follows. Section \ref{sec.multilayer_NN} introduces multilayer feedforward artificial neural networks and discusses mathematical modeling. This section also contains the definition of the network classes. The considered function classes for the regression function and the main result can be found in Section \ref{sec.main}. Specific structural constraints such as additive models are discussed in Section \ref{sec.examples}. In Section \ref{sec.wavelets} it is shown that wavelet estimators can only achieve suboptimal rates under the composition assumption. We give an overview of relevant related literature in Section \ref{sec.history}. The proof of the main result together with additional discussion can be found in Section \ref{sec.proofs}.

{\it Notation:} Vectors are denoted by bold letters, e.g. $\bx :=(x_1,\ldots, x_d)^\top.$ As usual, we define $|\bx|_p := (\sum_{i=1}^d |x_i|^p)^{1/p},$ $|\bx |_\infty := \max_i |x_i|,$ $|\bx|_0 := \sum_i \mathbf{1}(x_i \neq 0),$ and write $\|f\|_p := \|f \|_{L^p(D)}$ for the $L^p$-norm on $D,$ whenever there is no ambiguity of the domain $D.$ For two sequences $(a_n)_n$ and $(b_n)_n,$ we write $a_n \lesssim b_n$ if there exists a constant $C$ such that $a_n \leq Cb_n$ for all $n.$ Moreover, $a_n \asymp b_n$ means that $a_n \lesssim b_n$ and $b_n \lesssim a_n.$ We denote by $\log_2$ the logarithm with respect to the basis two and write $\lceil x \rceil$ for the smallest integer $\geq x.$ 

\section{Mathematical definition of multilayer neural networks}
\label{sec.multilayer_NN}

{\quad} \\
{\bf Definitions:} Fitting a multilayer neural network requires the choice of an activation function $\sigma:\mathbb{R}\rightarrow \mathbb{R}$ and the network architecture. Motivated by the importance in deep learning, we study the rectifier linear unit (ReLU) activation function $$\sigma(x) = \max(x,0).$$ For $\bv=(v_1, \ldots, v_r)\in \mathbb{R}^r,$ define the shifted activation function $\sigma_{\bv}: \mathbb{R}^r \rightarrow \mathbb{R}^r$ as 
\begin{align*}
	\sigma_{\bv}
	\left(
	\begin{array}{c}
	y_1 \\
	\vdots \\
	y_r
	\end{array}
	\right)
	=
	\left(
	\begin{array}{c}
	\sigma(y_1-v_1) \\
	\vdots \\
	\sigma(y_r-v_r)
	\end{array}
	\right).
\end{align*}
The network architecture $(L, \bp)$ consists of a positive integer $L$ called the {\it number of hidden layers} or {\it depth} and a {\it width vector} $\bp=(p_0, \ldots, p_{L+1}) \in \mathbb{N}^{L+2}.$ A neural network with network architecture $(L, \bp)$ is then any function of the form 
\begin{align}
	f: \mathbb{R}^{p_0} \rightarrow \mathbb{R}^{p_{L+1}}, \quad \bx \mapsto f(\bx) = W_L  \sigma_{\bv_L}   W_{L-1}  \sigma_{\bv_{L-1}}  \cdots  W_1 \sigma_{\bv_1}  W_0\bx,
	\label{eq.NN}
\end{align}
where $W_i$ is a $p_{i+1} \times p_i$ weight matrix and $\bv_i \in \mathbb{R}^{p_i}$ is a shift vector. Network functions are therefore build by alternating matrix-vector multiplications with the action of the non-linear activation function $\sigma.$ In \eqref{eq.NN}, it is also possible to omit the shift vectors by considering the input $(\bx,1)$ and enlarging the weight matrices by one row and one column with appropriate entries. For our analysis it is, however, more convenient to work with representation \eqref{eq.NN}. To fit networks to data generated from the $d$-variate nonparametric regression model we must have $p_0=d$ and $p_{L+1} =1.$

\begin{figure}
\begin{center}
	\includegraphics[scale=0.3]{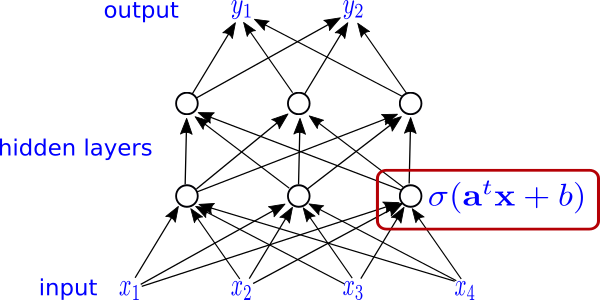} 
\end{center}
\caption{\label{fig.NN} Representation as a direct graph of a network with two hidden layers $L=2$ and width vector $\bp=(4,3,3,2).$}
\end{figure}

In computer science, neural networks are more commonly introduced via their representation as directed acyclic graphs, cf. Figure \ref{fig.NN}. Using this equivalent definition, the nodes in the graph (also called {\it units}) are arranged in layers. The  input layer is the first layer and the output layer the last layer. The layers that lie in between are called hidden layers. The number of hidden layers corresponds to $L$ and the number of units in each layer generates the width vector $\bp.$ Each node/unit in the graph representation stands for a scalar product of the incoming signal with a weight vector which is then shifted and applied to the activation function. 

{\bf Mathematical modeling of deep network characteristics:} Given a network function $f(\bx) = W_L \sigma_{\bv_L}   W_{L-1}  \sigma_{\bv_{L-1}}  \cdots  W_1 \sigma_{\bv_1}  W_0\bx,$ the network parameters are the entries of the matrices $(W_j)_{j=0, \ldots, L}$ and vectors $(\bv_j)_{j=1, \ldots, L}.$ These parameters need to be estimated/learned from the data. 

The aim of this article is to consider a framework that incorporates essential features of modern deep network architectures. In particular, we allow for large depth $L$ and large number of potential network parameters. For the main result, no upper bound on the number of network parameters is needed. Thus we consider high-dimensional settings with more parameters than training data. 

Another characteristic of trained networks is that the size of the learned network parameters is typically not very large. Common network initialization methods initialize the weight matrices $W_j$ by a (nearly) orthogonal random matrix if two successive layers have the same width, cf. \cite{Goodfellow-et-al-2016-Book}, Section 8.4. In practice, the trained network weights are typically not far from the initialized weights. Since in an orthogonal matrix all entries are bounded in absolute value by one, also the trained network weights will not be large.

Existing theoretical results, however, often require that the size of the network parameters tends to infinity. If large parameters are allowed, one can for instance easily approximate step functions by ReLU networks. To be more in line with what is observed in practice, we consider networks with all parameters bounded by one. This constraint can be easily build into the deep learning algorithm by projecting the network parameters in each iteration onto the interval $[-1,1].$

If $\|W_j\|_\infty$ denotes the maximum-entry norm of $W_j,$ the space of network functions with given network architecture and network parameters bounded by one is
\begin{align}
	\mF(L, \bp):= \Big\{f \, \text{of the form \eqref{eq.NN}} \, : \max_{j = 0, \ldots, L} \|W_j\|_{\infty} \vee |\bv_j|_\infty \leq 1 \Big\},
	\label{eq.defi_bd_parameter_space}
\end{align}
with the convention that $\bv_0$ is a vector with all components equal to zero.

In deep learning, sparsity of the neural network is enforced through regularization or specific forms of networks. Dropout for instance sets randomly units to zero and has the effect that each unit will be active only for a small fraction of the data, cf. \cite{srivastava2014}, Section 7.2. In our notation this means that each entry of the vectors $\sigma_{\bv_k} W_{k-1}  \cdots  W_1 \sigma_{\bv_1}  W_0\bx,$ $k=1, \ldots, L$ is zero over a large range of the input space $\bx \in [0,1]^d.$ Convolutional neural networks filter the input over local neighborhoods. Rewritten in the form \eqref{eq.NN} this essentially means that the $W_i$ are banded Toeplitz matrices. All network parameters corresponding to higher off-diagonal entries are thus set to zero. 

In this work we model the network sparsity assuming that there are only few non-zero/active network parameters. If $\|W_j\|_0$ denotes the number of non-zero entries of $W_j$ and $\| |f|_\infty \|_\infty$ stands for the sup-norm of the function $\bx \mapsto |f(\bx)|_\infty,$ then the $s$-sparse networks are given by
\begin{align}
	\begin{split}
	\mF(L, \bp, s)
	&:=\mF(L, \bp, s, F) \\
	&:= \Big\{f \in \mathcal{F}(L, \bp) \, : \sum_{j=0}^L \|W_j\|_0 + |\bv_j|_0 \leq s, \, \big\| |f|_\infty \big\|_\infty \leq F \Big\}.
	\end{split}
	\label{eq.defi_bd_sparse_para_space}
\end{align}
The upper bound on the uniform norm of $f$ is most of the time dispensable and therefore omitted in the notation. We consider cases where the number of network parameters $s$ is small compared to the total number of parameters in the network.

In deep learning, it is common to apply variations of stochastic gradient descent combined with other techniques such as dropout to the loss induced by the log-likelihood (see Section 6.2.1.1 in \cite{Goodfellow-et-al-2016-Book}). For nonparametric regression with normal errors, this coincides with the least-squares loss (in machine learning terms this is the cross-entropy for this model, cf. \cite{Goodfellow-et-al-2016-Book}, p.129). The common objective of all reconstruction methods is to find networks $f$ with small empirical risk $\tfrac 1n \sum_{i=1}^n(Y_i -f (\bX_i))^2.$ For any estimator $\widehat{f}_n$ that returns a network in the class $\mF(L, \bp, s, F)$ we define the corresponding quantity
\begin{align}
	\begin{split}
	&\Delta_n(\widehat f_n, f_0) \\
	&:= E_{f_0}\Big[\frac 1n \sum_{i=1}^n(Y_i -\widehat f_n (\bX_i))^2 - \inf_{f\in\mF(L, \bp, s, F)} \frac 1n \sum_{i=1}^n(Y_i -f(\bX_i))^2 \Big].
	\end{split}
	\label{eq.estimator}
\end{align}
The sequence $\Delta_n(\widehat f_n, f_0)$ measures the difference between the expected empirical risk of $\widehat f_n$ and the global minimum over all networks in the class. The subscript $f_0$ in $E_{f_0}$ indicates that the expectation is taken with respect to a sample generated from the nonparametric regression model with regression function $f_0.$ Notice that $\Delta_n(\widehat f_n, f_0) \geq 0$ and $\Delta_n(\widehat f_n, f_0) =0$ if $\widehat{f}_n$ is an empirical risk minimizer.

To evaluate the statistical performance of an estimator $\widehat f_n,$ we derive bounds for the prediction error
\begin{align*}
	R(\widehat f_n, f_0) := E_{f_0}\big[\big( \widehat f_n(\bX) - f_0(\bX) \big)^2\big],
\end{align*}
with $\bX \stackrel{\mathcal{D}}{=}\bX_1$ being independent of the sample $(\bX_i,Y_i)_i.$

The term $\Delta_n(\widehat f_n, f_0)$ can be related via empirical process theory to constant $\times (R(\widehat f_n, f_0)- R(\widehat f_n^{\ERM}, f_0)) +$ remainder, with $\widehat f_n^{\ERM}$ an empirical risk minimizer. Therefore, $\Delta_n(\widehat f_n, f_0)$ is the key quantity that together with the minimax estimation rate sharply determines the convergence rate of $\widehat f_n$ (up to $\log n$-factors). Determining the decay of $\Delta_n(\widehat f_n, f_0)$ in $n$ for commonly employed methods such as stochastic gradient descent is an interesting problem in its own. We only sketch a possible proof strategy here. Because of the potentially many local minima and saddle points of the loss surface or energy landscape, gradient descent based methods have only a small chance to reach the global minimum without getting stuck in a local minimum first. By making a link to spherical spin glasses, \cite{choromanska2015} provide a heuristic suggesting that the loss of any local minima lies in a band that is lower bounded by the loss of the global minimum. The width of the band depends on the width of the network. If the heuristic argument can be made rigorous, then the width of the band provides an upper bound for $\Delta_n(\widehat f_n, f_0)$ for all methods that converge to a local minimum. This would allow us then to study deep learning without an explicit analysis of the algorithm. For more on the energy landscape, see \cite{liao2017}.


\section{Main results}
\label{sec.main}

The theoretical performance of neural networks depends on the underlying function class. The classical approach in nonparametric statistics is to assume that the regression function is $\beta$-smooth. The minimax estimation rate for the prediction error is then $n^{-2\beta/(2\beta+d)}.$ Since the input dimension $d$ in neural network applications is very large, these rates are extremely slow. The huge sample sizes often encountered in deep learning applications are by far not sufficient to compensate the slow rates. 

We therefore consider a function class that is natural for neural networks and exhibits some low-dimensional structure that leads to input dimension free exponents in the estimation rates. We assume that the regression function $f_0$ is a composition of several functions, that is,
\begin{align}
	f_0 = g_q \circ g_{q-1} \circ \ldots \circ g_1 \circ g_0
  \label{eq.mult_composite_regression}	
\end{align}
with $g_i :[a_i,b_i]^{d_i} \rightarrow [a_{i+1},b_{i+1}]^{d_{i+1}}.$ Denote by $g_i=(g_{ij})_{j=1,\ldots,d_{i+1}}^\top$ the components of $g_i$ and let $t_i$ be the maximal number of variables on which each of the $g_{ij}$ depends on. Thus, each $g_{ij}$ is a $t_i$-variate function. As an example consider the function $f_0(x_1,x_2,x_3) = g_{11}(g_{01}(x_1,x_3), g_{02}(x_1,x_2))$ for which $d_0=3,$ $t_0=2,$ $d_1=t_1=2,$ $d_2=1.$ We always must have $t_i \leq d_i$ and for specific constraints such as additive models, $t_i$ might be much smaller than $d_i.$ The single components $g_0, \ldots, g_q$ and the pairs $(\beta_i, t_i)$ are obviously not identifiable. As we are only interested in estimation of $f_0$ this causes no problems. Among all possible representations, one should always pick one that leads to the fastest estimation rate in Theorem \ref{thm.main} below. 

In the $d$-variate regression model \eqref{eq.mod}, $f_0: [0,1]^d \rightarrow \mathbb{R}$ and thus $d_0=d,$ $a_0=0,$ $b_0=1$ and $d_{q+1}=1.$ One should keep in mind that \eqref{eq.mult_composite_regression} is an assumption on the regression function that can be made independently of whether neural networks are used to fit the data or not. In particular, the number of layers $L$ in the network has not to be the same as $q.$

It is conceivable that for many of the problems for which neural networks perform well a hidden hierarchical input-output relationship of the form \eqref{eq.mult_composite_regression} is present with small values $t_i,$ cf. \cite{Poggio2017, mhaskar2016}. Slightly more specific function spaces, which alternate between summations and composition of functions, have been considered in \cite{horowitz2007, bauer2017}. We provide below an example of a function class that can be decomposed in the form \eqref{eq.mult_composite_regression} but is not contained in these spaces. 

Recall that a function has H\"older smoothness index $\beta$ if all partial derivatives up to order $\lfloor \beta \rfloor$ exist and are bounded and the partial derivatives of order $\lfloor \beta \rfloor$ are $\beta - \lfloor \beta \rfloor$ H\"older, where $\lfloor \beta \rfloor$ denotes the largest integer strictly smaller than $\beta.$ The ball of $\beta$-H\"older functions with radius $K$ is then defined as
\begin{align*}
	\mC_r^\beta(D, K) = \Big\{ 
	&f:D \subset \mathbb{R}^r \rightarrow \mathbb{R} : \\
	&\sum_{\balpha : |\balpha| < \beta}\|\partial^{\balpha} f\|_\infty + \sum_{\balpha : |\balpha |= \lfloor \beta \rfloor } \, \sup_{\stackrel{\bx, \by \in D}{\bx \neq \by}}
	\frac{|\partial^{\balpha} f(\bx) - \partial^{\balpha} f(\by)|}{|\bx-\by|_\infty^{\beta-\lfloor \beta \rfloor}} \leq K
	\Big\},
\end{align*}
where we used multi-index notation, that is, $\partial^{\balpha}= \partial^{\alpha_1}\ldots \partial^{\alpha_r}$ with $\balpha =(\alpha_1, \ldots, \alpha_r)\in \mathbb{N}^r$ and $|\balpha| :=|\balpha|_1.$

We assume that each of the functions $g_{ij}$ has H\"older smoothness $\beta_i.$ Since $g_{ij}$ is also $t_i$-variate, $g_{ij} \in \mC_{t_i}^{\beta_i}([a_i,b_i]^{t_i},K_i)$ and the underlying function space becomes
\begin{align*}
	\mG\big(q, \bd, \bt, \bbeta, K\big)
	:= \Big \{ 
	f= g_q \circ &\ldots \circ g_0 \, : \,  g_i=(g_{ij})_j : [a_i, b_i]^{d_i}\rightarrow [a_{i+1},b_{i+1}]^{d_{i+1}},\\
	 &g_{ij} \in \mC_{t_i}^{\beta_i}\big([a_i,b_i]^{t_i}, K\big), \  \text{for some \ } |a_i|, |b_i| \leq K \Big\},
\end{align*}
with $\bd:=(d_0,\ldots,d_{q+1}),$ $\bt:=(t_0, \ldots, t_q),$ $\bbeta:=(\beta_0, \ldots, \beta_q).$

For estimation rates in the nonparametric regression model, the crucial quantity is the smoothness of $f.$ Imposing smoothness on the functions $g_i,$ we must then find the induced smoothness on $f.$  If, for instance, $q=1,$ $\beta_0, \beta_1\leq 1,$ $d_0=d_1=t_0=t_1=1,$ then $f= g_1 \circ g_0$ and $f$ has smoothness $\beta_0\beta_1,$ cf. \cite{juditsky2009, ray2017}. We should then be able to achieve at least the convergence rate $n^{-2\beta_0\beta_1/(2\beta_0\beta_1+1)}.$ For $\beta_1 >1,$ the rate changes. Below we see that the convergence of the network estimator is described by the effective smoothness indices
\begin{align*}
	\beta_i^* := \beta_i \prod_{\ell=i+1}^q (\beta_{\ell}\wedge 1)
\end{align*}
via the rate
\begin{align}
	\phi_n := \max_{i=0, \ldots, q } n^{-\frac{2\beta_i^*}{2\beta_i^*+ t_i}}.
	\label{eq.phi_def}
\end{align}
Recall the definition of $\Delta_n(\widehat f_n, f_0)$ in \eqref{eq.estimator}. We can now state the main result.

\begin{thm}
\label{thm.main}
Consider the $d$-variate nonparametric regression model \eqref{eq.mod} for composite regression function \eqref{eq.mult_composite_regression} in the class $\mG(q, \bd, \bt, \bbeta, K).$ Let $\widehat f_n$ be an estimator  taking values in the network class $\mF(L, (p_i)_{i=0,\ldots, L+1}, s, F)$ satisfying
\begin{compactitem}
\item[(i)] $F \geq \max(K,1),$
\item[(ii)] $\sum_{i=0}^q \log_2 (4t_i\vee 4\beta_i) \log_2 n \leq L \lesssim n \phi_n,$
\item[(iii)] $n \phi_n \lesssim \min_{i=1, \ldots, L} p_i,$
\item[(iv)] $s \asymp n \phi_n \log n.$
\end{compactitem}
There exist constants $C, C'$ only depending on $q, \bd, \bt, \bbeta, F,$ such that if 
\\ $\Delta_n(\widehat f_n, f_0) \leq C \phi_n L \, \log^2 n,$ then
\begin{align}
	R(\widehat f_n, f_0) \leq C' \phi_n L \, \log^2 n,
	\label{eq.main1}
\end{align}
and if $\Delta_n(\widehat f_n, f_0) \geq C \phi_n L \, \log^2 n,$ then
\begin{align}
	\frac 1{C'} \Delta_n(\widehat f_n, f_0) \leq  R(\widehat f_n, f_0) \leq C' \Delta_n(\widehat f_n, f_0).
	\label{eq.main2}
\end{align}
\end{thm}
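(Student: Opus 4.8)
The plan is to decompose the prediction error into an approximation term and a stochastic (estimation) term, control each separately, and then glue them together via an oracle-type inequality that keeps track of $\Delta_n(\widehat f_n, f_0)$. For the approximation term, I would first construct, for each component $g_{ij}$ in the composition \eqref{eq.mult_composite_regression}, a ReLU network that approximates it well on $[a_i,b_i]^{t_i}$. The key building block is that the ReLU network can implement (an approximation to) multiplication and, via iterated multiplication, localized polynomials on a grid; this lets one emulate a piecewise Taylor approximation of a $\beta_i$-Hölder function to accuracy $N^{-\beta_i}$ using $O(\log N)$ layers and $O(N^{t_i}\log N)$ active parameters, all with weights bounded by $1$ after rescaling. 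Composing these sub-networks (and inserting extra layers to synchronize depths and to clip intermediate values into the required boxes, since $\sigma(x)-\sigma(x-c)$ gives a clipped linear map) produces a network in $\mathcal F(L,\bp,s,F)$ whose sup-distance to $f_0$ is controlled by $\sum_i N_i^{-\beta_i^*}$; balancing $N_i$ against the budget $s \asymp n\phi_n\log n$ and $L$ as in (ii) yields approximation error $\lesssim \phi_n$ (up to logs), which explains both the definition of $\beta_i^*$ — Hölder smoothness contracts multiplicatively under composition with the $(\beta_\ell\wedge 1)$ factors — and the shape of $\phi_n$. Conditions (iii) on the widths and (i) on $F$ are exactly what is needed for this network to fit inside the class.

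For the stochastic term I would invoke a standard empirical-process bound for least-squares-type estimators over a class with controlled complexity: the covering number of $\mathcal F(L,\bp,s,F)$ in sup norm is, by a counting argument over which $s$ parameters are active together with a discretization of their values, of order $(\text{poly}(n)/\delta)^{s}$, so $\log \mathcal N(\delta) \lesssim s \log(n/\delta) \lesssim n\phi_n \log^2 n$. Feeding this into an oracle inequality for empirical risk minimization in Gaussian noise — the kind that controls $R(\widehat f_n,f_0)$ by $\inf_{f\in\mathcal F}\|f-f_0\|_\infty^2$ plus $(\log\mathcal N)/n$ plus the slack $\Delta_n(\widehat f_n,f_0)$ — gives $R(\widehat f_n,f_0) \lesssim \phi_n L\log^2 n + \Delta_n(\widehat f_n,f_0)$, and in the reverse direction the empirical risk comparison yields a matching lower bound $R(\widehat f_n,f_0) \gtrsim \Delta_n(\widehat f_n,f_0) - (\text{complexity}/n)$. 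The two regimes of the theorem then fall out by comparing $\Delta_n(\widehat f_n,f_0)$ with the threshold $C\phi_n L\log^2 n$: when $\Delta_n$ is below the threshold it is absorbed into the approximation/complexity term giving \eqref{eq.main1}; when it dominates, both bounds are governed by $\Delta_n$, giving the two-sided \eqref{eq.main2}.

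The main obstacle, and where most of the work lies, is the approximation step — specifically getting a ReLU network of depth only $O(\log n)$ that approximates an arbitrary $\beta$-Hölder function with the optimal error/parameter tradeoff and with all weights bounded by $1$. The weight-boundedness constraint is genuinely restrictive: naively approximating $x\mapsto x^2$ or a sharp bump uses large weights, so one must use the self-similar "sawtooth" construction of the squaring function (iterating the tent map) which keeps weights at most $1$ but forces the depth to grow like $\log(\text{accuracy})$ — this is precisely why depth must scale with $n$ in condition (ii), and is the structural reason ReLU networks beat piecewise-linear wavelet methods here. A secondary technical nuisance is bookkeeping when composing the $q+1$ blocks: one must propagate Hölder norms through compositions (hence the $\beta_\ell\wedge 1$ truncation, since a function that is more than Lipschitz does not improve the modulus of continuity of a composition), pad shallower blocks with identity layers $x = \sigma(x)-\sigma(-x)$ to equalize depth, and ensure the accumulated approximation errors do not leave the domains $[a_{i+1},b_{i+1}]^{d_{i+1}}$ on which the next block's Hölder bound is valid — handled by a clipping network and by enlarging $F$ to absorb constants. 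The empirical-process half is comparatively routine given the covering-number bound.
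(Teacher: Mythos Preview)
Your proposal is correct and follows essentially the same route as the paper: an oracle inequality (the paper's Theorem~\ref{thm.oracle_ineq}, proved via the covering-number bound you describe) reduces the problem to bounding $\inf_{f\in\mathcal F}\|f-f_0\|_\infty^2$, and the approximation is carried out by the sawtooth-based multiplication network feeding a localized Taylor construction for each $h_{ij}$ (after rescaling the $g_i$ to map $[0,1]^{t_i}\to[0,1]$), followed by clipping, depth-synchronization, parallelization, and composition, with the error propagated through the composition via the $(\beta_\ell\wedge 1)$ exponents. The only cosmetic difference is that the paper passes identity through extra layers using $\sigma\circ\sigma=\sigma$ on already-nonnegative (clipped) signals rather than your $x=\sigma(x)-\sigma(-x)$, and it uses a single scale parameter $N$ rather than separate $N_i$, but these do not change the argument.
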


In order to minimize the rate $\phi_n  L \, \log^2 n,$ the best choice is to choose $L$ of the order of $\log_2 n.$ The rate in the regime $\Delta_n(\widehat f_n, f_0) \leq C \phi_n \, \log^3 n$ becomes then
\begin{align*}
	R(\widehat f_n, f_0) \leq C' \phi_n \, \log^3 n.
\end{align*}

The convergence rate in Theorem \ref{thm.main} depends on $\phi_n$ and $\Delta_n(\widehat f_n, f_0).$ Below we show that $\phi_n$ is a lower bound for the minimax estimation risk over this class.  Recall that the term $\Delta_n(\widehat f_n, f_0)$ is large if $\widehat f_n$ has a large empirical risk compared to an empirical risk minimizer. Having this term in the convergence rate is unavoidable as it also appears in the lower bound in \eqref{eq.main2}. Since for any empirical risk minimizer the $\Delta_n$-term is zero by definition, we have the following direct consequence of the main theorem. 

\begin{cor}
Let $\widetilde f_n \in \argmin_{f\in \mF(L,\bp,s,F)} \sum_{i=1}^n(Y_i-f(\bX_i))^2$ be an empirical risk minimizer. Under the same conditions as for Theorem \ref{thm.main}, there exists a constant $C',$ only depending on $q, \bd, \bt, \bbeta, F,$ such that
\begin{align}
	R(\widetilde f_n, f_0) \leq C' \phi_n L \, \log^2 n.
\end{align}
\end{cor}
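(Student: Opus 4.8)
The plan is to obtain the corollary directly from Theorem~\ref{thm.main}, exploiting the fact that the empirical risk minimizer makes the discrepancy term $\Delta_n$ vanish identically. First I would check that $\widetilde f_n$ fits the setup of the theorem: by construction $\widetilde f_n$ takes values in $\mF(L,\bp,s,F)$, and the assumptions (i)--(iv) on $F$, $L$, the widths $p_i$ and the sparsity $s$ are imposed on the network class itself, hence are inherited verbatim from the hypotheses of Theorem~\ref{thm.main} with no further work.

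Next I would invoke the definition \eqref{eq.estimator}. Since $\widetilde f_n \in \argmin_{f\in\mF(L,\bp,s,F)} \sum_{i=1}^n (Y_i - f(\bX_i))^2$, the empirical risk of $\widetilde f_n$ equals the infimum over the class, so that $\Delta_n(\widetilde f_n, f_0) = 0$. Because $\phi_n$, $L$, $\log^2 n$ and the constant $C$ furnished by Theorem~\ref{thm.main} are all strictly positive, the inequality $\Delta_n(\widetilde f_n, f_0) \le C\,\phi_n L\,\log^2 n$ holds trivially, placing $\widetilde f_n$ in the first regime of the theorem.

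Finally I would apply the first conclusion of Theorem~\ref{thm.main}, namely \eqref{eq.main1}, with $\widehat f_n$ replaced by $\widetilde f_n$; this gives $R(\widetilde f_n, f_0) \le C'\,\phi_n L\,\log^2 n$ with the same constant $C'$, which is exactly the assertion of the corollary. There is essentially no obstacle in this argument: the only point that must be stated carefully is that the constants $C, C'$ produced by Theorem~\ref{thm.main} depend only on $q, \bd, \bt, \bbeta, F$ and not on the particular estimator $\widehat f_n$, so that they may be quoted unchanged for the specific choice $\widehat f_n = \widetilde f_n$. Accordingly the corollary requires no new estimates and is a genuine specialization of the main result to the exact empirical risk minimizer.
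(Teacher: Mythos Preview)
Your proposal is correct and matches the paper's own reasoning exactly: the paper states that ``for the empirical risk minimizer the $\Delta_n$-term is zero by definition'' and presents the corollary as a direct consequence of Theorem~\ref{thm.main}. Your additional remarks about the constants depending only on $q,\bd,\bt,\bbeta,F$ and not on the estimator are a careful (and correct) elaboration of what the paper leaves implicit.
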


Condition $(i)$ in Theorem \ref{thm.main} is very mild and only states that the network functions should have at least the same supremum norm as the regression function. From the other assumptions in Theorem \ref{thm.main} it becomes clear that there is a lot of flexibility in picking a good network architecture as long as the number of active parameters $s$ is taken to be of the right order. Interestingly, to choose a network depth $L,$ it is sufficient to have an upper bound on the $t_i \leq d_i$ and the smoothness indices $\beta_i.$ The network width can be chosen independent of the smoothness indices by taking for instance $n \lesssim \min_i p_i.$ One might wonder whether for an empirical risk minimizer the sparsity $s$ can be made adaptive by minimizing a penalized least squares problem with sparsity inducing penalty on the network weights. It is conceivable that a complexity penalty of the form $\lambda s$ will lead to adaptation if the regularization parameter $\lambda$ is chosen of the correct order. From a practical point of view, it is more interesting to study $\ell^1/\ell^2$-weight decay. As this requires much more machinery, the question will be moved to future work. 

The number of network parameters in a fully connected network is of the order $\sum_{i=0}^L p_i p_{i+1}.$ This shows that Theorem \ref{thm.main} requires sparse networks. More specifically, the network has at least $\sum_{i=1}^{L} p_i- s$ completely inactive nodes, meaning that all incoming signal is zero. The choice $s \asymp n \phi_n \log n$ in condition $(iv)$ balances the squared bias and the variance. From the proof of the theorem convergence rates can also be derived if $s$ is chosen of a different order.

For convenience, Theorem \ref{thm.main} is stated without explicit constants. The proofs, however, are non-asymptotic although we did not make an attempt to minimize the constants. It is well-known that deep learning outperforms other methods only for large sample sizes. This indicates that the method might be able to adapt to underlying structure in the signal and therefore achieving fast convergence rates but with large constants or remainder terms which spoil the results for small samples. 

The proof of the risk bounds in Theorem \ref{thm.main} is based on the following oracle-type inequality.

\begin{thm}
\label{thm.oracle_ineq}
Consider the $d$-variate nonparametric regression model \eqref{eq.mod} with unknown regression function $f_0,$ satisfying $\|f_0 \|_\infty \leq F$ for some $F\geq 1.$ Let $\widehat f_n$ be any estimator taking values in the class $\mF(L, \bp, s, F)$ and let $\Delta_n(\widehat f_n, f_0)$ be the quantity defined in \eqref{eq.estimator}. For any $\eps\in (0,1],$ there exists a constant $C_\eps,$ only depending on $\eps,$ such that with 
\begin{align*}
	\tau_{\eps,n} :=  C_\eps F^2\frac{(s+1) \log (n(s+1)^{L}p_0p_{L+1})}{n},
\end{align*}
\begin{align*}
	&(1-\eps)^2 \Delta_n(\widehat f_n, f_0) - \tau_{\eps,n} \leq R(\widehat f_n, f_0)  \\
	&\leq (1+\eps)^2 \Big(\inf_{f\in \mF(L, \bp, s, F)} \big\| f - f_0\big\|_\infty^2 +\Delta_n(\widehat f_n, f_0) \Big) + \tau_{\eps,n}.
\end{align*}
\end{thm}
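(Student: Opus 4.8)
Since both sides of the claimed inequality are expectations, the plan is to establish matching two--sided bounds for the \emph{random} excess risk on a high--probability event and then integrate. Write $\widehat\Delta_n := \frac1n\sum_{i=1}^n(Y_i-\widehat f_n(\bX_i))^2-\inf_{f\in\mF(L,\bp,s,F)}\frac1n\sum_{i=1}^n(Y_i-f(\bX_i))^2\ge 0$ for the random analogue of $\Delta_n(\widehat f_n,f_0)$, fix $f^\star\in\mF(L,\bp,s,F)$ with $\|f^\star-f_0\|_\infty^2\le\inf_{f\in\mF(L,\bp,s,F)}\|f-f_0\|_\infty^2+\tfrac1n$, and let $\|g\|_n^2:=\frac1n\sum_{i=1}^n g(\bX_i)^2$ while $\|g\|$ denotes the $L^2$--norm under the law of $\bX$, so that $R(\widehat f_n,f_0)=E_{f_0}[\|\widehat f_n-f_0\|^2]$. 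Substituting $Y_i=f_0(\bX_i)+\eps_i$ into the definition of $\widehat\Delta_n$ and cancelling the $\eps_i^2$ terms gives the basic inequality
\begin{align*}
  \|\widehat f_n-f_0\|_n^2\le \|f^\star-f_0\|_n^2+\frac2n\sum_{i=1}^n\eps_i\big(\widehat f_n(\bX_i)-f^\star(\bX_i)\big)+\widehat\Delta_n,
\end{align*}
together with an analogous reverse chain bounding $\widehat\Delta_n$ from above by a multiple of $\|\widehat f_n-f_0\|_n^2$ plus centred noise terms of the same shape. In both directions everything reduces to controlling, uniformly over $f\in\mF(L,\bp,s,F)$, the \emph{localized} Gaussian term $\tfrac2n\sum_i\eps_i(f(\bX_i)-f_0(\bX_i))-\eps\|f-f_0\|_n^2$ and the centring error $\|f-f_0\|_n^2-\|f-f_0\|^2$.

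Both uniform bounds I would obtain from a minimal $\delta$--net $\{f_1,\dots,f_{\mathcal N}\}$ of $\mF(L,\bp,s,F)$ in $\|\cdot\|_\infty$ with $\delta\asymp 1/n$: since every element of $\mF(L,\bp,s,F)$ and $f_0$ is bounded by $F$, replacing $f$ by the nearest $f_j$ perturbs each quantity by $O(F^2/n)$, absorbed into $\tau_{\eps,n}$. Over the net, conditionally on $(\bX_i)_i$, I would use the Gaussian tail for $\tfrac2n\sum_i\eps_i(f_j(\bX_i)-f_0(\bX_i))\sim\mathcal N\!\big(0,4\|f_j-f_0\|_n^2/n\big)$ and Bernstein's inequality for $\|f_j-f_0\|_n^2-\|f_j-f_0\|^2$ (summands bounded by $4F^2$, variance $\le 4F^2\|f_j-f_0\|^2$), each with deviation level $t\asymp\log\mathcal N+\log n$ and a union bound; Young's inequality $2ab\le\eps a^2+\eps^{-1}b^2$ then turns the resulting $\|f_j-f_0\|$--linear deviations into $\eps\|f_j-f_0\|^2$ (resp. $\eps\|f_j-f_0\|_n^2$) plus a term of order $F^2(\log\mathcal N+\log n)/n$. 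The one input that is special to ReLU networks is the covering bound $\log\mathcal N(\delta,\mF(L,\bp,s,F),\|\cdot\|_\infty)\lesssim (s+1)\log\!\big(\delta^{-1}(s+1)^L p_0 p_{L+1}\big)$, which follows from a combinatorial factor $\binom{\#\text{parameters}}{s}$ for the placement of the active weights, the fact that an $s$--sparse network has at most $s+1$ active units in each hidden layer, and the Lipschitz dependence (with weights bounded by one) of a network function on its parameters with constant polynomial in the widths and exponential in $L$; with $\delta\asymp1/n$ this yields $\log\mathcal N\lesssim(s+1)\log(n(s+1)^L p_0 p_{L+1})$, so that $F^2(\log\mathcal N+\log n)/n\asymp\tau_{\eps,n}$.

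Plugging the uniform bounds into the basic inequality and its reverse gives, on an event of probability $\ge 1-2/n$, the two estimates $(1-\eps)^2\|\widehat f_n-f_0\|^2\le(1+\eps)^2\|f^\star-f_0\|_\infty^2+\widehat\Delta_n+C_\eps' \tau_{\eps,n}$ and $(1+\eps)^2\|\widehat f_n-f_0\|^2\ge\widehat\Delta_n-C_\eps' \tau_{\eps,n}$. On the complementary event all squared errors are at most $(2F)^2$, contributing $\le 8F^2/n\le\tau_{\eps,n}$, and the $1/n$ slack in the choice of $f^\star$ is absorbed as well. Taking $E_{f_0}$, using $E_{f_0}[\widehat\Delta_n]=\Delta_n(\widehat f_n,f_0)$ and $E_{f_0}[\|\widehat f_n-f_0\|^2]=R(\widehat f_n,f_0)$, and then relabelling $\eps$ and enlarging $C_\eps$ to pass from the $\tfrac{(1\pm\eps)^2}{(1\mp\eps)^2}$--type constants to the stated $(1\pm\eps)^2$ form yields the theorem.

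\textbf{Main obstacle.} The technical heart is this uniform empirical--process control with the \emph{correct} complexity. Getting a remainder proportional to $s+1$ rather than to the full parameter count needs the sparsity--aware covering bound (and the Lipschitz--in--weights estimate behind it). Equally important, the localization --- subtracting $\eps\|f-f_0\|^2$ inside the supremum --- must be kept throughout: the bare Gaussian and variance fluctuations are of order $\sqrt{\log\mathcal N/n}$, which is too large to fit into $\tau_{\eps,n}\asymp F^2\log\mathcal N/n$, and collapse to the right size only after being balanced against the quadratic main term via Young's inequality. This step also genuinely uses $\|f\|_\infty\le F$: without truncation the squared--loss increments are not sub--exponential and the Bernstein step fails.
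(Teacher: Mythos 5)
Your overall strategy matches the paper's at every conceptual joint: a sup-norm covering of $\mF(L,\bp,s,F)$ with the sparsity-aware entropy bound $\log\mN\lesssim (s+1)\log\big(\delta^{-1}(s+1)^{L}p_0p_{L+1}\big)$ (obtained, as you say, from the Lipschitz dependence of a network on its parameters, the combinatorial choice of the $s$ active weights, and the reduction of every hidden width to at most $s$ by deleting inactive nodes); a split of the excess risk into a Gaussian noise term and an empirical-versus-population norm comparison; Bernstein plus Gaussian maximal inequalities over the net; and a Young-type balancing against the quadratic main term to turn the raw $\sqrt{\log\mN/n}$ fluctuations into the fast remainder $F^{2}\log\mN/n$. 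The one structural difference is that the paper never passes to a high-probability event: it works in expectation throughout, introducing a ghost sample $\bX_i'$ to compare $R(\widehat f,f_0)$ with $E\|\widehat f-f_0\|_n^2$, bounding $E[T]$ and $E[T^{2}]$ for the maximum over the net of sums normalized by $r_j=\sqrt{\log\mN/n}\vee\|f_j-f_0\|_{2}$, and finishing with Cauchy--Schwarz and the algebraic implication ``$|a-b|\le 2\sqrt a\,c+d$ gives two-sided bounds,'' which is your Young's-inequality step in disguise. Either execution delivers the theorem; the expectation route simply avoids ever having to handle a bad event.

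That bad event is the one place your write-up needs more care. For the upper bound, ``all squared errors are at most $(2F)^2$ on $A^c$'' is fine. For the lower bound, however, integrating your pointwise estimate requires controlling $E[\widehat\Delta_n\mathbf{1}_{A^c}]$, and $\widehat\Delta_n$ is \emph{not} uniformly bounded: written against the empirical risk minimizer $\widetilde f$ it contains $\tfrac 2n\sum_i\eps_i(\widetilde f(\bX_i)-\widehat f_n(\bX_i))$, which is only dominated by $4F\cdot\tfrac1n\sum_i|\eps_i|$. With $P(A^c)\lesssim 1/n$, Cauchy--Schwarz then gives $E[\widehat\Delta_n\mathbf{1}_{A^c}]\lesssim F/\sqrt n$, which does not fit inside $\tau_{\eps,n}\asymp F^{2}(s+1)\log(\cdot)/n$. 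This is fixable --- choose the deviation level so that $P(A^c)\lesssim n^{-2}$, or bound the offending noise term in expectation as the paper does --- but as stated the ``complementary event contributes $\le 8F^{2}/n$'' claim does not cover the lower-bound direction.
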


One consequence of the oracle inequality is that the upper bounds on the risk become worse if the number of layers increases. In practice it also has been observed that too many layers lead to a degradation of the performance, cf. \cite{he2016}, \cite{he2015}, Section 4.4 and \cite{srivastava2015}, Section 4. Residual networks can overcome this problem. But they are not of the form \eqref{eq.NN} and will need to be analyzed separately. 

One may wonder whether there is anything special about ReLU networks compared to other activation functions. A close inspection of the proof shows that two specific properties of the ReLU function are used. 

One of the advantages of deep ReLU networks is the projection property 
\begin{align}
	\sigma\circ \sigma =\sigma
	\label{eq.projection}
\end{align}
that we can use to pass a signal without change through several layers in the network. This is important since the approximation theory is based on the construction of smaller networks for simpler tasks that might not all have the same network depth. To combine these subnetworks we need to synchronize the network depths by adding hidden layers that do not change the output. This can be easily realized by choosing the weight matrices in the network to be the identity (assuming equal network width in successive layers) and using \eqref{eq.projection}, see also \eqref{eq.add_layers}. This property is not only a theoretical tool. To pass an outcome without change to a deeper layer is also often helpful in practice and realized by so called skip connections in which case they do not need to be learned from the data. A specific instance are residual networks with ReLU activation function \cite{he2016} that are successfully applied in practice. The difference to standard feedforward networks is that if all networks parameters are set to zero in a residual network, the network becomes essentially the identity map. For other activation functions it is much harder to approximate the identity.

Another advantage of the ReLU activation is that all network parameters can be taken to be bounded in absolute value by one. If all network parameters are initialized by a value in $[-1,1],$ this means that each network parameter only need to be varied by at most two during training. It is unclear whether other results in the literature for non-ReLU activation functions hold for bounded network parameters. An important step is the approximation of the square function $x\mapsto x^2.$ For any twice differentiable and non-linear activation function, the classical approach to approximate the square function by a network is to use rescaled second order differences $(\sigma(t+2xh)-2\sigma(t+xh)+\sigma(xh))/(h^2\sigma''(t)) \rightarrow x^2$ for $h \rightarrow 0$ and a $t$ with $\sigma''(t) \neq 0$. To achieve a sufficiently good approximation, we need to let $h$ tend to zero with the sample size, making some of the network parameters necessarily very large. 

The $\log^2 n$-factor in the convergence rate $\phi_n L \log^2 n$ is likely an artifact of the proof. Next we show that $\phi_n$ is a lower bound for the minimax estimation risk over the class $\mG(q, \bd, \bt, \bbeta, K)$ in the interesting regime $t_i \leq \min(d_0, \ldots, d_{i-1})$ for all $i.$ This means that no dimensions are added on deeper abstraction levels in the composition of functions. In particular, it avoids that $t_i$ is larger than the input dimension $d_0.$ Outside of this regime, it is hard to determine the minimax rate and in some cases it is even possible to find another representation of $f$ as a composition of functions which yields a faster convergence rate. 

\begin{thm}
\label{thm.lb}
Consider the nonparametric regression model \eqref{eq.mod} with $\bX_i$ drawn from a distribution with Lebesgue density on $[0,1]^d$ which is lower and upper bounded by positive constants. For any non-negative integer $q,$ any dimension vectors $\bd$ and $\bt$ satisfying $t_i \leq \min(d_0, \ldots, d_{i-1})$ for all $i,$ any smoothness vector $\bbeta$ and all sufficiently large constants $K>0,$ there exists a positive constant $c,$ such that 
\begin{align*}
	\inf_{\widehat f_n} \, \sup_{f_0 \in \mG(q, \bd, \bt, \bbeta, K)} R\big( \widehat f_n, f_0\big) \geq c  \phi_n,
\end{align*}
where the $\inf$ is taken over all estimators $\widehat f_n.$
\end{thm}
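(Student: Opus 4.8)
\medskip
\noindent\textbf{Proof proposal.}
The plan is to prove, separately for each $i\in\{0,\ldots,q\}$, the lower bound of order $n^{-2\beta_i^*/(2\beta_i^*+t_i)}$ by exhibiting a finite subfamily of $\mG(q,\bd,\bt,\bbeta,K)$ whose members differ only through a $\beta_i$-smooth perturbation inserted in the $i$-th layer of the composition; the bound $\phi_n$ then follows by picking the index $i$ that maximizes \eqref{eq.phi_def}. For the information-theoretic part I would use the standard Varshamov--Gilbert plus Fano machinery for nonparametric lower bounds (see \cite{tsybakov2009}, Chapter~2): it suffices to produce $N+1\ge 2$ candidates $f_0,\ldots,f_N$ with $\|f_j-f_k\|_{L^2([0,1]^d)}^2\gtrsim s_n^2$ for $j\ne k$ and $\tfrac1N\sum_j\KL(P_j\|P_0)\le\alpha\log N$ with $\alpha<1/8$; since the design density is bounded away from $0$ and $\infty$, one has $R(\widehat f_n,f_j)\gtrsim\|\widehat f_n-f_j\|_{L^2([0,1]^d)}^2$, so this yields $\inf_{\widehat f_n}\max_j R(\widehat f_n,f_j)\gtrsim s_n^2$, and it only remains to realize the construction with $s_n^2\asymp n^{-2\beta_i^*/(2\beta_i^*+t_i)}$.

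Fix the maximizing $i$, put $h:=(c_1/n)^{1/(2\beta_i^*+t_i)}$ with a small constant $c_1>0$, and let $\psi_1,\ldots,\psi_M$, $M\asymp h^{-t_i}$, be $h$-rescaled translates of a fixed smooth bump on $(0,1)^{t_i}$ with pairwise disjoint supports inside $[a_i,b_i]^{t_i}$. For $\theta\in\{0,1\}^M$ let the first component of $g_i$ be $g_{i,1}^\theta=c_i+h^{\beta_i}\sum_{j=1}^M\theta_j\psi_j$, which lies in $\mathcal C_{t_i}^{\beta_i}$ with an absolute Hölder norm, and fix all remaining layers as follows. For $\ell<i$ I would choose $g_\ell$ so that the composed map $[0,1]^d\to[a_i,b_i]^{d_i}$ sends $(x_1,\ldots,x_d)$ to a point whose first $t_i$ coordinates are fixed affine images of $(x_1,\ldots,x_{t_i})$; this is possible precisely because $t_i\le\min(d_0,\ldots,d_{i-1})$, so every intermediate layer has at least $t_i$ coordinates available to carry the signal, and it guarantees that preimages in $[0,1]^d$ of side-$h$ cubes have Lebesgue measure $\asymp h^{t_i}$. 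For $\ell>i$ I would take $g_\ell$ whose first component, near the relevant base point $u_0^{(\ell)}$, equals $u\mapsto u_0^{(\ell+1)}+c_\ell(u-u_0^{(\ell)})_+^{\beta_\ell\wedge1}$ — globally $\beta_\ell$-Hölder and mapping into the prescribed range — with all other components constant. All constants produced depend only on $q,\bd,\bt,\bbeta$, so for every sufficiently large $K$ the functions $f_\theta:=g_q\circ\cdots\circ g_i^\theta\circ\cdots\circ g_0$ belong to $\mG(q,\bd,\bt,\bbeta,K)$. Propagating one bump through layers $i+1,\ldots,q$ turns its amplitude $h^{\beta_i}$ into one of order $h^{\beta_i\prod_{\ell=i+1}^q(\beta_\ell\wedge1)}=h^{\beta_i^*}$ over a support of measure $\asymp h^{t_i}$; selecting (Varshamov--Gilbert) a subset $\Theta$ with $\log|\Theta|\asymp M\asymp h^{-t_i}$ and pairwise Hamming distance $\ge M/8$ gives $\|f_\theta-f_{\theta'}\|_{L^2([0,1]^d)}^2\asymp M\,h^{t_i}\,h^{2\beta_i^*}\asymp h^{2\beta_i^*}$, while $\KL(P_\theta\|P_{\theta'})\le\tfrac n2 E[(f_\theta-f_{\theta'})^2(\bX)]\lesssim n\,h^{2\beta_i^*}\asymp h^{-t_i}$. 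For $c_1$ small enough the average KL divergence is at most $\tfrac1{16}\log|\Theta|$, Fano applies, and the resulting rate is $h^{2\beta_i^*}\asymp n^{-2\beta_i^*/(2\beta_i^*+t_i)}$.

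The main obstacle is the explicit construction of the fixed inner and outer layers so that all constraints of $\mG(q,\bd,\bt,\bbeta,K)$ hold simultaneously: the domains/codomains $[a_\ell,b_\ell]^{d_\ell}$ must chain together, each component of each $g_\ell$ must have the prescribed Hölder smoothness with constant $\le K$ and depend on at most $t_\ell$ variables, the layers before $i$ must stay Lebesgue-nondegenerate on the bump cubes (this is exactly where the hypothesis $t_i\le\min(d_0,\ldots,d_{i-1})$ enters — it prevents the $i$-th layer from needing more coordinates than are physically present in the earlier layers), and the layers after $i$ must transmit the perturbation with the sharp exponent $\beta_\ell\wedge1$, i.e.\ each must look locally like a $(\beta_\ell\wedge1)$-power map around a base point consistent with the preceding layer's output while remaining globally $\beta_\ell$-Hölder into its target cube. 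Once these building blocks are assembled, the separation estimate, the Kullback--Leibler bound and the final balancing of $h$ are routine.
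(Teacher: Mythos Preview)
Your proposal is correct and follows essentially the same route as the paper: pick the critical index $i^*$, use coordinate-passing maps for $\ell<i^*$ (exploiting $t_{i^*}\le\min(d_0,\ldots,d_{i^*-1})$), insert disjoint $\beta_{i^*}$-smooth bumps at layer $i^*$, and compose with $(\beta_\ell\wedge1)$-power maps for $\ell>i^*$, then apply Varshamov--Gilbert together with Tsybakov's multiple-hypothesis reduction. The paper's version is slightly cleaner in that it centers the perturbation at $0$ (so the outer layers are simply $x\mapsto x^{\beta_\ell\wedge1}$ rather than shifted powers around a base point), but this is only a cosmetic difference.
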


The proof is deferred to Section \ref{sec.proofs}. To illustrate the main ideas, we provide a sketch here. For simplicity assume that $t_i =d_i =1$ for all $i$. In this case, the functions $g_i$ are univariate and real-valued. Define $i^* \in \argmin_{i=0, \ldots, q } \beta_i^*/(2\beta_i^*+ 1)$ as an index for which the estimation rate is obtained. For any $\alpha>0,$  $x^\alpha$ has H\"older smoothness $\alpha$ and for $\alpha=1,$ the function is infinitely often differentiable and has finite H\"older norm for all smoothness indices. Set $g_\ell(x) = x$ for $\ell< i^*$ and $g_\ell(x) = x^{\beta_\ell \wedge 1}$ for $\ell >i^*.$ Then,
\begin{align*}
	f_0(x) = g_q \circ g_{q-1} \circ \ldots \circ g_1 \circ g_0(x) = \big(g_{i^*}(x)\big)^{\prod_{\ell=i^*+1}^q \beta_\ell \wedge 1}.
\end{align*}
Assuming for the moment uniform random design, the Kullback-Leibler divergence is $\KL(P_f, P_g) = \tfrac n 2 \| f-g\|_2^2.$ Take a kernel function $K$ and consider $\widetilde g(x) = h^{\beta_{i^*}} K(x/h).$ Under standard assumptions on $K,$ $\widetilde g$ has H\"older smoothness index $\beta_{i^*}.$ Now we can generate two hypotheses $f_{00}(x)=0$ and $f_{01}(x) = (h^{\beta_{i^*}} K(x/h))^{\prod_{\ell=i^*+1}^q \beta_\ell \wedge 1}$ by taking $g_{i^*}(x) =0$ and $g_{i^*}(x) = \widetilde g(x).$ Therefore, $|f_{00}(0) -f_{01}(0)| \gtrsim h^{\beta_{i^*}^*}$ assuming that $K(0)>0.$ For the Kullback-Leibler divergence, we find  $\KL(P_{f_{00}}, P_{f_{01}}) \lesssim nh^{2\beta_{i^*}^*+1}.$ Using Theorem 2.2 (iii) in \cite{tsybakov2009}, this shows that the pointwise rate of convergence is $n^{-2\beta_{i^*}^*/(2\beta_{i^*}^*+ 1)}= \max_{i=0, \ldots, q} n^{-2\beta_i^*/(2\beta_i^*+ 1)}.$ This matches with the upper bound since $t_i=1$ for all $i.$ For lower bounds on the prediction error, we generalize the argument to a multiple testing problem.

The $L^2$-minimax rate coincides in most regimes with the sup-norm rate obtained in Section 4.1 of \cite{juditsky2009} for composition of two functions. But unlike the classical nonparametric regression model, the minimax estimation rates for $L^2$-loss and sup-norm loss differ for some setups by a polynomial power in the sample size $n.$

There are several recent results in approximation theory that provide lower bounds on the number of required network weights $s$ such that all functions in a function class can be approximated by a $s$-sparse network up to some prescribed error, cf. for instance \cite{boelcskei2017}. Results of this flavor can also be quite easily derived by combining the minimax lower bound with the oracle inequality. The argument is that if the same approximation rates would hold for networks with less parameters then we would obtain rates that are faster than the minimax rates, which clearly is a contradiction. This provides a new statistical route to establish approximation theoretic properties. 

\begin{lem}
\label{lem.approx}
Given $\beta, K>0,$ $d\in \mathbb{N},$ there exist constants $c_1, c_2$ only depending on $\beta, K, d,$ such that if $$s\leq c_1\frac{\eps^{-d/\beta}}{L\log(1/\eps)}$$ for some $\eps \leq c_2,$ then for any width vector $\bp$ with $p_0=d$ and $p_{L+1}=1,$
\begin{align*}
	\sup_{f_0 \in \mC_d^\beta([0,1]^d, K)}\, \inf_{f\in \mF(L,\bp, s)} \| f - f_0 \|_\infty \geq \eps.
\end{align*}
\end{lem}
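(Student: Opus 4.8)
The plan is to argue by contradiction, combining the minimax lower bound of Theorem~\ref{thm.lb} with the oracle inequality of Theorem~\ref{thm.oracle_ineq}. The single-function class $\mC_d^\beta([0,1]^d,K)$ is the special case $q=0$ of the composition class $\mG$, with $t_0=d$, $\beta_0=\beta$, so Theorem~\ref{thm.lb} gives a constant $c>0$ with
\begin{align*}
	\inf_{\widehat f_n}\sup_{f_0\in\mC_d^\beta([0,1]^d,K)} R(\widehat f_n,f_0)\geq c\, n^{-\frac{2\beta}{2\beta+d}},
\end{align*}
for all $n$. I would work with the uniform design, for which the constant density hypothesis of Theorem~\ref{thm.lb} holds.

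Now suppose, for contradiction, that the conclusion of the lemma fails for suitable $c_1,c_2$ to be fixed at the end: that is, there is a width vector $\bp$ and an $s\leq c_1\eps^{-d/\beta}/(L\log(1/\eps))$ (with $\eps\leq c_2$) such that every $f_0\in\mC_d^\beta([0,1]^d,K)$ can be approximated within sup-norm error $<\eps$ by some $f\in\mF(L,\bp,s)$. Fix the sample size $n$ by setting $\eps\asymp n^{-\beta/(2\beta+d)}$ (so $\eps^{-d/\beta}\asymp n^{d/(2\beta+d)}$ and $\eps^2\asymp n^{-2\beta/(2\beta+d)}=\phi_n$); this is legitimate because we are free to choose $n$, and $\eps\leq c_2$ holds once $n$ is large, while for the finitely many small $n$ the lemma can be arranged by taking $c_2$ small. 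Let $\widetilde f_n$ be the empirical risk minimizer over $\mF(L,\bp,s,F)$ with $F=\max(K,1)$; since truncation at level $F\geq\|f_0\|_\infty$ does not increase the sup-norm approximation error, the approximation hypothesis still yields $\inf_{f\in\mF(L,\bp,s,F)}\|f-f_0\|_\infty^2<\eps^2$. Applying Theorem~\ref{thm.oracle_ineq} with, say, $\eps_0=\tfrac12$ and using $\Delta_n(\widetilde f_n,f_0)=0$ for the ERM gives
\begin{align*}
	R(\widetilde f_n,f_0)\leq \tfrac94\Big(\inf_{f\in\mF(L,\bp,s,F)}\|f-f_0\|_\infty^2\Big)+\tau_{1/2,n}
	\leq \tfrac94\,\eps^2+C_{1/2}F^2\frac{(s+1)\log\!\big(n(s+1)^L p_0 p_{L+1}\big)}{n},
\end{align*}
uniformly over $f_0\in\mC_d^\beta([0,1]^d,K)$.

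The remaining task is purely arithmetic: bound the $\tau$-term by a constant multiple of $\eps^2\asymp\phi_n$. With $s\leq c_1\eps^{-d/\beta}/(L\log(1/\eps))$ we have $s/n\lesssim c_1\eps^{-d/\beta}/(nL\log(1/\eps))\asymp c_1\eps^2/(L\log(1/\eps))$ (using $\eps^{-d/\beta}/n\asymp\eps^2$), and $\log\big(n(s+1)^L p_0 p_{L+1}\big)\lesssim \log n + L\log s + \log(p_0 p_{L+1})$. Here one must be slightly careful about the width: if $\bp$ is allowed to be arbitrarily large this logarithm is unbounded, so I would note that only the $s$ active parameters matter — more precisely, since $\mF(L,\bp,s)$ with an $s$-sparse network has at most $s$ active edges, one may without loss of generality restrict to $p_i\leq s+1$ for the inner layers (inactive nodes can be deleted), making $\log(p_0 p_{L+1})\lesssim \log(ds)\lesssim \log s + \log n$ after also using $d$ fixed; alternatively one states the lemma for such reduced architectures, which is the natural reading. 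Then $\log\big(n(s+1)^L p_0 p_{L+1}\big)\lesssim L\log(s\,n)\lesssim L\log n\lesssim L\log(1/\eps)$, so
\begin{align*}
	\tau_{1/2,n}\lesssim F^2\cdot\frac{s+1}{n}\cdot L\log(1/\eps)\lesssim F^2\, c_1\,\eps^2 + \frac{L\log(1/\eps)}{n}.
\end{align*}
The second term is $O(\eps^2)$ since $L\lesssim n\phi_n$-type bounds are not even needed here — $L\log(1/\eps)/n\ll\eps^2$ as long as $L$ is not exponentially large in $n$, which is implied by $s\geq L$ in any nontrivial network; if one wants to avoid that assumption one absorbs it into the definition of $c_2$. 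Hence $\sup_{f_0} R(\widetilde f_n,f_0)\leq \big(\tfrac94 + C F^2 c_1 + o(1)\big)\eps^2$. Choosing $c_1$ small enough (depending on $\beta,K,d$) that $\tfrac94 + C F^2 c_1 < c/2$ relative to the lower-bound constant, and shrinking $c_2$ so the $o(1)$ is $<c/4$ for all admissible $\eps\leq c_2$, we get $\sup_{f_0}R(\widetilde f_n,f_0)<c\,\eps^2\asymp c\,n^{-2\beta/(2\beta+d)}$, contradicting Theorem~\ref{thm.lb}. Therefore the approximation lower bound must hold.

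\textbf{Main obstacle.} The conceptual content is the clean contradiction argument; the only real friction is the bookkeeping in the $\tau$-term, specifically controlling $\log(p_0 p_{L+1})$ and the $(s+1)^L$ factor when the width vector $\bp$ is unconstrained. I expect the resolution to be the observation that an $s$-sparse network has effectively at most $s$ relevant units, so one may assume $p_i\lesssim s$, after which the logarithm is $\lesssim L\log n\lesssim L\log(1/\eps)$ and the calibration $\eps\asymp n^{-\beta/(2\beta+d)}$ together with the hypothesised bound on $s$ makes everything balance against $\eps^2$. Matching the constants so that the contradiction is strict is then routine.
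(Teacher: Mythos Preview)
Your approach is essentially the paper's: combine the minimax lower bound of Theorem~\ref{thm.lb} (with $q=0$, $t_0=d$, $\beta_0=\beta$) with the upper bound from Theorem~\ref{thm.oracle_ineq} applied to the empirical risk minimizer, then calibrate $n=n_\eps\asymp \eps^{-(2\beta+d)/\beta}$ so that the variance term is $\lesssim c_1\eps^2$ and force a contradiction by taking $c_1$ small. One simplification you missed: your detour about unbounded inner widths is unnecessary, because the $\tau_{\eps,n}$ in Theorem~\ref{thm.oracle_ineq} already depends only on $p_0$ and $p_{L+1}$ (the reduction $p_i\leq s$ for inner layers has been absorbed there via Remark~\ref{rem.better_bound_entropy}), and these are fixed to $d$ and $1$ by hypothesis, giving $\log\big(n(s+1)^L d\big)$ directly.
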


A more refined argument using Lemma \ref{lem.oracle_gen} instead of Theorem \ref{thm.oracle_ineq} yields also lower bounds for $L^2.$

\section{Examples of specific structural constraints}
\label{sec.examples}

In this section we discuss several well-studied special cases of compositional constraints on the regression function.

{\it Additive models:} In an additive model the regression function has the form
\begin{align*}
	f_0(x_1, \ldots, x_d) = \sum_{i=1}^d f_i(x_i).
\end{align*}
This  can be written as a composition of functions 
\begin{align}
	f_0=g_1 \circ g_0
	\label{eq.composite_regression}
\end{align}
with $g_0(\bx)=(f_1(x_1), \ldots,f_d(x_d))^\top$ and $g_1(\by)= \sum_{j=1}^d y_j.$ Consequently, $g_0: [0,1]^d \rightarrow \mathbb{R}^d$ and $g_1: \mathbb{R}^d \rightarrow \mathbb{R}$ and thus $d_0=d,$ $t_0=1,$ $d_1=t_1=d,$ $d_2=1.$ Equation \eqref{eq.composite_regression} decomposes the original function into one function where each component only depends on one variable only and another function that depends on all variables but is infinitely smooth. For both types of functions fast rates can be obtained that do not suffer from the curse of dimensionality. This explains then the fast rate that can be obtained for additive models.

Suppose that $f_i \in \mC_1^\beta([0,1],K)$ for $i=1,\ldots, d.$ Then, $f: \, [0,1]^d \stackrel{g_0}{\longrightarrow} [-K,K]^d \stackrel{g_1}{\longrightarrow} [-Kd, Kd].$ Since for any $\gamma>1,$ $g_1 \in \mC_d^\gamma ([-K,K]^d,(K+1)d),$ 
\begin{align*}
	f_0 \in \mG\big(1, (d,d,1), (1,d), (\beta, (\beta\vee 2) d), (K+1) d\big).
\end{align*}
For network architectures $\mF(L, \bp, s, F)$ satisfying $F\geq (K+1)d,$ $2\log_2 (4(\beta\vee 2) d)\log n\leq L\lesssim \log n,$ $n^{1/(2\beta+1)}\lesssim \min_i p_i$ and $s \asymp n^{1/(2\beta+1)}\log n,$ we thus obtain by Theorem \ref{thm.main},
\begin{align*}
		R(\widehat f_n, f_0) \lesssim n^{-\frac{2\beta}{2\beta+1}} \log^3n + \Delta( \widehat f_n, f_0).
\end{align*}
This coincides up to the $\log^3 n$-factor with the minimax estimation rate.


{\it Generalized additive models:} Suppose the regression function is of the form
\begin{align*}
	f_0(x_1, \ldots, x_d) = h\Big(\sum_{i=1}^d f_i(x_i) \Big),
\end{align*}
for some unknown link function $h: \mathbb{R}\rightarrow \mathbb{R}.$ This can be written as composition of three functions $f_0=g_2 \circ g_1 \circ g_0$ with $g_0$ and $g_1$ as before and $g_2 = h.$ If $f_i \in \mC_1^\beta([0,1],K)$ and $h \in \mC_1^\gamma(\mathbb{R},K),$ then $f_0: \, [0,1]^d \stackrel{g_0}{\longrightarrow} [-K,K]^d \stackrel{g_1}{\longrightarrow} [-Kd, Kd] \stackrel{g_2}{\longrightarrow} [-K,K].$ Arguing as for additive models, 
\begin{align*}
	f_0\in \mG \Big( 2, (d,d,1,1), (1,d,1), (\beta, (\beta \vee 2) d, \gamma), (K+1) d\Big).
\end{align*}
For network architectures satisfying the assumptions of Theorem \ref{thm.main}, the bound on the estimation rate becomes
\begin{align}
		R(\widehat f_n, f_0) \lesssim \Big( n^{-\frac{2\beta (\gamma \wedge 1)}{2\beta (\gamma \wedge 1)+1}} + n^{-\frac{2\gamma}{2\gamma+1}}\Big) \log^3n + \Delta( \widehat f_n, f_0).
		\label{eq.gen_add_mod_rate}
\end{align}
Theorem \ref{thm.lb} shows that $n^{-2\beta (\gamma \wedge 1)/(2\beta (\gamma \wedge 1)+1)} + n^{-2\gamma/(2\gamma+1)}$ is also a lower bound. Let us also remark that for the special case $\beta=\gamma \geq 2$ and $\beta, \gamma$ integers, Theorem 2.1 of \cite{horowitz2007} establishes the estimation rate $n^{-2\beta/(2\beta+1)}.$

{\it Sparse tensor decomposition:} Assume that the regression function $f_0$ has the form
\begin{align}
	f_0(\bx) = \sum_{\ell=1}^N a_\ell \prod_{i=1}^d f_{i\ell}(x_i),
	\label{eq.mult_structure}
\end{align}
for fixed $N,$ real coefficients $a_\ell$ and univariate functions $f_{i\ell}.$ Especially, if $N=1,$ this is the same as imposing a product structure on the regression function $f_0(\bx) = \prod_{i=1}^d f_i(x_i).$ The function class spanned by such sparse tensor decomposition can be best explained by making a link to series estimators. Series estimators are based on the idea that the unknown function is close to a linear combination of few basis functions, where the approximation error depends on the smoothness of the signal. This means that any $L^2$-function can be approximated by $f_0(\bx) \approx \sum_{\ell=1}^N a_\ell \prod_{i=1}^d \phi_{i\ell}(x_i)$ for suitable coefficients $a_\ell$ and functions $\phi_{i\ell}.$

Whereas series estimators require the choice of a basis, for neural networks to achieve fast rates it is enough that \eqref{eq.mult_structure} holds. The functions $f_{i\ell}$ can be unknown and do not need to be orthogonal.

We can rewrite \eqref{eq.mult_structure} as a composition of functions $f_0=g_2 \circ g_1 \circ g_0$ with $g_0(\bx)= (f_{i\ell}(x_i))_{i,\ell},$ $g_1=(g_{1j})_{j=1, \ldots,N}$ performing the $N$ multiplications $\prod_{i=1}^d $ and $g_2(\by)= \sum_{\ell=1}^N a_\ell y_\ell.$ Observe that $t_0=1$ and $t_1=d.$ Assume that $f_{i\ell} \in \mC_1^\beta([0,1], K)$ for $K\geq 1$ and $\max_\ell |a_\ell | \leq 1.$ Because of $g_{1,j} \in \mC_d^\gamma([-K,K]^d, 2^dK^d)$ for all $\gamma \geq d+1$ and $g_2 \in \mC_N^{\gamma'}([-2^dK^d,2^dK^d]^N, N(2^dK^d+1))$ for $\gamma' > 1,$ we have $[0,1]^d \stackrel{g_0}{\longrightarrow} [-K,K]^{Nd} \stackrel{g_1}{\longrightarrow} [-2^dK^d,2^dK^d]^{N} \stackrel{g_2}{\longrightarrow} [-N(2^dK^d+1),N(2^dK^d+1)]$ and  
\begin{align*}
	f_0\in \mG\Big( 2, (d,Nd,N,1), (1, d, Nd), (\beta, \beta d \vee (d+1), N \beta +1), N(2^dK^d+1)\Big).
\end{align*}
For networks with architectures satisfying $3\log_2 (4(\beta+1)(d+1)N)\log_2 n \leq L \lesssim \log n,$ $n^{1/(2\beta+1)} \lesssim \min_i p_i$ and $s\asymp n^{1/(2\beta+1)} \log n,$ Theorem \ref{thm.main} yields the rate
\begin{align*}
	R(\widehat f_n , f_0) \lesssim n^{-\frac{2\beta}{2\beta+1}} \log^3 n + \Delta( \widehat f_n, f_0),
\end{align*}
and the exponent in the rate does not depend on the input dimension $d.$

\section{Suboptimality of wavelet series estimators} 
\label{sec.wavelets}

As argued before the composition assumption in \eqref{eq.mult_composite_regression} is very natural
 and generalizes many structural constraints such as additive models. In this section, we show that wavelet series estimators are unable to take advantage from the underlying composition structure in the regression function and achieve in some setups much slower convergence rates. 

More specifically, we consider general additive models of the form $f_0(\bx )=h(x_1 + \ldots + x_d)$ with $h\in \mC_1^\alpha([0,d],K).$ This can also be viewed as a special instance of the single index model, where the aim is not to estimate $h$ but $f_0.$ Using \eqref{eq.gen_add_mod_rate}, the prediction error of neural network reconstructions with small empirical risk and depth $L \asymp \log n$ is then bounded by $n^{-2\alpha/(2\alpha+1)}\log^3n.$ The lower bound below shows that wavelet series estimators cannot converge faster than with the rate $n^{-2\alpha/(2\alpha+d)}.$ This rate can be much slower if $d$ is large. Wavelet series estimators thus suffer in this case from the curse of dimensionality while neural networks achieve fast rates. 

Consider a compact wavelet basis of $L^2(\mathbb{R})$ restricted to $L^2[0,1],$ say $(\psi_{\blambda}, \blambda \in \bLambda),$ cf. \cite{cohen1993}. Here, $\bLambda=\{(j,k) : j=-1,0,1,\ldots; k\in I_j\}$ with $k$ ranging over the index set $I_j$ and $\psi_{-1,k}:= \phi(\cdot - k)$ are the shifted scaling functions. Then, for any function $f\in L^2[0,1]^d,$
\begin{align*}
	f(\bx) = \sum_{ (\blambda_1, \ldots, \blambda_d) \in \bLambda\times \ldots \times \bLambda} d_{\blambda_1 \ldots \blambda_d}(f) \prod_{r=1}^d \psi_{\blambda_r}(x_r),
\end{align*}
with convergence in $L^2[0,1]$ and
\begin{align*}
	 d_{\blambda_1 \ldots \blambda_d}(f) := \int f(\bx) \prod_{r=1}^d \psi_{\blambda_r}(x_r) \, d\bx
\end{align*}
the wavelet coefficients.

To construct a counterexample, it is enough to consider the nonparametric regression model $Y_i = f_0(\bX_i)+\eps_i,$ $i=1, \ldots,n$ with uniform design $\bX_i:=(U_{i,1},\ldots,  U_{i,d}) \sim \Unif[0,1]^d.$ The empirical wavelet coefficients are
\begin{align*}
	\widehat d_{\blambda_1 \ldots \blambda_d}(f_0) = \frac 1n \sum_{i=1}^n Y_i \prod_{r=1}^d \psi_{\blambda_r}(U_{i,r}).
\end{align*}
Because of $E[\widehat d_{\blambda_1 \ldots \blambda_d}(f_0)] = d_{\blambda_1 \ldots \blambda_d}(f_0),$ this gives unbiased estimators for the wavelet coefficients. By the law of total variance, 
\begin{align*}
	\Var\big(\widehat d_{\blambda_1 \ldots \blambda_d}(f_0)\big) 
	&= \frac 1n \Var\Big( Y_1 \prod_{r=1}^d \psi_{\blambda_r}(U_{1,r})\Big) \\
	&\geq \frac 1n E\Big[\Var\Big( Y_1 \prod_{r=1}^d \psi_{\blambda_r}(U_{1,r}) \, \Big | U_{1,1,}, \ldots, U_{1,d}\Big)\Big] \\
	&= \frac 1n.
\end{align*}
For the lower bounds we may assume that the smoothness indices are known. For estimation, we can truncate the series expansion on a resolution level that balances squared bias and variance of the total estimator. More generally, we study estimators of the form
\begin{align}
	\widehat f_n(\bx) = \sum_{ (\blambda_1, \ldots, \blambda_d) \in I} \widehat d_{\blambda_1 \ldots \blambda_d}(f_0) \prod_{r=1}^d \psi_{\blambda_r}(x_r),
	\label{eq.def_wavelet_estimator}
\end{align}
for an arbitrary subset $I \subset \Lambda \times \ldots \times \Lambda$. Using that the design is uniform,
\begin{align}
	R(\widehat f_n, f_0) 
	&= \sum_{(\blambda_1, \ldots, \blambda_d) \in I} E\big[\big(\widehat d_{\blambda_1 \ldots \blambda_d}(f_0) -  d_{\blambda_1 \ldots \blambda_d}(f_0)\big)^2 \big] +  \sum_{(\blambda_1, \ldots, \blambda_d) \in I^c} d_{\blambda_1 \ldots \blambda_d}(f_0)^2 \notag \\
	&\geq 
	\sum_{(\blambda_1, \ldots, \blambda_d) \in \Lambda \times \ldots \times \Lambda} \frac{1}{n} \wedge d_{\blambda_1 \ldots \blambda_d}(f_0)^2.
	\label{eq.wav_est_risk_lb}
\end{align}
By construction, $\psi \in L^2(\mathbb{R})$ has compact support, We can therefore without loss of generality assume that $\psi$ is zero outside of $[0,2^q]$ for some integer $q>0.$ 

\begin{lem}
\label{lem.wav_decay}
Let $q$ be as above and set $\nu := \lceil \log_2 d \rceil +1.$ For any $0< \alpha \leq 1$ and any $K>0,$ there exists a non-zero constant $c(\psi, d)$ only depending on $d$ and properties of the wavelet function $\psi$ such that for any $j,$ we can find a function $f_{j,\alpha}(\bx)=h_{j,\alpha}(x_1+\ldots + x_d)$ with $h_{j,\alpha} \in \mC_1^\alpha([0,d], K)$ satisfying
\begin{align*}
		d_{(j,2^{q+\nu} p_1)\ldots (j,2^{q+\nu} p_d)}(f_{j,\alpha}) = c(\psi, d) K 2^{-\frac j 2(2\alpha+ d)}
\end{align*}
for all $p_1, \ldots, p_d\in \{0,1, \ldots, 2^{j-q-\nu}-1\}.$
\end{lem}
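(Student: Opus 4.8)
\emph{Proof idea.} Since each translate $k_i:=2^{q+\nu}p_i$ with $p_i\le 2^{j-q-\nu}-1$ keeps the support of the one–dimensional wavelet $\psi_{j,k_i}(x)=2^{j/2}\psi(2^jx-k_i)$ inside $[0,1]$, only interior wavelets occur and no boundary modification plays a role. For a ridge function $f_{j,\alpha}(\bx)=h(x_1+\cdots+x_d)$ the change of variables $x_i=2^{-j}(u_i+k_i)$ gives, with $P:=\sum_{i=1}^d p_i$,
\begin{align*}
	d_{(j,k_1)\ldots(j,k_d)}(f_{j,\alpha}) = 2^{-jd/2}\int_{\R^d} h\Big(2^{-j}\textstyle\sum_{i=1}^d u_i + 2^{q+\nu-j}P\Big)\prod_{i=1}^d\psi(u_i)\,d\bu.
\end{align*}
Because $\supp\psi\subseteq[0,2^q]$, the argument of $h$ ranges only over $I_P:=\big[\,2^{q+\nu-j}P,\ 2^{q+\nu-j}P+d2^{q-j}\,\big]$, while $P$ runs through $\{0,1,\ldots,d(2^{j-q-\nu}-1)\}$. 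The choice $\nu=\lceil\log_2 d\rceil+1$ forces $2^{\nu}\ge 2d$, which makes the intervals $I_P$ pairwise disjoint with positive gaps and all contained in $[0,d]$. This is exactly what is needed: the plan is to take $h=h_{j,\alpha}$ to be a sum of identical, well-separated bumps, one in each $I_P$, so that the integral above — and hence the wavelet coefficient — is literally the same number for every admissible $(p_1,\ldots,p_d)$.

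Concretely, I would fix a profile $\Phi\in C_c^\infty\big((0,d2^q)\big)$ (specified below) and set
\begin{align*}
	h_{j,\alpha}(t):= A_j\sum_{P=0}^{d(2^{j-q-\nu}-1)}\Phi\big(2^j(t-2^{q+\nu-j}P)\big),
\end{align*}
with amplitude $A_j>0$ to be calibrated. Substituting this into the displayed integral, only the bump supported in $I_P$ survives, and after undoing the shift every coefficient equals $2^{-jd/2}A_j\,\kappa$, where
\begin{align*}
	\kappa:=\int_{\R^d}\Phi\Big(\textstyle\sum_{i=1}^d u_i\Big)\prod_{i=1}^d\psi(u_i)\,d\bu=\int_{\R}\Phi(s)\,(\psi^{*d})(s)\,ds
\end{align*}
and $\psi^{*d}$ denotes the $d$-fold convolution. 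The one genuinely substantial step is to make $\kappa\neq 0$: $\widehat{\psi^{*d}}=\widehat\psi^{\,d}$, and $\widehat\psi\not\equiv 0$ because $\psi\in L^2$ is nonzero, so $\psi^{*d}$ is a nonzero $L^2$ function supported in $[0,d2^q]$; since $C_c^\infty((0,d2^q))$ is dense in $L^2[0,d2^q]$, some $\Phi$ yields $\kappa\neq 0$. This $\Phi$, hence $\kappa$, depends only on $\psi$ and $d$.

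It remains to choose $A_j$. A fixed smooth bump rescaled to width $2^{-j}$ and height one has $\alpha$–Hölder seminorm of order $2^{j\alpha}$ for $0<\alpha\le 1$ (treat $|x-y|\le 2^{-j}$ and $|x-y|>2^{-j}$ separately, using $|x-y|^{1-\alpha}\le 2^{-j(1-\alpha)}$ in the first case). Since the bumps composing $h_{j,\alpha}$ have disjoint supports separated by gaps and vanish at the endpoints of those supports, the Hölder norm of the sum is of the same order, so $\|h_{j,\alpha}\|_{\mathcal{C}^\alpha}\le c_\Phi A_j 2^{j\alpha}$ for a constant $c_\Phi$ depending only on $\|\Phi\|_{C^1}$ (in particular independent of $j$, $\alpha$, $K$). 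Taking $A_j:=K/(c_\Phi 2^{j\alpha})$ gives $h_{j,\alpha}\in\mC_1^\alpha([0,d],K)$ and
\begin{align*}
	d_{(j,2^{q+\nu}p_1)\ldots(j,2^{q+\nu}p_d)}(f_{j,\alpha})=2^{-jd/2}A_j\kappa=\frac{\kappa}{c_\Phi}\,K\,2^{-\frac j2(2\alpha+d)},
\end{align*}
so the lemma holds with $c(\psi,d):=\kappa/c_\Phi$. For $j\le q+\nu$ the index set of admissible $p$'s is empty or a single point and the statement is trivial, so this covers every $j$. The expected main obstacle is the nonvanishing of $\kappa$, handled via the reduction to $\psi^{*d}$ and a Fourier argument; the geometric bookkeeping (disjointness of the $I_P$, their containment in $[0,d]$) is routine once $\nu=\lceil\log_2 d\rceil+1$ is in place, and the Hölder calibration is a standard rescaling estimate.
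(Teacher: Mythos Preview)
Your argument is correct and takes a genuinely different route from the paper's. The paper constructs $h_{j,\alpha}$ as an explicit \emph{periodic} piecewise polynomial: it sets $h_{j,\alpha}(u)=K2^{-j\alpha-1}g(\{2^{j-q-\nu}u\})$ with $g(u)=r^{-1}u^r$ on $[0,1/2]$ and its reflection on $(1/2,1]$, where $r$ is the smallest nonnegative integer with $\mu_r:=\int x^r\psi(x)\,dx\neq 0$. The wavelet coefficient is then computed by expanding $(x_1+\cdots+x_d)^r$ (or $(x_1+\cdots+x_d)^{dr}$ when $\mu_0=0$) and integrating term by term against $\prod\psi(x_i)$, yielding an explicit $c(\psi,d)$ in terms of $\mu_0$ and $\mu_r$; this forces a case split on whether $\mu_0\neq 0$.

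Your approach replaces the moment calculation by a soft argument: a sum of identical compactly supported bumps placed in the disjoint windows $I_P$, with the nonvanishing constant obtained by pairing against the $d$-fold convolution $\psi^{*d}$ and invoking density of $C_c^\infty$ in $L^2$. This avoids the case analysis entirely and is conceptually cleaner, at the price of a non-explicit $c(\psi,d)$ and a non-constructive choice of $\Phi$. The paper's route, in exchange, gives a concrete constant and (as noted after Theorem~\ref{thm.wavelet_lb}) extends directly to $0<\alpha\le r$ via higher-degree $g$; your construction would also extend, but would require revisiting the H\"older calibration for $\alpha>1$. For the stated range $0<\alpha\le 1$ both proofs are complete.
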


\begin{thm}
\label{thm.wavelet_lb}
If $\widehat f_n$ denotes the wavelet estimator \eqref{eq.def_wavelet_estimator} for a compactly supported wavelet $\psi$ and an arbitrary index set $I,$ then, for any $0< \alpha \leq 1$ and any H\"older radius $K>0,$
\begin{align*}
	\sup_{f_0(\bx)=h(\sum_{r=1}^d x_r), \, h \in \mC_1^\alpha([0,d],K)} R(\widehat f_n , f_0) \gtrsim n^{-\frac{2\alpha}{2\alpha+d}}.
\end{align*} 
\end{thm}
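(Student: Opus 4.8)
The plan is to feed the least favorable ridge functions produced by Lemma \ref{lem.wav_decay} into the variance lower bound already recorded in \eqref{eq.wav_est_risk_lb},
\begin{align*}
	R(\widehat f_n, f_0)\ \geq\ \sum_{(\blambda_1, \ldots, \blambda_d)\in \Lambda\times\ldots\times\Lambda}\frac1n\wedge d_{\blambda_1\ldots\blambda_d}(f_0)^2.
\end{align*}
Since the index set $I$ in \eqref{eq.def_wavelet_estimator} is fixed in advance, the wavelet estimator $\widehat f_n$ is a fixed map, so to establish the claimed supremum bound it is enough to produce, for each $n,$ one regression function $f_0(\bx)=h(\sum_{r=1}^d x_r)$ with $h\in\mC_1^\alpha([0,d],K)$ for which the right-hand side is of order $n^{-2\alpha/(2\alpha+d)}.$

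Next I would fix a resolution level $j=j_n$ (to be optimized below) and apply Lemma \ref{lem.wav_decay} at this level to obtain $f_{j_n,\alpha}(\bx)=h_{j_n,\alpha}(x_1+\ldots+x_d)$ with $h_{j_n,\alpha}\in\mC_1^\alpha([0,d],K)$ whose wavelet coefficients indexed by $p_1,\ldots,p_d\in\{0,\ldots,2^{j_n-q-\nu}-1\}$ all have absolute value $c(\psi,d)K\,2^{-\frac{j_n}{2}(2\alpha+d)}.$ There are $2^{d(j_n-q-\nu)}$ of these, so keeping only these terms in the sum above gives
\begin{align*}
	R(\widehat f_n, f_{j_n,\alpha})\ \geq\ 2^{d(j_n-q-\nu)}\Big(\frac1n\ \wedge\ c(\psi,d)^2K^2\,2^{-j_n(2\alpha+d)}\Big).
\end{align*}
This inequality is really the whole point: although $h(\sum_r x_r)$ carries only one-dimensional structure, Lemma \ref{lem.wav_decay} shows it is forced to have $\asymp 2^{dj_n}$ non-negligible level-$j_n$ wavelet coefficients, just as many as a generic $d$-variate $\alpha$-H\"older function, so the tensor wavelet estimator cannot take advantage of the ridge form.

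It remains to choose $j_n$ optimally. Taking $j_n$ to be the largest integer with $c(\psi,d)^2K^2\,2^{-j_n(2\alpha+d)}\geq 1/n,$ that is, $2^{j_n}\asymp(n\,c(\psi,d)^2K^2)^{1/(2\alpha+d)},$ the minimum above equals $1/n$ and
\begin{align*}
	R(\widehat f_n, f_{j_n,\alpha})\ \gtrsim\ \frac{2^{dj_n}}{n}\ \asymp\ n^{\frac{d}{2\alpha+d}-1}\ =\ n^{-\frac{2\alpha}{2\alpha+d}},
\end{align*}
with an implied constant depending only on $\psi,d,\alpha,K.$ For all $n$ above a fixed threshold this $j_n$ satisfies $j_n\geq q+\nu,$ so the index sublattice in Lemma \ref{lem.wav_decay} is non-empty; for the finitely many smaller $n$ one instead takes $j_n=q+\nu,$ leaving a single wavelet coefficient of fixed positive size, and then $R(\widehat f_n, f_{j_n,\alpha})$ is bounded below by a positive constant while $n^{-2\alpha/(2\alpha+d)}\leq1,$ so the asserted bound still holds after shrinking the constant. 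Taking the supremum over $f_0\in\{h(\sum_r x_r):h\in\mC_1^\alpha([0,d],K)\}$ then finishes the proof. Everything here is routine bookkeeping once Lemma \ref{lem.wav_decay} is available; the only mild subtlety is the interaction between rounding $j_n$ to an integer and keeping the index sublattice non-empty, which is precisely why the statement is claimed only up to a constant. I expect all the genuine difficulty to sit inside Lemma \ref{lem.wav_decay}, which is taken as given here.
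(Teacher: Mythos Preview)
Your proposal is correct and follows essentially the same route as the paper: both feed the functions from Lemma~\ref{lem.wav_decay} into the risk lower bound \eqref{eq.wav_est_risk_lb}, choose the level $j^*\asymp (nc(\psi,d)^2K^2)^{1/(2\alpha+d)}$ so that the squared coefficient matches $1/n$, and then count the $\asymp 2^{dj^*}$ large coefficients to obtain $R(\widehat f_n,f_{j^*,\alpha})\gtrsim 2^{dj^*}/n\asymp n^{-2\alpha/(2\alpha+d)}$. Your additional care about small $n$ and the non-emptiness of the index sublattice is a harmless refinement the paper omits.
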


A close inspection of the proof shows that the theorem even holds for $0< \alpha \leq r$ with $r$ the smallest positive integer for which $\int x^r \psi(x) dx \neq 0.$

\section{A brief summary of related statistical theory for neural networks}
\label{sec.history}

This section is intended as a condensed overview on related literature summarizing main proving strategies for bounds on the statistical risk. An extended summary of the work until the late 90's is given in \cite{pinkus1999}. To control the stochastic error of neural networks, bounds on the covering entropy and VC dimension can be found in the monograph \cite{anthony1999}. A challenging part in the analysis of neural networks is the approximation theory for multivariate functions. We first recall results for shallow neural networks, that is, neural networks with one hidden layer.

{\bf Shallow neural networks:} A shallow network with one output unit and width vector $(d,m,1)$ can be written as
\begin{align}
	f_m(\bx)=\sum_{j=1}^{m}c_j\sigma\big(\mathbf{w}_j^\top\bx+v_j\big), \quad \bw_j \in \mathbb{R}^d, \ v_j,c_j \in \mathbb{R}.
	\label{eq.shallow_nw}
\end{align}
The universal approximation theorem states that a neural network with one hidden layer can approximate any continuous function $f$ arbitrarily well with respect to the uniform norm provided there are enough hidden units, cf. \cite{Hornik1989, Hornik1990,Cybenko1989, Leshno1993, stinchcombe1999}. If $f$ has a derivative $f'$, then the derivative of the neural network also approximates $f'.$ The number of required hidden units might be, however, extremely large, cf. \cite{pascanu2013} and \cite{montufar2013}. There are several proofs for the universal approximation theorem based on the Fourier transform, the Radon transform and the Hahn-Banach theorem \cite{SCARSELLI199815}. 

The proofs can be sharpened in order to obtain rates of convergence. In \cite{mccaffrey1994} the convergence rate $n^{-2\beta/(2\beta+d+5)}$ is derived. Compared with the minimax estimation rate this is suboptimal by a polynomial factor. The reason for the loss of performance with this approach is that rewriting the function as a network requires too many parameters. 

In \cite{barron1993, barron1994, klusowski2016a, klusowski2016b} a similar strategy is used to derive the rate $C_f(d\tfrac{\log n}n)^{1/2}$ for the squared $L^2$-risk, where $C_f:=\int |\omega|_1|\mathcal{F}f (\omega)| d\omega$ and $\mathcal{F}f$ denotes the Fourier transform of $f.$ If $C_f< \infty$ and $d$ is fixed the rate is always $n^{-1/2}$ up to logarithmic factors. Since $\sum_i \|\partial_i f\|_\infty \leq C_f,$ this means that $C_f< \infty$ can only hold if $f$ has H\"older smoothness at least one. This rate is difficult to compare with the standard nonparametric rates except for the special case $d=1,$ where the rate is suboptimal compared with the minimax rate $n^{-2/(2+d)}$ for $d$-variate functions with smoothness one. More interestingly, the rate $C_f(d\tfrac{\log n}n)^{1/2}$ shows that neural networks can converge fast if the underlying function satisfies some additional structural constraint. The same rate can also be obtained by a Fourier series estimator, see \cite{candes2002}, Section 1.7.  In a similar fashion, \cite{Bach2017} studies abstract function spaces on which shallow networks achieve fast convergence rates.

{\bf Results for multilayer neural networks:} In \cite{mhaskar1993} it is shown how to approximate a polytope by a neural network with two hidden layers. Based on this result, \cite{kohler2005} uses two-layer neural networks with sigmoidal activation function and achieves the nonparametric rate $n^{-2\beta/(2\beta+d)}$ up to $\log n$-factors for $\beta \leq 1.$ This is extended in \cite{kohler2017} to a composition assumption and further generalized to $\beta >1$ in the recent article \cite{bauer2017}. Unfortunately, the result requires that the activation function is at least as smooth as the signal, cf. Theorem 1 in \cite{bauer2017} and therefore rules out the ReLU activation function. 

The activation function $\sigma(x)=x^2$ is not of practical relevance but has some interesting theory. Indeed, with one hidden layer, we can generate quadratic polynomials and with $L$ hidden layers polynomials of degree $2^L.$ For this activation function, the role of the network depth is the polynomial degree and we can use standard results to approximate functions in common function classes. A natural generalization is the class of activation functions satisfying $\lim_{x\rightarrow -\infty} x^{-k}\sigma(x) =0$ and $\lim_{x\rightarrow +\infty} x^{-k}\sigma(x) =1.$ 

If the growth is at least quadratic ($k\geq 2$), the approximation theory has been derived in \cite{mhaskar1993} for deep networks with number of layers scaling with $\log d.$ The same class has also been considered recently in \cite{boelcskei2017}. For the approximations to work, the assumption $k\geq 2$ is crucial and the same approach does not generalize to the ReLU activation function, which satisfies the growth condition with $k=1$, and always produces functions that are piecewise linear in the input. 

Approximation theory for the ReLU activation function has been only recently developed in \cite{telgarsky2016, liang2016, yarotski2017, suzuki2017}. The key observation is that there are specific deep networks with few units which approximate the square function well. In particular, the function approximation presented in \cite{yarotski2017} is essential for our approach and we use a similar strategy to construct networks that are close to a given function. We are, however, interested in a somehow different question. Instead of deriving existence of a network architecture with good approximation properties, we show that for any network architecture satisfying the conditions of Theorem \ref{thm.main} good approximation rates are obtainable. An additional difficulty in our approach is the boundedness of the network parameters. 

\section{Proofs}
\label{sec.proofs}

\subsection{Embedding properties of network function classes}
\label{sec.embedding_props_NNs}

For the approximation of a function by a network, we first construct smaller networks computing simpler objects. Let $\bp = (p_0, \ldots,  p_{L+1})$ and $\bp' = (p_0',  \ldots, p_{L+1}').$ To combine networks, we make frequently use of the following rules. 

{\itshape Enlarging:} $\mF(L, \bp, s) \subseteq \mF(L, \bq, s')$ whenever $\bp \leq \bq$ componentwise and $s\leq s'.$

{\itshape Composition:} Suppose that $f \in \mF(L, \bp)$ and  $g \in \mF(L',\bp')$ with $p_{L+1} =p_0'.$ For a vector $\bv \in \mathbb{R}^{p_{L+1}}$ we define the composed network $g \circ \sigma_{\bv}(f) $ which is in the space $\mF(L+L'+1, (\bp, p_1', \ldots, p_{L'+1}')).$ In most of the cases that we consider, the output of the first network is non-negative and the shift vector $\bv$ will be taken to be zero. 

{\itshape Additional layers/depth synchronization:} To synchronize the number of hidden layers for two networks, we can add additional layers with identity weight matrix, such that
\begin{align}
	\mF(L, \bp,s) \subset \mF(L+q, (\underbrace{p_0,\ldots,p_0}_{q\text{ times}} , \bp), s+qp_0).
	\label{eq.add_layers}
\end{align}

{\itshape Parallelization:} Suppose that $f,g$ are two networks with the same number of hidden layers and the same input dimension, that is, $f \in \mF(L, \bp)$ and $g \in \mF(L, \bp')$ with $p_0=p_0'.$ The parallelized network $(f,g)$ computes $f$ and $g$ simultaneously in a joint network in the class $\mF(L, (p_0, p_1+p_1', \ldots, p_{L+1}+p_{L+1}')).$ 

{\itshape Removal of inactive nodes:} We have 
\begin{align}
	\mF(L, \bp, s)=\mF\big(L, (p_0, p_1\wedge s, p_2\wedge s,  \ldots, p_L\wedge s, p_{L+1}), s\big).
	\label{eq.removal_nodes_identity}	
\end{align}
To see this, let $f(\bx)= W_L\sigma_{\bv_L} W_{L-1} \ldots \sigma_{\bv_1} W_0 \bx \in \mF(L, \bp, s).$ If all entries of the $j$-th column of $W_i$ are zero, then we can remove this column together with the $j$-th row in $W_{i-1}$ and the $j$-th entry of $\bv_i$ without changing the function. This shows then that $f \in \mF(L, (p_0,\ldots, p_{i-1}, p_i-1,p_{i+1}, \ldots, p_{L+1}), s).$ Because there are $s$ active parameters, we can iterate this procedure at least $p_i-s$ times for any $i=1, \ldots, L.$  This proves $f \in \mF\big(L, (p_0, p_1\wedge s, p_2\wedge s,  \ldots, p_L\wedge s, p_{L+1}), s\big).$

We frequently make use of the fact that for a fully connected network in $\mF(L, \bp),$ there are $\sum_{\ell=0}^L p_\ell p_{\ell+1}$ weight matrix parameters and $\sum_{\ell=1}^{L} p_\ell$ network parameters coming from the shift vectors. The total number of parameters is thus
\begin{align}
	\sum_{\ell=0}^L (p_\ell +1) p_{\ell+1} -p_{L+1}.
	\label{eq.nr_of_param_in_net}
\end{align}

\begin{thm}
\label{thm.approx_network_One_fct}
For any function $f\in \mC_r^\beta([0,1]^r, K)$ and any integers $m \geq 1$ and $N \geq  (\beta+1)^r \vee (K+1)e^r,$ there exists a network $$\widetilde f \in \mF\big(L, \big(r, 6(r+\lceil \beta\rceil)N, \ldots, 6(r+\lceil \beta\rceil)N,1\big), s, \infty\big)$$ with depth $$L=8+(m+5)(1+\lceil \log_2 (r\vee \beta) \rceil)$$ and number of parameters
\begin{align*}
	s\leq 141 (r+\beta+1)^{3+r} N (m+6),
\end{align*}
such that 
\begin{align*}
	\| \widetilde f - f\|_{L^\infty([0,1]^r)}\leq  (2K+1)(1+r^2+\beta^2) 6^r N2^{-m}+ K3^\beta N^{-\frac{\beta}r}.
\end{align*}
\end{thm}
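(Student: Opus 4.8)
The plan is to construct the approximating network $\widetilde f$ in several modular stages, using the composition, parallelization, and depth-synchronization rules from Section \ref{sec.embedding_props_NNs}, and to combine this with the classical localized Taylor approximation of H\"older functions. First I would cover the cube $[0,1]^r$ by a grid of $(N+1)^r$ points $\bx_{\bm} = \bm/N$ with $\bm \in \{0,1,\ldots,N\}^r$, and attach to each grid point a local Taylor polynomial $P_{\bm}$ of degree $\lceil\beta\rceil - 1$; standard H\"older estimates give $\|f - \sum_{\bm} P_{\bm}(\cdot)\mathbf{1}_{\text{cell}}\|_\infty \lesssim K N^{-\beta}$, which accounts for the second term $K3^\beta N^{-\beta/r}$ in the error bound. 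The remaining task is purely an approximation-theoretic network construction: build a ReLU network that (a) computes, for each grid point, a ``hat''-type partition-of-unity weight $\prod_{j=1}^r (1 - N|x_j - m_j/N|)_+$ localizing near $\bx_{\bm}$, and (b) evaluates the corresponding Taylor polynomial, then sums the products.

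The key sub-constructions are: first, the approximate-multiplication network — using the well-known fact that $x\mapsto x^2$ can be approximated on $[0,1]$ to accuracy $2^{-m}$ by a ReLU network of depth $O(m)$ and $O(m)$ parameters (the sawtooth/Yarotsky construction), hence $xy = \tfrac14((x+y)^2 - (x-y)^2)$ can be approximated similarly, and then iterating to get a product of $k$ inputs with depth $O(m\log k)$; this is where the $(1+\lceil\log_2(r\vee\beta)\rceil)$ factor in $L$ originates, since both the $r$-fold products (for the hat functions) and the monomials of degree up to $\lceil\beta\rceil$ (for the Taylor polynomials) require iterated multiplication. Second, the hat functions themselves are exactly representable by small ReLU networks (two layers), so only the multiplications are approximate. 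Third, I would parallelize the $O(N^r)$ local building blocks, but crucially the ``active parameters'' count $s$ must stay $O(N\,\text{poly}(r+\beta)\,m)$ rather than $O(N^r)$ — this is achieved because, as in Yarotsky's argument, the grid/hat structure can be encoded with a number of parameters linear in $N$ (not $N^r$) by building the coordinate-wise pieces first and combining, and because away from its support each hat contributes nothing. I would track widths through each stage so that the composite width is $6(r+\lceil\beta\rceil)N$ and verify the parameter bound $s \le 141(r+\beta+1)^{3+r}N(m+6)$ by summing the contributions of the multiplication gadgets (each $O(m)$ parameters) across the $O((r+\beta+1)^{2+r}N)$ gadgets.

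Finally I would assemble the error: the Taylor localization error is $\lesssim KN^{-\beta/r}$ (the exponent $\beta/r$ arising because accuracy $N^{-\beta}$ in terms of number of grid points $N^r$ per the statement's normalization of $N$), and the network-induced error comes from (i) replacing exact products by approximate ones in the hat functions and monomials, each contributing $O(2^{-m})$ per factor, and (ii) the fact that there are at most $2^r$ non-vanishing hats at any point and the coefficients are bounded by $K$; collecting constants gives the first term $(2K+1)(1+r^2+\beta^2)6^r N 2^{-m}$, where the factor $N$ appears because the unnormalized polynomial values can be of size $O(N)$ before the final rescaling and the $6^r$ tracks the width/combinatorial overhead. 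The conditions $N \ge (\beta+1)^r \vee (K+1)e^r$ and $m\ge 1$ ensure the constructions are non-degenerate (e.g., enough grid points to resolve the Taylor expansion, and the multiplication gadgets operate on inputs in their valid range after suitable affine normalization, which is where boundedness of all weights by $1$ must be checked).

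The main obstacle I expect is the bookkeeping required to keep $s$ linear in $N$ while the natural tensor-product construction is exponential in $r$: one must be careful to build the coordinate-wise hat components once and share them, and to bound the depth of iterated multiplication by $O(m\log(r\vee\beta))$ uniformly, all while respecting the constraint that every weight lies in $[-1,1]$ (which forces extra normalization layers and careful scaling of the Taylor coefficients, since H\"older balls only bound derivatives, not the raw monomial coefficients after rescaling by powers of $N$). Verifying that these normalizations do not blow up the error beyond the stated bound, and that the explicit constants $141$, $6$, $8$, $m+5$, $m+6$ come out as claimed, is the most delicate part.
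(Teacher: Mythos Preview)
Your proposal misreads the role of $N$. You set up a grid with $(N+1)^r$ points at spacing $1/N$, but in the paper $N$ is essentially the \emph{total} number of grid points: one picks $M$ maximal with $(M+1)^r \le N$, so $M\approx N^{1/r}$ is the per-coordinate resolution. This is why the Taylor localization error is $KM^{-\beta}\le K3^\beta N^{-\beta/r}$ --- the exponent $\beta/r$ is not a ``normalization artifact'' but comes directly from $M\approx N^{1/r}$.

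This misreading creates a gap you cannot close. With $(N+1)^r$ grid points, the parallelized hat/Taylor blocks would need width and sparsity of order $N^r$, not $N$. Your proposed fix --- ``the grid/hat structure can be encoded with a number of parameters linear in $N$ by building coordinate-wise pieces and combining'' --- does not exist for this construction and is not in Yarotsky's argument either: while the one-dimensional hat factors $(1/M-|x_j-\ell/M|)_+$ can be shared across the first two layers, you still need a separate $r$-fold product \emph{and} a separate Taylor polynomial $P_{\bx_\ell}^\beta f$ for each of the $(M+1)^r$ grid points, since the Taylor coefficients depend on $\bx_\ell$. The paper simply builds $(M+1)^r\le N$ parallel local blocks, so the width $6(r+\lceil\beta\rceil)N$ and sparsity $O(N)$ bounds follow with no sharing trick. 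The remaining ingredients you describe --- iterated $\Mult_m$ for products, monomial networks for the Taylor pieces, and careful normalization to keep all weights in $[-1,1]$ --- are indeed the right ones; the factor $N$ in the first error term arises from the final affine rescaling by $BM^r$ (with $B=\lceil 2Ke^r\rceil$), which amplifies the accumulated $2^{-m}$ multiplication errors.
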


The proof of the theorem is given in the supplement. The idea is to first build networks that for given input $(x,y)$ approximately compute the product $xy.$ We then split the input space into small hyper-cubes and construct a network that approximates a local Taylor expansion on each of these hyper-cubes.

Based on Theorem \ref{thm.approx_network_One_fct}, we can now construct a network that approximates $f=g_q \circ \ldots \circ g_0.$ In a first step, we show that $f$ can always be written as composition of functions defined on hypercubes $[0,1]^{t_i}.$ As in the previous theorem, let $g_{ij} \in \mC_{t_i}^{\beta_i}([a_i,b_i]^{t_i},K_i)$ and assume that $K_i \geq 1.$ For $i=1, \ldots, q-1,$ define
\begin{align*}
	h_0 := \frac{g_0}{2K_0} + \frac 12, \quad h_i := \frac{g_i(2K_{i-1}\cdot - K_{i-1})}{2K_i} + \frac 12, \quad h_q= g_q(2K_{q-1}\cdot -K_{q-1}).
\end{align*}
Here, $2K_{i-1}\bx - K_{i-1}$ means that we transform the entries by $2K_{i-1}x_j - K_{i-1}$ for all $j.$ Clearly,
\begin{align}
	f = g_q \circ \ldots g_0 = h_q \circ \ldots \circ h_0.
	\label{eq.f=h_comp}
\end{align}
Using the definition of the H\"older balls $C_r^\beta(D,K),$ it follows that $h_{0j}$ takes values in $[0,1],$ $h_{0j} \in \mC_{t_0}^{\beta_0}([0,1]^{t_0},1),$  $h_{ij} \in  \mC_{t_i}^{\beta_i}([0,1]^{t_i},(2K_{i-1})^{\beta_i})$ for $i=1, \ldots, q-1,$ and $h_{qj}  \in  \mC_{t_q}^{\beta_q}([0,1]^{t_q},K_q(2K_{q-1})^{\beta_q}).$ Without loss of generality, we can always assume that the radii of the H\"older balls are at least one, that is, $K_i \geq 1.$

\begin{lem}
\label{lem.comp_approx}
Let $h_{ij}$ be as above with $K_i \geq 1$. Then, for any functions $\widetilde h_i = (\widetilde h_{ij})_j^\top$ with $\widetilde h_{ij}:[0,1]^{t_i}\rightarrow [0,1],$
\begin{align*}
	&\big \| h_q \circ \ldots  \circ h_0 - \widetilde h_q \circ \ldots  \circ \widetilde h_0 \big\|_{L^\infty[0,1]^d} \\
	&\leq K_q \prod_{\ell=0}^{q-1} (2K_\ell)^{\beta_{\ell+1}}
	\sum_{i=0}^q \big\| |h_i - \widetilde h_i |_\infty \big\|_{L^\infty[0,1]^{d_i}}^{\prod_{\ell = i+1}^q \beta_\ell \wedge 1}.
\end{align*}
\end{lem}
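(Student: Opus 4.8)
The plan is to prove the bound by telescoping over the $q+1$ composition layers and controlling each term recursively. Write $G_i := h_q \circ \cdots \circ h_{i}$ and $\widetilde G_i := \widetilde h_q \circ \cdots \circ \widetilde h_i$, so that $G_0 = h_q \circ \cdots \circ h_0$ and $\widetilde G_0 = \widetilde h_q \circ \cdots \circ \widetilde h_0$. I would first decompose
\begin{align*}
	\big\| G_0 - \widetilde G_0\big\|_\infty
	\leq \sum_{i=0}^q \big\| G_{i+1} \circ h_i \circ \widetilde h_{i-1}\circ\cdots\circ\widetilde h_0 - G_{i+1}\circ \widetilde h_i\circ \widetilde h_{i-1}\circ\cdots\circ\widetilde h_0\big\|_\infty,
\end{align*}
with the convention $G_{q+1} = \operatorname{id}$, by inserting the hybrid functions $G_{i+1}\circ h_i\circ\widetilde h_{i-1}\circ\cdots\circ\widetilde h_0$ for $i=q,q-1,\ldots,0$ and applying the triangle inequality. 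The $i$-th term measures how the final map $G_{i+1}$ distorts the deviation introduced when $\widetilde h_i$ is replaced by $h_i$, both evaluated at the point $(\widetilde h_{i-1}\circ\cdots\circ\widetilde h_0)(\bx)\in[0,1]^{d_i}$.

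The key step is then a Hölder-modulus bound for $G_{i+1}$. Each $h_{\ell j}$ for $\ell\geq i+1$ is $\beta_\ell$-Hölder with radius $(2K_{\ell-1})^{\beta_\ell}$ (or $K_q(2K_{q-1})^{\beta_q}$ for $\ell=q$), mapping $[0,1]^{t_\ell}$ into $[0,1]$. Composing Hölder functions contracts the smoothness multiplicatively: if $u$ is $\beta$-Hölder with $\beta\leq 1$ then $|u(\bx)-u(\by)|\le K|\bx-\by|_\infty^\beta$, while if $\beta>1$ then $u$ is Lipschitz on the bounded domain, and iterating down the chain $h_q,\ldots,h_{i+1}$ yields
\begin{align*}
	\big| G_{i+1}(\bu) - G_{i+1}(\bv)\big|_\infty \leq \Big(K_q\prod_{\ell=0}^{q-1}(2K_\ell)^{\beta_{\ell+1}}\Big)\, \big|\bu - \bv\big|_\infty^{\prod_{\ell=i+1}^q \beta_\ell\wedge 1}.
\end{align*}
I would establish this by induction on the number of composed layers: peeling off the outermost map $h_q$ contributes a factor $K_q(2K_{q-1})^{\beta_q}$ and raises the exponent to the power $\beta_q\wedge 1$, then $h_{q-1}$ contributes $(2K_{q-2})^{\beta_{q-1}}$ and raises to the further power $\beta_{q-1}\wedge 1$, and so on; using $|\bu-\bv|_\infty\le 1$ lets one replace each exponent $\beta_\ell$ by $\beta_\ell\wedge 1$ and absorb the radius constants cleanly. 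Applying this with $\bu = h_i(\cdot)$, $\bv = \widetilde h_i(\cdot)$ at the argument $(\widetilde h_{i-1}\circ\cdots\circ\widetilde h_0)(\bx)$ gives the $i$-th summand bounded by the prefactor times $\big\||h_i-\widetilde h_i|_\infty\big\|_{L^\infty[0,1]^{d_i}}^{\prod_{\ell=i+1}^q\beta_\ell\wedge 1}$, and summing over $i$ finishes the proof.

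The main obstacle is handling the case $\beta_\ell>1$ correctly in the induction: then $h_{\ell j}$ is not Hölder with exponent $\beta_\ell$ in the naive sense, but because the domain is the bounded cube and the Hölder norm controls the first derivatives, $h_{\ell j}$ is Lipschitz there, so the relevant exponent is $\beta_\ell\wedge 1 = 1$; one must also check that the radius of the Hölder ball enters only as the stated product of constants and does not accumulate extra factors. A secondary bookkeeping point is that the exponents multiply in the right order — the deviation at level $i$ is acted on by \emph{all} subsequent maps $h_{i+1},\ldots,h_q$, hence the exponent $\prod_{\ell=i+1}^q(\beta_\ell\wedge 1)$ — and that all the $h_{ij}$ indeed take values in $[0,1]$ (as noted above Lemma~\ref{lem.comp_approx}) so the compositions and the bound $|\bu-\bv|_\infty\le 1$ are legitimate throughout. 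Everything else is routine.
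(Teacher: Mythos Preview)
Your proposal is correct and closely related to the paper's argument, though organized in a dual way. The paper defines $H_i := h_i\circ\cdots\circ h_0$ (the \emph{inner} partial compositions), obtains the one-step recursion
\[
|H_i-\widetilde H_i|_\infty \le Q_i\,|H_{i-1}-\widetilde H_{i-1}|_\infty^{\beta_i\wedge 1} + \big\||h_i-\widetilde h_i|_\infty\big\|_{L^\infty},
\]
and then unrolls it using the subadditivity inequality $(y+z)^\alpha\le y^\alpha+z^\alpha$ for $\alpha\in[0,1]$. You instead telescope over hybrid compositions and push the error through the \emph{outer} partial composition $G_{i+1}=h_q\circ\cdots\circ h_{i+1}$, for which you establish a H\"older modulus by a separate induction. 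Fully unrolled, the paper's recursion produces exactly your telescoping sum with the same prefactors, so the two routes coincide term by term. The trade-off is that the paper's version is shorter (one recursion plus one inequality), while yours avoids the subadditivity trick at the cost of proving the modulus of $G_{i+1}$ first; neither gains or loses anything in the final bound.
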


\begin{proof}
Define $H_i = h_i \circ \ldots \circ h_0$ and $\widetilde H_i = \widetilde h_i \circ \ldots \circ \widetilde h_0.$ If $Q_i$ is an upper bound for the H\"older semi-norm of $h_{ij},$ $j=1,\ldots, d_{i+1},$ we find using triangle inequality, 
\begin{align*}
	&\big | H_i(\bx) - \widetilde H_i(\bx)\big|_\infty \\
	&\leq | h_i \circ H_{i-1}(\bx) -  h_i \circ \widetilde H_{i-1}(\bx)\big|_\infty
	+ | h_i \circ \widetilde H_{i-1}(\bx) - \widetilde h_i \circ \widetilde H_{i-1}(\bx)\big|_\infty\\
	&\leq Q_i \big| H_{i-1}(\bx) - \widetilde H_{i-1}(\bx) \big|_\infty^{\beta_i\wedge 1} + \| |h_i-\widetilde h_i|_\infty \|_{L^\infty[0,1]^{d_i}}.
\end{align*}
Together with the inequality $(y+z)^\alpha \leq y^\alpha + z^\alpha $ which holds for all $y, z \geq 0$ and all $\alpha \in [0,1],$ the result follows.
\end{proof}

\begin{proof}[Proof of Theorem \ref{thm.main}]
It is enough to prove the result for all sufficiently large $n.$ Throughout the proof $C'$ is a constant only depending on $(q, \bd, \bt, \bbeta, F)$ that may change from line to line. Combining Theorem \ref{thm.oracle_ineq} with the assumed bounds on the depth $L$ and the network sparsity $s,$ it follows for $n \geq 3,$
\begin{align}
	\begin{split}
	&\frac 14 \Delta_n(\widehat f_n, f_0) - C'\phi_n  L \log^2 n
	\leq 
	R(\widehat f, f_0) \\
	&\leq 4 \inf_{f^*\in \mF(L,\bp,s, F)} \big\| f^* - f_0\big\|_\infty^2 + 4\Delta_n(\widehat f_n, f_0) + C'\phi_n L  \log^2 n,
	\end{split}
	\label{eq.proof_mt0}
\end{align}
where we used $\eps =1/2$ for the lower bound and $\eps =1$ for the upper bound. Taking $C=8C',$ we find that $\tfrac 18 \Delta_n(\widehat f_n, f_0) \leq R(\widehat f, f_0)$ whenever $\Delta_n(\widehat f_n, f_0)\geq C\phi_n L \log^2 n.$ This proves the lower bound in \eqref{eq.main2}.

To derive the upper bounds in \eqref{eq.main1} and \eqref{eq.main2} we need to bound the approximation error. To do this, we rewrite the regression function $f_0$ as in \eqref{eq.f=h_comp}, that is,  $f_0=h_q \circ \ldots h_0$ with $h_i=(h_{ij})_j^\top,$ $h_{ij}$ defined on $[0,1]^{t_i},$ and for any $i<q,$ $h_{ij}$ mapping to $[0,1].$

We apply Theorem \ref{thm.approx_network_One_fct} to each function $h_{ij}$ separately. Take $m = \lceil \log_2 n \rceil$ and let $L'_i:= 8+(\lceil \log_2 n \rceil+5)(1+\lceil \log_2 (t_i\vee \beta_i) \rceil).$ This means there exists a network $\widetilde h_{ij} \in \mF(L'_i, (t_i, 6(t_i+\lceil \beta_i\rceil )N,\ldots,6(t_i+\lceil \beta_i\rceil ) N,1), s_i)$ with $s_i \leq 141 (t_i+\beta_i+1)^{3+t_i} N (\lceil \log_2 n \rceil+6),$ such that
\begin{align}
	\| \widetilde h_{ij} - h_{ij}\|_{L^\infty([0,1]^{t_i})}\leq (2Q_i+1)(1+t_i^2 +\beta_i^2)6^{t_i} N  n^{-1}+ Q_i 3^{\beta_i} N^{-\frac{\beta_i}{t_i}},
	\label{eq.proof_mt1}
\end{align}
where $Q_i$ is any upper bound of the H\"older norms of $h_{ij}.$ If $i <q,$ then we apply to the output the two additional layers $1-(1-x)_+.$ This requires four additional parameters. Call the resulting network $h_{ij}^* \in \mF(L'_i+2, (t_i, 6(t_i+\lceil \beta_i\rceil )N,\ldots, 6(t_i+\lceil \beta_i\rceil )N,1), s_i+4)$ and observe that $\sigma(h_{ij}^*) = (\widetilde h_{ij}(x)\vee 0) \wedge 1.$ Since $h_{ij}(\bx) \in [0,1],$ we must have 
\begin{align}
	\| \sigma(h_{ij}^*) - h_{ij}\|_{L^\infty([0,1]^r)}\leq \| \widetilde h_{ij} - h_{ij}\|_{L^\infty([0,1]^r)}.
	\label{eq.proof_mt2}
\end{align}
If the networks $h_{ij}^*$ are computed in parallel, $h_i^* =(h_{ij}^*)_{j = 1, \ldots, d_{i+1}}$ lies in the class
\begin{align*}
	 \mF \big(L'_i+2, (d_i, 6r_i N,\ldots,6r_i N,d_{i+1}), d_{i+1}(s_i+4) \big),
\end{align*}
with $r_i:=\max_i d_{i+1}(t_i+\lceil \beta_i\rceil ).$ Finally, we construct the composite network $f^* = \widetilde h_{q1} \circ \sigma(h_{q-1}^*) \circ \ldots \circ \sigma(h_{0}^*)$ which by the composition rule in Section \ref{sec.embedding_props_NNs} can be realized in the class
\begin{align}
	\mF \Big( E, (d,  6r_i N,\ldots,6 r_i N,1), \sum_{i=0}^q d_{i+1}(s_i+4) \Big),
	\label{eq.network567}
\end{align}
with $E:=3(q-1)+\sum_{i=0}^q L'_i.$ Observe that there is an $A_n$ that is bounded in $n$ such that $E= A_n + \log_2 n (\sum_{i=0}^q \lceil \log_2 (t_i \vee \beta_i) \rceil +1).$ Using that $\lceil x \rceil < x+1,$ $E \leq \sum_{i=0}^q (\log_2(4)+\log_2 (t_i  \vee \beta_i) ) \log_2 n \leq L$ for all sufficiently large $n.$ 
By \eqref{eq.add_layers} and for sufficiently large $n,$ the space \eqref{eq.network567} can be embedded into $\mF(L, \bp, s)$ with $L, \bp, s$ satisfying the assumptions of the theorem by choosing $N=\lceil c \max_{i=0, \ldots,q} n^{\frac{t_i}{2\beta_i^*+t_i}} \rceil$ for a sufficiently small constant $c>0$ only depending on $q, \bd, \bt, \bbeta.$ Combining Lemma \ref{lem.comp_approx} with \eqref{eq.proof_mt1} and \eqref{eq.proof_mt2} 
\begin{align}
	\inf_{f^*\in \mF(L,\bp,s)} \big\| f^* - f_0\big\|_\infty^2 \leq C' \max_{i=0, \ldots,q} N^{-\frac{2\beta_i^*}{t_i}} \leq C'\max_{i=0, \ldots,q} c^{-\frac{2\beta_i^*}{t_i}}  n^{-\frac{2\beta_i^*}{2\beta_i^*+t_i}}.
	\label{eq.proof_mt3}
\end{align}
For the approximation error in \eqref{eq.proof_mt0} we need to find a network function that is bounded in sup-norm by $F.$ By the previous inequality there exists a sequence $(\widetilde f_n)_n$ such that for all sufficiently large $n,$ $\widetilde f_n \in \mF(L,\bp,s)$ and  $\| \widetilde f_n - f_0\|_\infty^2\leq 2C\max_{i=0, \ldots,q} c^{-2\beta_i^*/t_i}  n^{-(2\beta_i^*)/(2\beta_i^*+t_i)}.$ Define $f^*_n = \widetilde f_n (\|f_0\|_\infty/\| \widetilde f_n \|_\infty \newline \wedge 1).$ Then, $\|f^*_n\|_\infty \leq \|f_0\|_\infty = \|g_q\|_\infty \leq K \leq F,$ where the last inequality follows from assumption $(i).$ Moreover, $f^*_n \in \mF(L, \bp, s, F).$ Writing $f_n^* - f_0 = (f_n^*- \widetilde f_n) + ( \widetilde f_n -f_0),$ we obtain $\|f_n^* - f_0\|_\infty \leq 2 \|\widetilde f_n - f_0\|_\infty.$ This shows that \eqref{eq.proof_mt3} also holds (with constants multiplied by $8$) if the infimum is taken over the smaller space $\mF(L, \bp, s, F).$ Together with \eqref{eq.proof_mt0} the upper bounds in \eqref{eq.main1} and \eqref{eq.main2} follow for any constant $C$. This completes the proof. 
\end{proof}

\subsection{Proof of Theorem \ref{thm.oracle_ineq}}
\label{subsec.proofs.thm.oracle_ineq}

Several oracle inequalities for the least-squares estimator are know, cf. \cite{gyorfi2002, koltchinskii2006, gine2006, hamers2006, massart2007}. The common assumption of bounded response variables is, however, violated in the nonparametric regression model with Gaussian measurement noise. Additionally we provide also a lower bound of the risk and give a proof that can be easily generalized to arbitrary noise distributions. Let $\mN(\delta, \mF, \|\cdot \|_\infty)$ be the covering number, that is, the minimal number of $\|\cdot \|_\infty$-balls with radius $\delta$ that covers $\mF$ (the centers do not need to be in $\mF$).

\begin{lem}
\label{lem.oracle_gen}
Consider the $d$-variate nonparametric regression model \eqref{eq.mod} with unknown regression function $f_0.$ Let $\widehat f$ be any estimator taking values in $\mF.$ Define 
\begin{align*}
	\Delta_n:=\Delta_n(\widehat f, f_0,\mF):= E_{f_0}\Big[\frac 1n \sum_{i=1}^n (Y_i - \widehat f(\bX_i) )^2 -\inf_{f\in \mF} \frac 1n \sum_{i=1}^n (Y_i -f(\bX_i) )^2\Big]
\end{align*}
and assume $\{f_0\}\cup\mF \subset \{f :[0,1]^d \rightarrow [-F,F]\}$ for some $F\geq 1.$ If $\mN_n :=  \mN(\delta, \mF, \|\cdot\|_\infty) \geq 3,$ then, 
\begin{align*}
	&(1-\eps)^2 \Delta_n - F^2\frac{18\log \mN_n + 76}{n\eps} - 38\delta F \\
	&\leq R( \widehat f, f_0) \\
	&\leq (1+\eps)^2 \Big[ \inf_{f\in \mF} E\big[\big(f(\bX) - f_0(\bX) \big)^2 \big] 
	+ F^2 \frac{ 18\log \mN_n + 72}{n\eps } + 32 \delta F +\Delta_n \Big], 
\end{align*}
for all $\delta, \eps \in (0,1].$
\end{lem}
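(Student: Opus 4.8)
The plan is to prove the oracle inequality by a chaining-free argument that directly handles the finite covering $\mF_\delta$ of $\mF$ and controls the empirical process via Bernstein-type concentration. First I would fix a $\delta$-covering $\{f_1,\ldots,f_{\mN_n}\}$ of $\mF$ in $\|\cdot\|_\infty$ and, for a given realization $\widehat f \in \mF,$ pick $f_j$ with $\|\widehat f - f_j\|_\infty \le \delta.$ Decompose
$$\frac1n\sum_i (Y_i-\widehat f(\bX_i))^2 - \frac1n\sum_i(Y_i-f_0(\bX_i))^2 = \frac1n\sum_i (\widehat f(\bX_i)-f_0(\bX_i))^2 - \frac2n\sum_i \eps_i(\widehat f(\bX_i)-f_0(\bX_i)),$$
and pass from $\widehat f$ to $f_j$ at the cost of terms that are $O(\delta F)$ (using $\|\widehat f\|_\infty,\|f_0\|_\infty\le F$ and Cauchy–Schwarz for the noise piece, together with $\frac1n\sum_i\eps_i^2$ concentrating near $1$). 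The key step is then: for the least-squares estimator over the finite net, the empirical process $\frac1n\sum_i\eps_i(f_j(\bX_i)-f_0(\bX_i))$ is, conditionally on $(\bX_i)$, a Gaussian with variance $\frac1{n^2}\sum_i (f_j(\bX_i)-f_0(\bX_i))^2$; a union bound over the $\mN_n$ net points gives that, with high probability, simultaneously
$$\Big|\frac1n\sum_i\eps_i(f_j(\bX_i)-f_0(\bX_i))\Big| \le \sqrt{\tfrac{2\log(\mN_n)+t}{n}}\cdot\Big(\tfrac1n\sum_i(f_j(\bX_i)-f_0(\bX_i))^2\Big)^{1/2},$$
and absorbing the square root into the quadratic term by Young's inequality ($2ab \le \eps a^2 + \eps^{-1}b^2$) produces the $(1\pm\eps)^2$ factors and the $F^2\log\mN_n/(n\eps)$ remainder.

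The second half of the argument transfers the empirical $L^2$-norm $\frac1n\sum_i(f(\bX_i)-f_0(\bX_i))^2$ to the population risk $E[(f(\bX)-f_0(\bX))^2] = R(f,f_0).$ Here I would again work over the $\delta$-net: each $h_j := (f_j-f_0)^2$ is a bounded function ($0\le h_j\le F^2$), so Bernstein's inequality together with a union bound over $j$ and the comparison $\Var(h_j(\bX))\le F^2\, E[h_j(\bX)]$ gives that, on the same high-probability event,
$$\Big|\frac1n\sum_i h_j(\bX_i) - E[h_j(\bX)]\Big| \le \eps\, E[h_j(\bX)] + C\,\frac{F^2(\log\mN_n + 1)}{n\eps}$$
for all net points simultaneously, and the $O(\delta F)$ slack handles the passage from $\widehat f$ to $f_j.$ Chaining these two transfers (empirical-to-population on the risk side, and the empirical-process bound on the cross term) and rearranging yields the stated two-sided bound, with the additive constants $76, 72, 38, 32$ coming from bookkeeping the numerical constants in the Bernstein/Gaussian tail bounds and the failure probability of the bad event (chosen so that its contribution to the expectation, since all quantities are bounded by $O(F^2)$, is itself $O(F^2/n)$, hence absorbed).

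I expect the main obstacle to be the noise-times-error cross term when the estimator is not an exact minimizer: because $\widehat f$ is only required to have empirical risk within $\Delta_n$ of the infimum, one must carry the definition of $\Delta_n$ carefully through the decomposition — in particular, comparing the empirical risk of $\widehat f$ not to $f_0$ but to the best $f\in\mF,$ which forces the term $\inf_{f\in\mF}E[(f(\bX)-f_0(\bX))^2]$ into the upper bound and requires a matching lower-bound bookkeeping to obtain the $(1-\eps)^2\Delta_n$ lower bound. A secondary technical point is that $\frac1n\sum_i\eps_i^2$ and the Gaussian tail must be controlled in expectation (not just with high probability), which is why the final inequality is phrased for $R(\widehat f,f_0)=E_{f_0}[\cdot]$ and why one needs the boundedness $\mF\subset\{f:[0,1]^d\to[0,F]\}$ to truncate the rare bad event cheaply; the absence of bounded responses (Gaussian noise) is precisely what rules out citing existing oracle inequalities directly and necessitates this self-contained treatment.
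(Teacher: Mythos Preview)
Your proposal is essentially the paper's argument: reduce to a finite $\delta$-net, control the Gaussian cross term $\tfrac1n\sum_i\eps_i(f_j-f_0)(\bX_i)$ via its conditional normality and a maximum over $\mN_n$ points, control the empirical-versus-population discrepancy of $(f_j-f_0)^2$ via a Bernstein/self-bounding argument, and convert square-root terms into $(1\pm\eps)$ multiplicative factors plus $F^2\log\mN_n/(n\eps)$ remainders using $2ab\le \eps a^2+\eps^{-1}b^2$.

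Two organizational differences are worth noting. First, the paper works in expectation throughout rather than establishing a high-probability event and truncating: for the cross term it bounds $E[\max_j \xi_j^2]$ directly (with $\xi_j$ conditionally standard normal), and for the empirical-to-population step it introduces independent copies $\bX_i'$ and integrates the Bernstein tail of a self-normalized maximum $T=\max_j|\sum_i g_j(\bX_i,\bX_i')|/(r_jF)$ to get $E[T]$ and $E[T^2]$; a Cauchy--Schwarz step $E[UT]\le E^{1/2}[U^2]E^{1/2}[T^2]$ with $U$ essentially $R(\widehat f,f_0)^{1/2}$ then produces the implicit inequality that is resolved by the elementary fact $|a-b|\le 2\sqrt{a}c+d \Rightarrow (1-\eps)b-d-c^2/\eps\le a\le (1+\eps)(b+d)+(1+\eps)^2c^2/\eps$. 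This sidesteps precisely the truncation issue you flagged. Second, for the empirical-to-population transfer the paper's symmetrization with $\bX_i'$ plays the same role as your direct Bernstein on $h_j=(f_j-f_0)^2$ with $\Var(h_j)\le F^2E[h_j]$; both yield the needed multiplicative comparison. Your high-probability route is viable since the terminal quantities $R,\widehat R_n,\Delta_n$ are all bounded by $O(F^2)$, but the paper's expectation-first packaging is cleaner and is what actually delivers the stated constants.
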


The proof of the lemma can be found in the supplement. Next, we prove a covering entropy bound. Recall the definition of the network function class $\mF(L, \bp, s, F)$ in \eqref{eq.defi_bd_sparse_para_space}. 

\begin{lem}
\label{lem.entropy}
If $V:=\prod_{\ell = 0}^{L+1} (p_\ell +1),$ then, for any $\delta>0,$
\begin{align*}
	\log \mathcal{N}\Big( \delta, \mF(L, \bp, s, \infty), \|\cdot \|_\infty \Big)
	\leq (s+1) \log \Big( 2\delta^{-1} (L+1) V^2 \Big).
\end{align*}
\end{lem}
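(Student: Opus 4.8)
### Proof proposal for Lemma~\ref{lem.entropy}

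The plan is to build an explicit $\delta$-net by discretizing each of the (at most $s$) active parameters on a sufficiently fine grid, and to control how perturbations of the parameters propagate through the network. First I would observe that by the removal-of-inactive-nodes identity \eqref{eq.removal_nodes_identity} we may as well think of every $f \in \mF(L,\bp,s,\infty)$ as determined by the \emph{positions} of its active parameters (which entries of which $W_j$ or $\bv_j$ are nonzero) together with their \emph{values}. The number of choices for the positions is at most $\binom{T}{s}$ where $T = \sum_{\ell=0}^L (p_\ell+1)p_{\ell+1}$ is the total number of parameters; since $T \le V := \prod_{\ell=0}^{L+1}(p_\ell+1)$, this combinatorial factor is at most $V^s$ (crudely, $\binom{T}{s}\le T^s \le V^s$). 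For a fixed sparsity pattern, each of the $\le s$ active values lies in $[-1,1]$, so a grid of spacing $\eta$ gives $(2/\eta)$ points per parameter and $(2/\eta)^s$ tuples in total.

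The heart of the argument is a Lipschitz-type estimate: if $f$ and $f'$ share the same architecture and differ only in that each parameter is moved by at most $\eta$ in absolute value, then $\|f-f'\|_{L^\infty([0,1]^d)}$ is small. The standard way to see this (cf.\ the approach in \cite{anthony1999}) is to write the difference telescopically over the layers. Because all parameters are bounded by $1$ and the inputs lie in $[0,1]^d$, one gets a uniform bound on the sup-norm of the output of every hidden layer of the form $V_\ell := \prod_{k\le \ell}(p_k+1)$ or similar; the ReLU activation is $1$-Lipschitz, so replacing the weights/shifts in one layer by nearby ones perturbs that layer's output by at most $\eta$ times (number of incoming active connections) times (bound on previous layer's output), and the perturbation is then propagated and amplified by the factor $\prod(\|W_k\|_\infty p_k)\le \prod p_k$ through the remaining layers. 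Summing over the $L+1$ places where a change can be introduced yields a bound of the shape $\|f-f'\|_\infty \le \eta\,(L+1)\,V^{c}$ for a small constant $c$; choosing $\eta$ so that this equals $\delta$ gives $\eta \asymp \delta/((L+1)V^{c})$. I expect the bookkeeping of exactly which power of $V$ appears — the paper claims $V^2$ inside the log, i.e.\ the bound $\eta = \delta/(2(L+1)V^2)$ — to be the main obstacle; one has to be careful that the per-layer output bounds and the propagation factors multiply out to no worse than $V^2$, using $p_\ell+1 \ge 2$ and that the product telescopes cleanly.

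Putting the pieces together: the resulting net has cardinality at most (number of sparsity patterns) $\times$ (number of grid tuples) $\le V^s \cdot (2/\eta)^s = \big(2V/\eta\big)^s$, and with $1/\eta = 2(L+1)V^2/\delta$ this is at most $\big(4\delta^{-1}(L+1)V^3\big)^s$, hence
\begin{align*}
	\log \mathcal{N}\big(\delta, \mF(L,\bp,s,\infty), \|\cdot\|_\infty\big) \le s \log\big(4\delta^{-1}(L+1)V^3\big).
\end{align*}
A slightly sharper accounting of the constants and powers — absorbing the sparsity-pattern count into the value-grid count by noting that a zero coordinate is itself a valid grid point, so one really only needs $(s+1)$ factors total rather than an $s$-fold product times $V^s$ — gives the stated form $(s+1)\log\big(2\delta^{-1}(L+1)V^2\big)$. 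I would present the clean version directly: enumerate networks by giving, for each of $s+1$ "slots", either the index of an active parameter and its quantized value or a null symbol, check that this covers $\mF(L,\bp,s,\infty)$ up to $\|\cdot\|_\infty$-error $\delta$ via the Lipschitz estimate, and count: at most $\big(V \cdot 2\delta^{-1}(L+1)V\big)^{s+1} = \big(2\delta^{-1}(L+1)V^2\big)^{s+1}$ networks. The only genuinely delicate step remains the propagation-of-error estimate and verifying the exponent on $V$; everything else is counting.
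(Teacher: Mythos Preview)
Your approach is essentially the same as the paper's: a telescoping Lipschitz estimate for the perturbation of network parameters, combined with counting sparsity patterns and discretizing the active values. The paper's proof makes precise exactly the step you flag as delicate: writing $f = A_{k+1}^- f \circ \sigma_{\bv_k} W_{k-1} \circ A_{k-1}^+ f$, it bounds $|A_k^+ f(\bx)|_\infty \le \prod_{\ell<k}(p_\ell+1)$ and the Lipschitz constant of $A_k^- f$ by $\prod_{\ell\ge k-1} p_\ell$, so the telescoping sum yields $|f(\bx)-f^*(\bx)| \le \eps(L+1)V$ with a single factor of $V$; the second factor of $V$ in the final bound comes from the sparsity-pattern count $\binom{T}{s}\le V^s$, resolving your uncertainty about the exponent. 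Your first pass producing $V^3$ is only an overcount in the bookkeeping; your ``slot'' encoding at the end and the paper's sum $\sum_{s^*\le s}(\cdot)^{s^*}\le(\cdot)^{s+1}$ are equivalent ways to arrive at the $(s+1)$-th power.
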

For a proof see the supplement. A related result is Theorem 14.5 in \cite{anthony1999}. 

\begin{rem}
\label{rem.better_bound_entropy}
Identity \eqref{eq.removal_nodes_identity} applied to Lemma \ref{lem.entropy} yields,
\begin{align*}
	\log \mathcal{N}\Big( \delta, \mF(L, \bp, s, \infty), \|\cdot \|_\infty \Big)
	\leq (s+1) \log \Big( 2^{2L+5} \delta^{-1} (L+1) p_0^2p_{L+1}^2 s^{2L} \Big).
\end{align*}
\end{rem}

\begin{proof}[Proof of Theorem \ref{thm.oracle_ineq}]
The assertion follows from Lemma \ref{lem.entropy} with $\delta =1/n,$ Lemma \ref{lem.oracle_gen} and Remark \ref{rem.better_bound_entropy} since $F\geq 1.$
\end{proof}

\subsection{Proof of Theorem \ref{thm.lb}}

Throughout this proof, $\|\cdot \|_2=\|\cdot \|_{L^2[0,1]^d}.$ By assumption there exist positive $\gamma \leq \Gamma$ such that the Lebesgue density of $\bX$ is lower bounded by $\gamma$ and upper bounded by $\Gamma$ on $[0,1]^d.$ For such design, $R(\widehat f_n, f_0) \geq \gamma \|\widehat f_n - f_0\|_2^2.$ Denote by $P_f$ the law of the data in the nonparametric regression model \eqref{eq.mod}. For the Kullback-Leibler divergence we have $\KL(P_f, P_g)= n E[(f(\bX_1) -g(\bX_1))^2] \leq \Gamma n \|f-g\|_2^2.$ Theorem 2.7 in \cite{tsybakov2009} states that if for some $M\geq 1$ and $\kappa>0,$ $f_{(0)}, \ldots, f_{(M)} \in \mG(q, \bd, \bt, \bbeta, K)$ are such that 
\begin{compactitem}
\item[(i)] $\|f_{(j)}-f_{(k)}\|_2\geq \kappa \sqrt{\phi_n}$ for all $0\leq j < k\leq M$
\item[(ii)] $n\sum_{j=1}^M \|f_{(j)} - f_{(0)}\|_2^2 \leq M\log (M)/(9\Gamma),$
\end{compactitem}
then there exists a positive constant $c=c(\kappa, \gamma),$ such that 
\begin{align*}
	\inf_{\widehat f_n} \, \sup_{f_0 \in \mG(q, \bd, \bt, \bbeta, K)} R\big( \widehat f_n, f_0\big) \geq c \phi_n.
\end{align*} 
In a next step, we construct functions $f_{(0)}, \ldots, f_{(M)} \in \mG(q, \bd, \bt, \bbeta, K)$ satisfying $(i)$ and $(ii).$ Define $i^* \in \argmin_{i=0, \ldots, q } \beta_i^*/(2\beta_i^*+ t_i).$ The index $i^*$ determines the estimation rate in the sense that $\phi_n = n^{-2\beta_{i^*}^*/(2\beta_{i^*}^*+ t_{i^*})}.$ For convenience, we write $\beta^*:= \beta_{i^*},$ $\beta^{**}:= \beta_{i^*}^*,$ and $t^*:=t_{i^*}.$ One should notice the difference between $\beta^*$ and $\beta^{**}.$ Let $K \in L^2(\mathbb{R}) \cap \mC_1^{\beta^*}(\mathbb{R},1)$ be supported on $[0,1].$  It is easy to see that such a function $K$ exists. Furthermore, define $m_n := \lfloor \rho n^{1/(2\beta^{**}+t^*)} \rfloor$ and $h_n :=1/m_n$ where the constant $\rho$ is chosen such that $nh_n^{2\beta^*+t^*} \leq 1/(72 \Gamma \|K^B\|_2^{2t^*})$ with $B:= \prod_{\ell=i^*+1}^q (\beta_\ell\wedge 1).$ For any $\bu = (u_1, \ldots, u_{t^*}) \in \mathcal U_n :=\{(u_1, \ldots, u_{t^*}) : u_i \in \{0, h_n, 2h_n, \ldots, (m_n-1)h_n\},$ define
\begin{align*}
	\psi_{\bu}(x_1, \ldots, x_{t^*}) := h_n^{\beta^*}\prod_{j=1}^{t^*} K\Big(\frac{x_j-u_j}{h_n}\Big).
\end{align*}
For $\balpha$ with $|\balpha|<\beta^*,$ we have $\| \partial^{\balpha} \psi_{\bu} \|_\infty \leq 1$ using $K \in \mC_1^{\beta^*}(\mathbb{R},1).$ For $\balpha=(\alpha_1,\ldots, \alpha_{t^*})$ with $|\balpha|=\lfloor \beta^*\rfloor,$ triangle inequality and $K \in \mC_1^{\beta^*}(\mathbb{R},1)$ gives
\begin{align*}
	h_n^{\beta^*-\lfloor \beta^*\rfloor} \frac{|\prod_{j=1}^{t^*} K^{(\alpha_j)}(\tfrac{x_j-u_j}{h_n}) - \prod_{j=1}^{t^*} K^{(\alpha_j)}(\tfrac{y_j-u_j}{h_n}) |}{\max_{i} |x_i-y_i|^{\beta^*-\lfloor \beta^*\rfloor }}\leq t^*. 
\end{align*}
Hence $\psi_{\bu} \in \mC_{t^*}^{\beta^*}([0,1]^{t^*}, (\beta^*)^{t^*} t^*).$ For a vector $\bw=(w_{\bu})_{\bu \in \mathcal U_n} \in \{0,1\}^{|\mathcal U_n|},$ define 
\begin{align*}
	\phi_{\bw} = \sum_{\bu \in \mathcal U_n} w_{\bu} \psi_{\bu}.
\end{align*}
By construction, $\psi_{\bu}$ and $\psi_{\bu'},$ $\bu,\bu'\in \mathcal U_n ,$ $\bu \neq \bu'$ have disjoint support. As a consequence $\phi_{\bw} \in \mC_{t^*}^{\beta^*}([0,1]^{t^*}, 2(\beta^*)^{t^*} t^*).$

For $i< i^*,$ let $g_i(\bx):= (x_1, \ldots, x_{d_i})^\top.$ For $i=i^*$ define $g_{i^*, \bw}(\bx) = (\phi_{\bw}(x_1, \ldots, x_{t_{i^*}}), 0,\ldots, 0)^\top.$ For $i>i^*,$ set $g_i(\bx) := (x_1^{\beta_i \wedge 1}, 0, \ldots, 0)^\top.$ Recall that $B = \prod_{\ell=i^*+1}^q (\beta_\ell\wedge 1).$ We will frequently use that $\beta^{**} =\beta^* B.$ Because of $t_i \leq \min(d_0, \ldots, d_{i-1})$ and the disjoint supports of the $\psi_{\bu},$  
\begin{align*}
	f_{\bw}(\bx)
	& =g_q \circ \ldots \circ g_{i^*+1} \circ g_{i*,\bw} \circ g_{i^*-1} \circ \ldots \circ g_0(\bx) \\
	& = \phi_{\bw}(x_1, \ldots, x_{t_{i^*}})^B\\
	& =\sum_{\bu \in \mathcal U_n}  w_{\bu} \psi_{\bu}(x_1, \ldots, x_{t_{i^*}})^B
\end{align*}
and $f_{\bw} \in \mG(q, \bd, \bt, \bbeta, K)$ provided $K$ is taken sufficiently large. 

For all $\bu,$ $\|\psi_{\bu} \|_2^2 = h_n^{2\beta^{**} +t^*} \|K^B\|_2^{2t^*}.$ If $\Ham(\bw,\bw')=\sum_{\bu \in \mathcal U_n} \mathbf{1}(w_{\bu} \neq w_{\bu'})$ denotes the Hamming distance, we  find $$\|f_{\bw} - f_{\bw'}\|_2^2 = \Ham(\bw,\bw') h_n^{2\beta^{**} +t^*} \|K^B\|_2^{2t^*}.$$ By the 
Varshamov - Gilbert bound (\cite{tsybakov2009}, Lemma 2.9) and because of $m_n^{t^*} = |\mathcal U_n|,$ we conclude that there exists a subset $\mW \subset \{0,1\}^{m_n^{t^*} }$ of cardinality $| \mW| \geq 2^{m_n^{t^*}/8},$ such that $\Ham(\bw,\bw') \geq m_n^{t^*} /8$ for all $\bw, \bw' \in \mW,$ $\bw \neq \bw'.$ This implies that for $\kappa = \|K^B\|_2^{t^*}/(\sqrt{8}\rho^{\beta^{**}}),$
\begin{align*}
	\|f_{\bw} - f_{\bw'}\|_2^2 \geq \frac 18  h_n^{2\beta^{**}} \|K^B\|_2^{2t^*} \geq \kappa^2 \phi_n \quad \text{for all} \ \bw, \bw' \in \mW, \ \ \bw \neq  \bw'.
\end{align*}
Using the definition of $h_n$ and $\rho,$
\begin{align*}
	n\|f_{\bw} - f_{\bw'}\|_2^2 \leq n m_n^{t^*} h_n^{2\beta^{**}+t^*} \|K^B\|_2^{2t^*} \leq \frac{m_n^{t^*}}{72 \Gamma} \leq \frac{\log |\mW|}{9\Gamma}.
\end{align*}
This shows that the functions $f_{\bw}$ with $\bw \in \mW$ satisfy $(i)$ and $(ii).$ The assertion follows. \qed

\subsection{Proof of Lemma \ref{lem.approx}}

We will choose $c_2 \leq 1.$ Since $\|f_0\|_\infty\leq K,$ it is therefore enough to consider the infimum over $\mF(L,\bp, s, F)$ with $F=K+1.$ Let $\widetilde f_n$ be an empirical risk minimizer. Recall that $\Delta_n(\widetilde f_n, f_0) =0.$ Because of the minimax lower bound in Theorem \ref{thm.lb}, there exists a constant $c_3$ such that $c_3n^{-2\beta/(2\beta+d)}\leq \sup_{f_0 \in \mC_1^\beta([0,1], K)} R(\widetilde f_n, f_0)$ for all sufficiently large $n.$ Because of $p_0=d$ and $p_{L+1}=1,$ Theorem \ref{thm.oracle_ineq} yields 
\begin{align*}
	&c_3n^{-2\beta/(2\beta+d)}
	\leq \sup_{f_0 \in \mC_d^\beta([0,1], K)} R(\widetilde f_n, f_0) \\
	&\leq 
	4\sup_{f_0 \in \mC_d^\beta([0,1], K)} \, \inf_{f\in \mF(L, \bp, s, K+1)} \big\| f - f_0\big\|_\infty^2 + C (K+1)^2\frac{(s+1) \log (n(s+1)^{L}d)}{n}
\end{align*}
for some constant $C.$ Given $\eps,$ set $n_\eps := \lfloor (\sqrt{8}\eps/\sqrt{c_3})^{-(2\beta+d)/\beta}\rfloor.$ Observe that for $\eps\leq \sqrt{c_3/8},$ $n_\eps^{-1}\leq 2 (\sqrt{8}\eps/\sqrt{c_3})^{(2\beta+d)/\beta}$ and $8\eps^2/c_3 \leq n_\eps^{-2\beta/(2\beta+d)}.$ For sufficiently small $c_2>0$ and all $\eps \leq c_2,$ we can insert $n_\eps$ in the previous inequality and find
\begin{align*}
	8 \eps^2 
	\leq 4\sup_{f_0 \in \mC_d^\beta([0,1], K)} \, \inf_{f\in \mF(L, \bp, s, K+1)} \big\| f - f_0\big\|_\infty^2
	+ C_1 \eps^{\frac{2\beta+d}{\beta}} s \big(\log(\eps^{-1}s^L) + C_2\big)
\end{align*}
for constants $C_1, C_2$ depending on $K, \beta,$ and $d.$ The result follows using the condition $s\leq c_1\eps^{-d/\beta}/(L\log(1/\eps))$ and choosing $c_1$ small enough. \qed

\subsection{Proofs for Section \ref{sec.wavelets}}

\begin{proof}[Proof of Lemma \ref{lem.wav_decay}]
 Denote by $r$ the smallest positive integer such that $\mu_r:=\int x^r \psi(x) dx \neq 0.$ Such an $r$ exists because $\{x^r: r=0,1,\ldots\}$ spans $L^2[0,A]$ and $\psi$ cannot be constant. If $h \in L^2(\mathbb{R})$, then we have for the wavelet coefficients
\begin{align}
	&\int h(x_1+\ldots + x_d) \prod_{\ell=1}^d \psi_{j,k_\ell}(x_\ell) \, d\bx \notag \\
	&= 2^{-\frac{jd}2} \int_{[0,2^q]^d} h\Big(2^{-j}\Big(\sum_{\ell=1}^d x_\ell+k_\ell\Big)\Big)  \prod_{\ell=1}^d \psi(x_\ell) \, d\bx.
	\label{eq.wav_coeff_identity}
\end{align}
For a real number $u,$ denote by $\{u\}$ the fractional part of $u.$ 

We need to study the cases $\mu_0\neq 0$ and $\mu_0= 0$ separately. If $\mu_0\neq 0,$ define $g(u)= r^{-1} u^r\mathbf{1}_{[0,1/2]}(u)+r^{-1} (1-u)^r\mathbf{1}_{(1/2,1]}(u).$ Notice that $g$ is Lipschitz with Lipschitz constant one. Let $h_{j,\alpha}(u) = K 2^{-j\alpha-1}g(\{2^{j-q-\nu} u\})$ with $q$ and $\nu$ as defined in the statement of the lemma. For a $T$-periodic function $u\mapsto s(u)$ the $\alpha$-H\"older semi-norm for $\alpha\leq 1$ can be shown to be $\sup_{u \neq v, |u-v|\leq T} |s(u)-s(v)|/|u-v|^\alpha.$  Since $g$ is $1$-Lipschitz, we have for $u,v$ with $|u-v| \leq 2^{q+\nu-j},$ 
\begin{align*}
	\big | h_{j,\alpha}(u) - h_{j,\alpha}(v) \big| \leq K2^{-j\alpha-1} 2^{j-q-\nu} |u-v| \leq \frac{K}2 |u-v|^\alpha. 
\end{align*}
Since $\|h_{j,\alpha}\|_\infty \leq K/2,$  $h_{j, \alpha} \in \mC_1^\alpha([0,d], K).$ Let $f_{j,\alpha}(\bx)=h_{j,\alpha}(x_1+\ldots + x_d).$ Recall that the support of $\psi$ is contained in $[0,2^q]$ and $2^\nu \geq 2d.$ By definition of the wavelet coefficients, Equation \eqref{eq.wav_coeff_identity}, the definitions of $h_{j, \alpha},$ and using $\mu_r =\int x^r \psi(x) dx,$ we find for $p_1,\ldots, p_d \in \{0,1, \ldots, 2^{j-q-2}-1\},$
\begin{align*}
	&d_{(j,2^{q+\nu} p_1)\ldots (j,2^{q+\nu} p_d)}(f_{j,\alpha})\\
	&= 2^{-\frac{jd}2} \int_{[0,2^q]^d} h_{j,\alpha}\Big(2^{-j}\Big(\sum_{\ell=1}^d x_\ell+2^{q+\nu} p_\ell\Big)\Big) \prod_{\ell=1}^d  \psi(x_\ell) \, d\bx\\
	&= K 2^{-\frac{jd}2-j\alpha-1} \int_{[0,2^q]^d} g\Big(\Big\{\frac{\sum_{\ell=1}^d x_\ell}{2^{q+\nu}}\Big\}\Big) \prod_{\ell=1}^d \psi(x_\ell) \, d\bx\\
	&= r^{-1} 2^{-qr-\nu r-1} K 2^{-\frac{j}2 (2\alpha+d)} \int_{[0,2^q]^d}  (x_1+\ldots +x_d)^r \prod_{\ell=1}^d \psi(x_\ell) \, d\bx\\
	&=  d r^{-1} 2^{-qr-\nu r-1} K \mu_0^{d-1} \mu_r 2^{-\frac{j}2 (2\alpha+d)},
\end{align*}
where we used for the last identity that by definition of $r,$ $\mu_1=\ldots=\mu_{r-1}=0.$

In the case that $\mu_0=0,$ we can take $g(u)= (dr)^{-1} u^{dr}\mathbf{1}_{[0,1/2]}(u)+(dr)^{-1} (1-u)^{dr}\mathbf{1}_{(1/2,1]}(u).$ Following the same arguments as before and using the multinomial theorem, we obtain 
\begin{align*}
	d_{(j,2^{q+\nu} p_1)\ldots (j,2^{q+\nu} p_r)}(f_{j,\alpha})
	= \binom{dr}{r} \frac 1{dr} 2^{-dqr-d\nu r-1} K \mu_r^d 2^{-\frac{j}2 (2\alpha+d)}.
\end{align*}
The claim of the lemma follows. 
\end{proof}

\begin{proof}[Proof of Theorem \ref{thm.wavelet_lb}]
Let $c(\psi, d)$ be as in Lemma \ref{lem.wav_decay}. Choose an integer $j^*$ such that $$\frac 1n \leq c(\psi, d)^2 K^2 2^{-j^*(2\alpha+d)}\leq \frac{2^{2\alpha +d}}n.$$ This means that $2^{j^*} \geq \tfrac 12 ( c(\psi, d)^2K^2 n)^{1/(2\alpha+d)}.$ By Lemma \ref{lem.wav_decay}, there exists a function $f_{j^*,\alpha}$ of the form $h(x_1 + \ldots + x_d),$ $h \in \mC_1^\alpha([0,d],K),$ such that with \eqref{eq.wav_est_risk_lb},
\begin{align*}
	R(\widehat f_n, f_{j^*,\alpha}) 
	\geq \sum_{p_1, \ldots, p_d \in \{0,1, \ldots, 2^{j^*-q-\nu}-1\}} \frac{1}{n}
	= \frac 1n 2^{j^*d-qd-\nu d} \gtrsim n^{-\frac{2\alpha}{2\alpha+d}}.
\end{align*}
\end{proof}

\section*{Acknowledgments}
The author is grateful to all the insights, comments and suggestions that arose from discussions on the topic with other researchers. In particular, he wants to thank the AE, two referees, Thijs Bos, Hyunwoong Chang, Konstantin Eckle, Kenji Fukumizu, Maximilian Graf, Roy Han, Masaaki Imaizumi, Michael Kohler, Matthias L\"offler, Patrick Martin, Hrushikesh Mhaskar, Gerrit Oomens, Tomaso Poggio, Richard Samworth, Taiji Suzuki, Dmitry Yarotsky and Harry van Zanten.

\newpage 

\setcounter{section}{0}

\begin{frontmatter}

\title{Rejoinder to discussions of "Nonparametric regression using deep neural networks with ReLU activation function"}
\runtitle{Nonparametric regression using ReLU networks}


\author{\fnms{Johannes} \snm{Schmidt-Hieber}}
\address{University of Twente,\\ P.O. Box 217, \\ 7500 AE Enschede \\ The Netherlands\\ \printead{e1}}
\affiliation{University of Twente}

\runauthor{J. Schmidt-Hieber}



\end{frontmatter}

The author is very grateful to the discussants for sharing their viewpoints on the article. The discussant contributions highlight the gaps in the theoretical understanding and outline many possible directions for future research in this area. The rejoinder is structured according to topics. We refer to [GMMM], [K], [KL], and [S] for the discussant contributions by Ghorbani et al., Kutyniok, Kohler \&  Langer, and Shamir, respectively. 

\section{Overparametrization and implicit regularization}


One of the general claims about deep learning is that even for extreme overfitting the method still generalizes well. There are numerous experiments showing that running the training error to zero and therefore interpolating all data points results in state of the art generalization performance. The rationale behind this is that among all solutions interpolating the data points - of which most result in bad generalization behavior - stochastic gradient descent (SGD) picks a minimum norm interpolant. This is also known as implicit regularization. While this is well known for stochastic gradient descent applied to linear regression, for deep networks some progress has been made recently in finding the norm minimized by (S)GD, see \cite{2018arXiv180208246G, Soudry2019}. 

It is now reasonable to wonder whether the notion of network sparsity could be removed in the article if implicit regularization would have been taken into account. [GMMM] write that "model complexity is not controlled by an explicit penalty or procedure, but by the dynamics of stochastic gradient descent (SGD) itself." [S] mentions implicit regularization to show that statistical guarantees should involve specific learning methods. 

We conjecture that for additive error models, such as the nonparametric regression model considered in the article, implicit regularization in the overfitted regime is insufficient to achieve even consistency. To support our conjecture, we provide the following two step argument. In the first step, we argue that for one-dimensional input and shallow networks with fixed parameters in the first layer, SGD will converge to a variant of the natural cubic spline interpolant. In the second step we show that this reconstruction leads to an inconsistent estimator if additive noise is present. 


A shallow ReLU network with one input and one output node can be written as $x\mapsto \sum_{j=1}^m a_j(b_jx-c_j)_+.$ We now study an even more simplified setup where $b_j$ is always one. For small $\delta >0,$ $(x-c_j)_+ \approx \int_{c_j}^{c_j+\delta} (x-u)_+du/\delta.$ This motivates to study smoothed shallow ReLU networks of the form 
\begin{align*}
	x\mapsto f_{\ba}(x)=\sum_{j=1}^m \frac{a_j}{\sqrt{t_j-t_{j-1}}} \int_{t_{j-1}}^{t_j} (x- u)_+du.
\end{align*}
with parameter vector $\ba = (a_1, \ldots, a_m)$ and fixed $t_0< t_1 < \ldots < t_m.$ For convenience, we have rescaled the parameters $a_j$ so that the normalization factor becomes $1/\sqrt{t_j-t_{j-1}}.$ We consider the overparametrized regime $m\geq n$ assuming that for any $i,$ there lies at least one $t_j$ in the interval $[X_{(i-1)},X_{(i)})$ with $X_{(i)}$ the $i$-th order statistic of the sample $X_1,\ldots,X_n$ and $X_{(0)}=-\infty.$ Under overparametrization, this is a rather weak assumption and ensures existence of a shallow ReLU network $f_{\ba}^*$ perfectly interpolating the data in the sense that $f_{\ba}^*(X_i)=Y_i$ for all $i.$

For initialization at zero and properly chosen learning rate, SGD with respect to the least squares loss converges to the minimum norm interpolant with parameter vector
\begin{align*}
	\ba^* = \argmin_{\ba \in \R^m}\big\{ \| \ba \|_2: f_{\ba}(X_i)=Y_i, \ \forall i \big\}
\end{align*}
(this result is due to \cite{MR2500924} for overdetermined linear systems but can be extended to the underdetermined case, see also the generalizations in \cite{gower2015stochastic,MR3432148, 2018arXiv180208246G}). Because of $f_{\ba}''(x)=a_j /\sqrt{t_j-t_{j-1}}$ for all $x\in (t_{j-1}, t_j),$ we find $\| \ba \|_2 = \|f_{\ba}''\|_{L^2[t_0,t_m]}.$ It is known that the natural cubic spline interpolant $L$ is the interpolant with the smallest $L^2$-norm on the second derivative. Moreover, we have that $\|f''\|_{L^2}^2 = \|L''\|_{L^2}^2+ \|L''-f''\|_{L^2}^2$ for all twice differentiable interpolating functions $f$, see Equation (2.9) in \cite{MR1270012}. Since $f_{\ba^*}$ and $L$ are both interpolants, this implies that the SGD limit $f_{\ba^*}$ will be close to the natural cubic spline interpolant.

In the nonparametric regression model with additive errors, the distance between the true function values and the response variables $Y_i$ is of the order of the noise level (which is assumed to be fixed). The natural cubic spline interpolates the $Y_i$'s. If in a neighborhood, the $Y_i$'s lie all on one side of the regression function, the average distance between the natural cubic spline interpolant and the true regression function will be lower bounded by a constant. Since this happens on a subset with Lebesgue measure bounded from below, the natural cubic spline interpolant is inconsistent for estimating the regression function. As the SGD limit approximates the natural cubic spline interpolant, this indicates that stochastic gradient descent should lead to inconsistent estimators.

We believe that this also holds true for deep networks. In this case, it is expected that SGD still converges to a spline interpolant but not necessarily to the natural cubic spline interpolant, see also \cite{2019arXiv190205040S} for a related argument. 

While it has been observed that there are nonparametric estimators that can interpolate and also achieve fast convergence rates in the nonparametric regression model (\cite{BelkinRT2019}), the argument above indicates that implicit regularization in the overfitted regime will not do that. To obtain rate optimal estimators, more regularization has to be imposed forcing the network to do smoothing. 

\section{Network sparsity}

The article identifies sparsity of the network weights as a complexity measure to achieve optimal convergence rates under a hierarchical composition assumption. As sparsity is a non-standard assumption, there are several comments on it in the reports. [GMMM] show that the empirical distribution of the weights in the first fully connected layer of the VGG-19 network is nearly Gaussian. [KL] mention a recent result proving optimal estimation rates for very deep networks with fully connected layers. 

After the original version of this article was drafted, a large body of applied work emerged dealing either with compression through sparsifying dense networks or proposing methods that directly train a sparse neural network. Below we briefly summarize some of these approaches.

One method to achieve sparsity in neural networks is by pruning a fully connected network after training. A simple approach would be to replace small network weights by zero, but more sophisticated approaches based on the second derivative have been proposed as well, see \cite{NIPS1989_250, NIPS1992_647,NIPS2015_5784}. \cite{frankle2018lottery} proposes an iterative pruning procedure, see also \cite{2019arXiv190209574G}. These approaches allow to reduce the number of parameters in fully connected layers by about $90\%$ without loss of efficiency. 


Although Theorem 1 is formulated in terms of network sparsity, the proof explicitly constructs a network topology, that is, the graph structure defined by the non-zero connections between successive layers, for which the minimax estimation rate is attained (up to log-factors). Instead of searching over all $s$-sparse networks, it is therefore in principle possible to start with this network topology and only learn the non-zero weights. By fixing one sparse network topology, a lot of the flexibility of networks to adapt to the underlying structure in the data might be lost. An intermediate constraint would be to impose an individual sparsity parameter for each weight matrix or to bound the indegree and outdegree for each individual unit in the network. In the applied literature, choosing a sparse network topology beforehand has been proposed recently in \cite{Prabhu2018, 2018arXiv180905242R}. The latter article makes an interesting connection between sparsely connected neural networks and decision trees. Related to an initial choice of a sparse network topology is the evolutionary algorithm inspired by biological neural networks proposed in \cite{Mocanu2018}. It starts with sparse weight matrices. In every iteration, the smallest weights are removed and new random connections are added so that the network topology changes but the overall network sparsity is kept constant. The method proposed in \cite{2019arXiv190311257A} is also inspired by the sparsity observed in biological networks. It starts with a sparse network topology and increases the sparsity by only keeping the units in each hidden layer that channel most of the signal to the next layer. 

The recent work \cite{GaierHa2019} on weight agnostic neural networks takes this one step further. No training is done and the weights are fixed to the initialized values at all times. Only the network topology is learned by an iterative procedure. In each step of the iteration, we have a set of candidate models. For each of those models a score is computed. "Around" the models with the highest scores a new set of randomly generated candidate models is generated. 

Theorem 1 in [KL] considers neural networks with fixed width and depth increasing polynomially in the sample size. It is shown that for such extremely deep networks, the empirical risk minimizer over fully-connected layers achieves the optimal estimation rate and no sparsity is needed. Such architectures are, however, in many aspects quite different compared to the neural networks considered in practice. In \cite{yarotsky18a}, it has been observed that for such extremely deep networks, one needs discontinuous weight assignments to achieve the best possible approximation rate. This is a strange phenomenon which could hint at some issues with the stability during learning of the network weights.

\section{Classification and nonparametric regression}

While the article deals with data from the nonparametric regression model, the overwhelming part of the literature on deep learning is on classification. Nonparametric regression and estimation of the conditional class probabilities in classification is similar, if a fraction of the data is mislabeled which prevents the conditional class probabilities to be close to zero or one. For the commonly considered classification tasks in deep learning, this is, however, not the case as most of the data are correctly labeled. As the randomness due to mislabeling is negligible in those cases, the only remaining randomness is in the distribution of the design/inputs and reconstruction becomes rather an interpolation than a denoising problem. If the different classes are also well separated from each other, much faster convergence rates can be achieved. This explains why the sample complexity in the nonparametric regression model is much higher than what is observed in deep learning for object recognition tasks, see also Report [S].

Concerning the statistical properties there are some differences. For image classification problems, deep learning is for instance not robust to Gaussian perturbations, see \cite{hendrycks2018benchmarking}. In the nonparametric regression model, Gaussian perturbations just increase the noise level. Since the noise level  appears in the estimation risk bounds through the constants, the estimation rates for the class of estimators considered in the article will not change under additive noise perturbations. 

We would like to stress again that the structure of the data is essential for the behavior of deep learning and the properties of the reconstructions. One of the challenges for future research will be to study estimation in models beyond nonparametric regression.

\section{Algorithms}

[GMMM] and [S] question whether one can disentangle the algorithm from the statistical analysis. We would like to stress that Theorem 1 is not about one fixed estimator. It provides bounds for any estimator which, given data, returns a sparsely connected neural network. The method/estimator determines the term $\Delta_n(\widehat f_n,f_0)$ defined in Equation (5) and Theorem 1 shows that $\Delta_n(\widehat f_n,f_0)$ tightly controls the estimation risk from above and below. This is different than the case of data interpolation and training error zero, where $\Delta_n(\widehat f_n,f_0)$ is not sufficient anymore to fully characterize the statistical properties, see also Report [S] and \cite{2016arXiv161103530Z}. 

We agree that the difficulty is shifted to a precise estimate of the term $\Delta_n(\widehat f_n,f_0)$ and we hope to study this term in more detail in future work. This term might heavily depend on the learning rate, the initialization and the energy landscape. Regarding a question in [K], the expectation in the definition of $\Delta_n(\widehat f_n,f_0)$ (Equation (5) in the article) can be taken over all the randomness, including additional randomization in the algorithm. 

While it would be desirable to have precise theoretical bounds for the performance of the most popular deep learning methods such as Adam, we believe that some amount of idealization and simplification is unavoidable. In statistical theory, this seems to be widely accepted. For instance, most of the theory on the LASSO deals with regularization parameters derived from large deviations bounds although the standard software implementations choose the regularization parameter by ten-fold cross validation.


\section{High-dimensional input}

[GMMM] mentions that for the current proof strategy and the case of additive models, the dependence of the dimension on the constants is $d^d.$ As mentioned in the article, the results focus on the convergence rates and no attempt has been made to minimize the constants appearing in the proofs. In fact by a variation of the original argument, the dependence on the dimension for additive models $f(\bx)=\sum_{i=1}^d f_i(x_i)$ can be shown to be linear. To see this, we can build for any given $N\geq 1,$ $d$ separate networks with $s \asymp N \log N$ parameters, computing the functions $f_1(x_1), \ldots, f_d(x_d)$ up to an approximation error of the order $O(N^{-\beta}).$ Using the parallelization rule mentioned on p.21 of the article, one can then combine the individual networks into a large neural network computing the sum $\sum_{i=1}^n f_i(x_i)$ up to an approximation error of order $O(dN^{-\beta})$ using $s \asymp dN\log N$ many network parameters. It then follows from Theorem 2 that the rate is upper bounded by $dn^{-2\beta/(2\beta+1)}\log^3 n$ if $\Delta_n(\widehat f_n,f_0)$ is sufficiently small and $d$ is bounded by a power of the sample size. 

As another result on high-dimensional input, [S] mentions a theorem proving that basis expansions have difficulties to approximate functions generated by a single neuron. Either huge coefficients are needed or the number of basis functions has to be exponential in the input dimension. 

Since the input dimension $d$ in deep learning applications is typically extremely large, a possible future direction would be to analyze neural networks with high-dimensional $d=d_n \uparrow \infty$ and comparing the rates to other nonparametric procedures. 

\section{Function classes}

With respect to the considered function class, [K] emphasizes that the function classes should be detached from the method. On the contrary, [S] favors an alternative approach where the underlying function class consists itself of neural network functions. We believe that both approaches have advantages and disadvantages. 

The imposed class of composition functions in the article appears of course naturally given the composition structure of deep networks. Compositions are fundamental operations and as mentioned in the article, many widely studied function classes in nonparametric statistics such as (generalized) additive models occur as special cases of the imposed composition constraint.

For a recent result in the statistical literature with function class consisting of neural network functions, we refer to \cite{2018arXiv180903090B}. One possibility for future research would be to determine the maxisets for neural networks, that is, the largest possible function class for which a prespecified estimation rate can be obtained, see \cite{MR1895892}. The main advantage of generic function spaces such as H\"older classes is that we can compare the estimation rates achieved by different methods and therefore learn something about the strength and weaknesses of these methods. The article shows for instance that wavelet methods have a slower rate of convergence for generalized additive models than sparsely connected deep ReLU networks.

To obtain fast estimation rates, an alternative is to impose structure on the design, see \cite{2019arXiv190702177N,2019arXiv190800695S}.

\section{Choice of the activation function}

On p. 12 in the article we highlight several specific properties of the ReLU activation function such as the possibility to easily learn skip connections. [KL] mention that results for ReLU networks automatically transfer to other activation functions. The argument, however, requires that the network parameters will become large. In the meantime, we  better and better understand how SGD leads to norm control on the parameters. To model this, we think that it is important to control the magnitude of the weights in the network classes. In the article, the network parameters are bounded in absolute value by one. This is a convenient choice, but as our understanding of the norm control induced by SGD improves, more realistic constraints are imaginable. It is well-known that training does not move the parameters far from the initialized values. To analyze the effect of different initialization strategies one possibility would be to study network classes generated by all parameters in a neighborhood of a (random) initializer.

\section{Real data}

[GMMM] report the results of a simulation study which seemingly contradict the theory in the article. They study the noisefree case and up to three hidden layers showing that a certain smooth function cannot be learned. We would like to refer to the simulation study in \cite{ECKLE2019232}, which finds that for regression problems, the performance of deep neural networks is not far off from the theoretical bounds. This article also examines the finite sample performance of the multiplication network in Lemma A.2 which forms an essential part in the proof of Theorem 1. To a certain extent, even such specific constructions can be picked up by deep learning. This, however, only works for a careful initialization. It might be necessary to reinitialize the procedure if the algorithm gets stuck in a local minimum with large training error.

\section*{Acknowledgments}
The author would like to thank Misha Belkin and Dirk Lorentz for fruitful discussions on overfitting and SGD.

\newpage

\begin{frontmatter}

\title{Supplement to "Nonparametric regression using deep neural networks with ReLU activation function"}
\runtitle{Nonparametric regression using ReLU networks}

\author{\fnms{Johannes} \snm{Schmidt-Hieber}}
\address{University of Twente,\\ P.O. Box 217, \\ 7500 AE Enschede \\ The Netherlands\\ \printead{e1}}
\affiliation{University of Twente}


\runauthor{J. Schmidt-Hieber}

\begin{abstract}
The supplement contains additional material for the article "Nonparametric regression using deep neural networks with ReLU activation function".
\end{abstract}

\end{frontmatter}

\appendix

\section{Network approximation of polynomials}

In this section we describe the construction of deep networks that approximate monomials of the input.

In a first step, we construct a network, with all network parameters bounded by one, which approximately computes $xy$ given input $x$ and $y.$ Let $T^k : [0,2^{2-2k}] \rightarrow [0,2^{-2k}],$
\begin{align*}
	&T^k(x) := (x/2) \wedge (2^{1-2k}-x/2) =  (x/2)_+ - (x -2^{1-2k})_+	
\end{align*}
and $R^k :[0,1]\rightarrow [0,2^{-2k}],$
\begin{align*}
	&R^k:= T^k \circ T^{k-1} \circ \ldots T^1.
\end{align*}
The next result shows that $\sum_{k=1}^m R^k(x)$ approximates  $x(1-x)$ exponentially fast in $m$ and that in particular $x(1-x)= \sum_{k=1}^\infty R^k(x)$ in $L^\infty[0,1].$ This lemma can be viewed as a slightly sharper variation of Lemma 2.4 in \cite{telgarsky2016} and Proposition 2 in \cite{yarotski2017}. In contrast to the existing results, we can use it to build networks with parameters bounded by one. It also provides an explicit bound on the approximation error.


\begin{lem2}
\label{lem.square_approx}
For any positive integer $m,$
\begin{align*}
	\big | x(1-x)- \sum_{k=1}^m R^k(x)  \big | \leq 2^{-m}.
\end{align*}
\end{lem2}

\begin{proof}
In a first step, we show by induction that $R^k$ is a triangle wave. More precisely, $R^k$ is piecewise linear on the intervals $[\ell/2^k,(\ell+1)/2^k]$ with endpoints $R^k(\ell/2^k) = 2^{-2k}$ if $\ell$ is odd and $R^k(\ell/2^k) = 0$ if $\ell$ is even. For $R^1=T^1$ this is obviously true. For the inductive step, suppose this is true for $R^k.$ Write $\ell \equiv r \mod 4$ if $\ell -r$ is divisible by $4$ and consider $x\in [\ell /2^{k+1},(\ell+1)/2^{k+1}].$ If $\ell \equiv 0 \mod 4$ then, $R^k(x) = 2^{-k}(x-\ell/2^{k+1}).$ Similar for $\ell \equiv 2\mod 4,$ $R^k(x) = 2^{-2k} - 2^{-k}(x-\ell/2^{k+1});$ for $\ell \equiv 1 \mod 4,$ we have $\ell+1 \equiv 2 \mod 4$ and $R^k(x) = 2^{-2k-1}+2^{-k} (x-\ell/2^{k+1});$ and for $\ell \equiv 3 \mod 4,$ $R^k(x) =2^{-2k-1} - 2^{-k}(x-\ell/2^{k+1}).$ Together with
\begin{align*}
	R^{k+1}(x) 
	&= T^{k+1}\circ R^k (x) \\
	&= \frac{R^k(x)}  2 \mathbf{1} (R^k(x) \leq 2^{-2k-1})+ \Big( 2^{-2k-1}- \frac{R^k(x)}  2 \Big) \mathbf{1} (R^k(x) > 2^{-2k-1}).
\end{align*}
this shows the claim for $R^{k+1}$ and completes the induction.

For convenience, write $g(x) = x(1-x).$ In the next step, we show that for any $m \geq 1$ and any $\ell \in \{0,1, \ldots, 2^m\},$
\begin{align*}
	g(\ell 2^{-m}) = \sum_{k=1}^m R^k(\ell 2^{-m}).
\end{align*}
We prove this by induction over $m.$ For $m=1$ the result can be checked directly. For the inductive step, suppose that the claim holds for $m.$ If $\ell$ is even we use that $R^{m+1}(\ell 2^{-m-1})=0$ to obtain that $g(\ell 2^{-m-1}) = \sum_{k=1}^m R^k(\ell 2^{-m-1}) = \sum_{k=1}^{m+1} R^k(\ell 2^{-m-1}).$ It thus remains to consider $\ell$ odd. Recall that $x\mapsto \sum_{k=1}^m R^k(x)$ is linear on $[(\ell-1)2^{-m-1}, (\ell+1)2^{-m-1}]$ and observe that for any real $t,$
\begin{align*}
	g(x) - \frac{g(x+t)+g(x-t)}{2} = t^2.
\end{align*}
Using this for $x =\ell 2^{-m-1}$ and $t=2^{-m-1}$ yields for odd $\ell$ due to $R^{m+1}(\ell 2^{-m-1}) \\ =2^{-2m-2},$
\begin{align*}
	g(\ell 2^{-m-1}) = 2^{-2m-2}+\sum_{k=1}^m R^k(\ell 2^{-m-1})  = \sum_{k=1}^{m+1} R^k(\ell 2^{-m-1}).
\end{align*}
This completes the inductive step.

So far we proved that $\sum_{k=1}^m R^k(x)$ interpolates $g$ at the points $\ell 2^{-m}$ and is linear on the intervals $[\ell 2^{-m}, (\ell +1 )2^{-m}].$ Observe also that $g$ is Lipschitz with Lipschitz constant one. Thus, for any $x,$ there exists an $\ell$ such that
\begin{align*}
	\big |g(x) - \sum_{k=1}^m R^k(x) \big| 
	&= \Big| g(x) - (2^m x - \ell) g \big((\ell+1)2^{-m}\big) - (\ell +1 -2^m x) g(\ell 2^{-m}) \Big| \\
	&\leq 2^{-m}.
\end{align*}
\end{proof}

Let $g(x) =x(1-x)$ as in the previous proof. To construct a network which returns approximately $xy$ given input $x$ and $y$, we use the polarization type identity
\begin{align}
	g\Big(\frac{x-y+1}{2}\Big) - g\Big( \frac{x+y}{2} \Big) + \frac{x+y}{2} - \frac 14 = xy,
	\label{eq.polarization}
\end{align}
cf. also \cite{yarotski2017}, Equation (3).

\begin{lem2}
\label{lem.mult}
For any positive integer $m,$ there exists a network $\Mult_m \in \mF(m+4,(2,6,6,\ldots,6,1)),$ such that $\Mult_m(x,y) \in [0,1],$
\begin{align*}
	\big|\Mult_m (x,y) - x y \big| \leq 2^{-m}, \quad \text{for all} \ x,y \in [0,1],
\end{align*}
and $\Mult_m(0,y)=\Mult_m(x,0) =0.$
\end{lem2}

\begin{figure}
\begin{center}
	\includegraphics[scale=0.55]{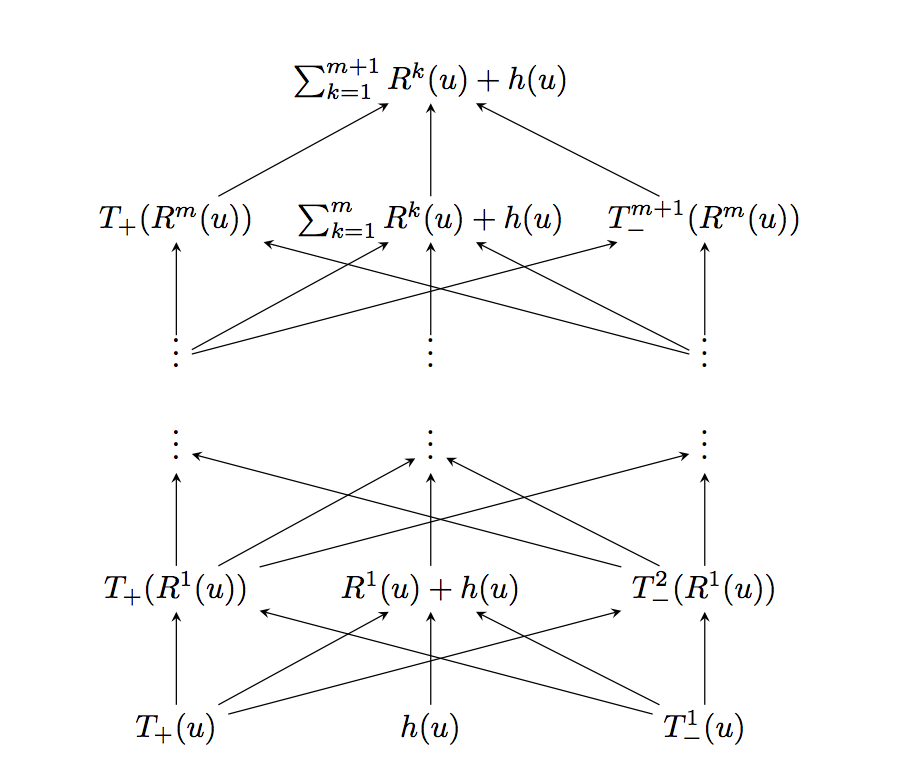} 
\end{center}
\vspace{-0.6cm}
\caption{\label{fig.mult_net} The network $( T_+(u) , T_-^1(u), h(u)) \mapsto \sum_{k=1}^{m+1} R^k(u) + h(u).$}
\end{figure}

\begin{proof}
Write $T_k(x) = (x/2)_+ - (x -2^{1-2k})_+ = T_+(x) -T_-^k(x)$ with $T_+(x) = (x/2)_+$ and $T_-^k(x) = (x -2^{1-2k})_+$ and let $h:[0,1] \rightarrow [0, \infty)$ be a non-negative function. In a first step we show that there is a network $N_m$ with $m$ hidden layers and width vector $(3,3,\ldots,3,1)$ that computes the function
\begin{align*}
	\big( T_+(u) , T_-^1(u), h(u) \big) \mapsto \sum_{k=1}^{m+1} R^k(u) + h(u),
\end{align*}
for all $u \in [0,1].$ The proof is given in Figure \ref{fig.mult_net}. Notice that all parameters in this networks are bounded by one. In a next step, we show that there is a network with $m+3$ hidden layers that computes the function
\begin{align*} 
	(x,y) \mapsto \Big( \sum_{k=1}^{m+1} R^k\Big( \frac{x-y+1}{2}\Big) - \sum_{k=1}^{m+1} R^k\Big( \frac{x+y}{2}\Big) + \frac{x+y}{2}  - \frac 14 \Big)_+ \wedge 1.
\end{align*}
Given input $(x,y),$ this network computes in the first layer
\begin{align*}
	\Big( T_+\Big(\frac{x-y+1}{2}\Big), T_-^1\Big(\frac{x-y+1}{2}\Big), \Big(\frac{x+y}{2}\Big)_+,
	T_+\Big(\frac{x+y}{2}\Big), T_-^1\Big(\frac{x+y}{2}\Big), \frac 14 \Big).
\end{align*}
On the first three and the last three components, we apply the network $N_m.$ This gives a network with $m+1$ hidden layers and width vector $(2,6,\ldots,6,2)$ that computes
\begin{align*}
	(x,y) \mapsto \Big( \sum_{k=1}^{m+1} R^k\Big( \frac{x-y+1}{2}\Big) + \frac{x+y}{2}, \sum_{k=1}^{m+1} R^k\Big( \frac{x+y}{2}\Big) + \frac 14 \Big). 
\end{align*}
Apply to the output the two hidden layer network $(u,v) \mapsto (1 -(1-(u-v))_+)_+ =(u-v)_+\wedge 1.$ The combined network $ \Mult_m(x,y)$ has thus $m+4$ hidden layers and computes 
\begin{align} 
	(x,y) \mapsto \Big( \sum_{k=1}^{m+1} R^k\Big( \frac{x-y+1}{2}\Big) - \sum_{k=1}^{m+1} R^k\Big( \frac{x+y}{2}\Big) + \frac{x+y}{2}  - \frac 14 \Big)_+ \wedge 1.
	\label{eq.output_mult_network}
\end{align}
This shows that the output is always in $[0,1].$ By \eqref{eq.polarization} and Lemma \ref{lem.square_approx}, $|\Mult_m(x,y) -xy| \leq 2^{-m}.$

By elementary computations, one can check that $R^1((1-u)/2)=R^1( (1+u)/2)$ and $R^2((1+u)/2)=R^2(u/2)$ for all $0\leq u \leq 1.$ Therefore, $R^k((1-u)/2)=R^k( (1+u)/2)$ for all $k\geq 1$ and $R^k((1+u)/2)=R^k(u/2)$ for all $k\geq 2.$ This shows that the output \eqref{eq.output_mult_network} is zero for all inputs $(0,y)$ and $(x,0).$
\end{proof}

\begin{lem2}
\label{lem.multi_mult}
For any positive integer $m,$ there exists a network $$\Mult_m^r \in \mF((m+5) \lceil \log_2 r \rceil , (r, 6r,6r, \ldots, 6r,1))$$ such that $\Mult_m^r \in [0,1]$ and
\begin{align*}
	\Big|\Mult_m^r 
	(\bx)
	- \prod_{i=1}^r x_i \Big| \leq r^2 2^{-m}, \quad \text{for all} \ \ \bx=(x_1, \ldots,x_r)\in [0,1]^r.
\end{align*}
Moreover, $\Mult_m^r(\bx)=0$ if one of the components of $\bx$ is zero. 
\end{lem2}

\begin{proof}
Let $q:= \lceil \log_2 (r) \rceil$. Let us now describe the construction of the $\Mult_m^r$ network. In the first hidden layer the network computes 
\begin{align}
	(x_1, \ldots, x_r) \mapsto (x_1, \ldots, x_r, \underbrace{1,\ldots,1}_{2^q-r}).
	\label{eq.1st_layer_mult}
\end{align}
Next, apply the network $\Mult_m$ in Lemma \ref{lem.mult} to the pairs $(x_1,x_2),$ $(x_3,x_4)$,\ldots, $(1,1)$ in order to compute $(\Mult_m(x_1,x_2), \Mult_m(x_3,x_4), \ldots, \Mult_m(1,1)) \in \mathbb{R}^{2^{q-1}}.$  Now, we pair neighboring entries and apply $\Mult_m$ again. This procedure is continued until there is only one entry left. The resulting network is called $\Mult_m^r$ and has $q(m+5)$ hidden layers and all parameters bounded by one. 

If $a,b,c,d \in [0,1],$ then by Lemma \ref{lem.mult} and triangle inequality, $|\Mult_m(a,b) - cd|\leq 2^{-m}+ |a-c| +|b-d|.$ By induction on the number of iterated multiplications $q,$ we therefore find that $|\Mult_m^r(\bx)- \prod_{i=1}^r x_i| \leq 3^{q-1} 2^{-m}\leq r^2 2^{-m}$ since $\log_2(3) \approx 1.58 < 2.$

From Lemma \ref{lem.mult} and the construction above, it follows that $\Mult_m^r(\bx)=0$ if one of the components of $\bx$ is zero.
\end{proof}

In the next step, we construct a sufficiently large network that approximates all monomials $x_1^{\alpha_1}\cdot \ldots \cdot x_r^{\alpha_r}$ for non-negative integers $\alpha_i$ up to a certain degree. As common, we use multi-index notation $\bx^{\balpha}:=x_1^{\alpha_1}\cdot \ldots \cdot x_r^{\alpha_r},$ where $\balpha= (\alpha_1, \ldots, \alpha_r)$ and $|\balpha |:= \sum_\ell |\alpha_\ell |$ is the degree of the monomial. 

The number of monomials with degree $|\balpha | < \gamma$ is denoted by $C_{r,\gamma}.$ Obviously, $C_{r,\gamma} \leq (\gamma +1)^r$ since each $\alpha_i$ has to take values in $\{0,1,\ldots,\lfloor \gamma \rfloor\}.$

\begin{lem2}
\label{lem.monomials}
For $\gamma >0$ and any positive integer $m,$ there exists a network 
\begin{align*}
	\Mon_{m,\gamma}^r \in \mF \big( 1+(m+5) \lceil \log_2 (\gamma \vee 1) \rceil , (r, 6\lceil \gamma\rceil C_{r,\gamma}, \ldots, 6\lceil \gamma\rceil C_{r,\gamma}, C_{r,\gamma}) \big),
\end{align*}
such that $\Mon_{m,\gamma}^r \in [0,1]^{C_{r,\gamma}}$ and
\begin{align*}
	\Big| \Mon_{m, \gamma}^r(\bx)  -  (\bx^{\balpha})_{|\balpha| < \gamma} \Big| _\infty \leq \gamma^2 2^{-m}, \quad \text{for all} \ \bx \in [0,1]^r.
\end{align*}
\end{lem2}

\begin{proof}
For $|\balpha|\leq 1,$ the monomials are linear or constant functions and there exists a shallow network in the class $\mF(1,(1,1,1))$ with output exactly $\bx^{\balpha}.$

By taking the multiplicity into account in \eqref{eq.1st_layer_mult}, Lemma \ref{lem.multi_mult} can be extended in a straightforward way to monomials. For $|\balpha|\geq 2,$ this shows the existence of a network in the class $\mF ( (m+5) \lceil \log_2 |\balpha| \rceil , (r, 6|\balpha|, \ldots, 6|\balpha|, 1))$ taking values in $[0,1]$ and approximating $\bx^{\balpha}$ up to sup-norm error $|\balpha|^2 2^{-m}.$ Using the parallelization and depth synchronization properties in Section 7.1 yields then the claim of the lemma. 
\end{proof}

\section{Proof of Theorem 5}

We follow the classical idea of function approximation by local Taylor approximations that has previously been used for network approximations in \cite{yarotski2017}. For a vector $\ba \in [0,1]^r$ define 
\begin{align}
	P_{\ba}^{\beta} f(\bx) = \sum_{0 \leq |\balpha|< \beta} (\partial^{\balpha} f) (\ba) \frac{(\bx - \ba)^{\balpha}}{\balpha !}.
	\label{eq.Taylor_approx}
\end{align}
By Taylor's theorem for multivariate functions, we have for a suitable $\xi \in [0,1],$ 
\begin{align*}
	f(\bx)= \sum_{\balpha : |\balpha| < \beta-1} (\partial^{\balpha} f )(\ba)\frac{(\bx-\ba)^{\balpha}}{\balpha !}
	+ 
	\sum_{\beta - 1 \leq |\balpha | < \beta } (\partial^{\balpha} f )(\ba+\xi (\bx-\ba))\frac{(\bx-\ba)^{\balpha}}{\balpha !}.
\end{align*}
We have $|(\bx - \ba)^{\balpha}| =\prod_i |x_i-a_i |^{\alpha_i} \leq |\bx-\ba|_\infty^{|\balpha|}.$ Consequently, for $f\in \mC_r^\beta([0,1]^r,K),$
\begin{align}
	&\big | f(\bx) - P_{\ba}^{\beta} f(\bx)  \big| \notag \\
	&\leq 
	\sum_{\beta-1 \leq |\balpha |< \beta} \frac{|(\bx-\ba)^{\balpha} |}{\balpha !}  
	 \big| (\partial^{\balpha} f )(\ba+\xi (\bx-\ba)) - (\partial^{\balpha} f )(\ba) \big| \label{eq.Hoeld_fct_approx}  \\
	&\leq 
	K |\bx-\ba|_\infty^{\beta}. \notag
\end{align}
We may also write \eqref{eq.Taylor_approx} as a linear combination of monomials 
\begin{align}
	P_{\ba}^{\beta} f(\bx) = \sum_{0 \leq |\bgamma| < \beta} \bx^{\bgamma} c_{\bgamma},
	\label{eq.Pf_monomial_sum}
\end{align}
for suitable coefficients $c_{\bgamma}.$ For convenience, we omit the dependency on $\ba$ in $c_{\bgamma}.$ Since $\partial^{\bgamma} P_{\ba}^{\beta} f(\bx) \, |_{\bx=0} = \bgamma ! c_{\bgamma},$ we must have
\begin{align*}
	c_{\bgamma} = \sum_{\bgamma \leq \balpha \&  |\balpha|< \beta} (\partial^{\balpha} f) (\ba) \frac{(- \ba)^{\balpha-\bgamma}}{\bgamma ! (\balpha -\bgamma)!}.
\end{align*} 
Notice that since $\ba \in [0,1]^r$ and $f\in \mC_r^\beta([0,1]^r,K),$ 
\begin{align}
	|c_{\bgamma}| \leq K/\bgamma ! \quad \text{and} \ \ \sum_{\bgamma \geq 0} |c_{\bgamma}| \leq K\prod_{j=1}^r \sum_{\gamma_j \geq 0} \frac{1}{\gamma_j!} = Ke^r.
	\label{eq.cgamma_bd}
\end{align}

Consider the set of grid points $\bD(M) :=\{\bx_{\bell} = (\ell_j/M)_{j=1,\ldots,r} : \bell =(\ell_1,\ldots, \ell_r) \in \{0,1,\ldots, M\}^r\}.$ The cardinality of this set is $(M+1)^r.$ We write $\bx_{\bell}=(x_j^{\bell})_{j}$ to denote the components of $\bx_{\bell}.$ Define
\begin{align*}
	P^\beta f (\bx) := \sum_{\bx_{\bell} \in \bD(M)} P_{\bx_{\bell}}^\beta f (\bx) \prod_{j=1}^r (1- M |x_j-x^{\bell}_j|)_+.
\end{align*}

\begin{lem2}
\label{lem.Hoeld_approx}
If $f\in \mC_r^\beta([0,1]^r, K),$ then $\| P^\beta f - f \|_{L^\infty[0,1]^r} \leq K M^{-\beta}.$
\end{lem2}

\begin{proof}
Since for all $\bx=(x_1, \ldots, x_r) \in [0,1]^r,$ 
\begin{align}
	\sum_{\bx_{\bell} \in \bD(M)} \prod_{j=1}^r (1- M |x_j-x^{\bell}_j|)_+
	= \prod_{j=1}^r \sum_{\ell=0}^{M}(1- M |x_j- \ell/M|)_+ =1,
	\label{eq.kernel_sum_one}
\end{align}
we have $f(\bx) = \sum_{\bx_{\bell} \in \bD(M): \|\bx- \bx_{\bell}\|_\infty \leq 1/M} \, f(\bx) \prod_{j=1}^r (1- M |x_j-x^{\bell}_j|)_+$ and with \eqref{eq.Hoeld_fct_approx},
\begin{align*}
	\big | P^\beta f (\bx) - f(\bx) \big| 
	\leq \max_{\bx_{\bell} \in \bD(M): \|\bx- \bx_{\bell}\|_\infty \leq 1/M} \big| P_{\bx_{\bell}}^\beta f (\bx) -  f(\bx) \big| 
	\leq K M^{-\beta}.
\end{align*}
\end{proof}

In a next step, we describe how to build a network that approximates $P^\beta f.$

\begin{lem2}
\label{lem.hat_fct_mult}
For any positive integers $M,m,$ there exists a network $$\Hat^r \in \mF\Big(2+(m+5)\lceil \log_2 r \rceil, \big(r, 6r(M+1)^r, \ldots, 6r(M+1)^r, (M+1)^r\big), s,1\Big)$$ with $s\leq 49 r^2 (M+1)^r (1+(m+5) \lceil \log_2 r \rceil),$ such that $\Hat^r \in [0,1]^{(M+1)^r}$ and for any $\bx=(x_1, \ldots, x_r) \in [0,1]^r,$
\begin{align*}
	\Big| \Hat^r(\bx) - \Big(\prod_{j=1}^r (1/M -  |x_j-x^{\bell}_j|)_+\Big)_{\bx_\ell \in \bD(M)} \Big|_\infty
	\leq r^2 2^{-m}.
\end{align*}
For any $\bx_\ell \in \bD(M),$ the support of the function $\bx \mapsto (\Hat^r(\bx))_{\bx_\ell}$ is moreover contained in the support of the function $\bx \mapsto \prod_{j=1}^r (1/M -  |x_j-x^{\bell}_j|)_+.$
\end{lem2}

\begin{proof}
The first hidden layer computes the functions $(x_j-\ell/M)_+$ and $(\ell/M-x_j)_+$ using $2r(M+1)$ units and $4r(M+1)$ non-zero parameters. The second hidden layer computes the functions $(1/M-  |x_j-\ell/M|)_+=(1/M -  (x_j-\ell/M)_+ - (\ell/M-x_j)_+)_+$ using $r(M+1)$ units and $3r(M+1)$ non-zero parameters. These functions take values in the interval $[0,1].$ This proves the result for $r=1.$ For $r>1,$ we compose the obtained network with networks that approximately compute the products $\prod_{j=1}^r (1/M - |x_j-\ell/M|)_+.$

By Lemma \ref{lem.multi_mult} there exist $\Mult_m^r$ networks in the class $$\mF((m+5) \lceil \log_2 r \rceil , (r, 6r,6r, \ldots, 6r,1))$$ computing $\prod_{j=1}^r (1/M - |x_j-x^{\bell}_j|)_+$ up to an error that is bounded by $r^2 2^{-m}.$ By (20), the number of non-zero parameters of one $\Mult_m^r$ network is bounded by $42 r^2 (1+(m+5) \lceil \log_2 r \rceil).$ As there are $(M+1)^r$ of these networks in parallel, this requires $6r (M+1)^r$ units in each hidden layer and $42 r^2 (M+1)^r (1+(m+5) \lceil \log_2 r \rceil)$ non-zero parameters for the multiplications. Together with the $7r(M+1)$ non-zero parameters from the first two layers, the total number of non-zero parameters is thus bounded by $49 r^2 (M+1)^r (1+(m+5) \lceil \log_2 r \rceil).$

By Lemma \ref{lem.multi_mult}, $\Mult_m^r(\bx)=0$ if one of the components of $\bx$ is zero. This shows that for any $\bx_\ell \in \bD(M),$ the support of the function $\bx \mapsto (\Hat^r(\bx))_{\bx_\ell}$ is contained in the support of the function $\bx \mapsto \prod_{j=1}^r (1/M -  |x_j-x^{\bell}_j|)_+.$
\end{proof}

\begin{proof}[Proof of Theorem 5]
All the constructed networks in this proof are of the form $\mF(L, \bp,s) =\mF(L, \bp,s, \infty)$ with $F=\infty.$ Let $M$ be the largest integer such that $(M+1)^r \leq N$ and define $L^*:=(m+5)\lceil \log_2 (\beta\vee r) \rceil .$ Thanks to \eqref{eq.cgamma_bd}, \eqref{eq.Pf_monomial_sum} and Lemma \ref{lem.monomials}, we can add one hidden layer to the network $\Mon_{m, \beta}^r$ to obtain a network $$Q_1 \in \mF\big( 2+L^* , (r, 6\lceil \beta\rceil C_{r,\beta}, \ldots, 6\lceil \beta\rceil C_{r,\beta}, C_{r,\beta}, (M+1)^r)\big),$$ such that $Q_1(\bx) \in [0,1]^{(M+1)^r}$ and for any $\bx\in [0,1]^r,$
\begin{align}
	\Big | Q_1(\bx) - \Big( \frac{P_{\bx_{\bell}}^\beta f (\bx)}{B}  + \frac 12\Big)_{\bx_{\bell} \in \bD(M)} \Big|_\infty \leq \beta^2 2^{-m}
	\label{eq.Q1_bd}
\end{align}
with $B:= \lceil 2Ke^r \rceil.$ By (20), the number of non-zero parameters in the $Q_1$ network is bounded by $6r(\beta+1) C_{r,\beta}+42(\beta+1)^2C_{r,\beta}^2(L^*+1) +C_{r,\beta}(M+1)^r.$

Recall that the network $\Hat^r$ computes the products of hat functions $\prod_{j=1}^r (1/M-  |x_j-x^{\bell}_j|)_+$ up to an error that is bounded by $r^2 2^{-m}.$ It requires at most $49 r^2 N (1+L^*)$ active parameters. Consider now the parallel network $(Q_1, \Hat^r).$ Observe that $C_{r,\beta} \leq (\beta+1)^r \leq N$ by the definition of $C_{r,\beta}$ and the assumptions on $N.$ By Lemma \ref{lem.hat_fct_mult}, the networks $Q_1$ and $\Hat^r$ can be embedded into a joint network $(Q_1, \Hat^r)$ with $2+L^*$ hidden layers, weight vector $(r, 6(r+\lceil \beta\rceil)N,\ldots, 6(r+\lceil \beta\rceil) N, 2(M+1)^r)$ and all parameters bounded by one. Using $C_{r,\beta} \vee (M+1)^r \leq N$ again, the number of non-zero parameters in the combined network $(Q_1, \Hat^r)$ is bounded by
\begin{align}
	\begin{split}
	&6r(\beta+1) C_{r,\beta}+42(\beta+1)^2C_{r,\beta}^2(L^*+1)\\
	&\ \ +C_{r,\beta}(M+1)^r + 49 r^2 N (1+L^*) \\
	&\leq 49 (r+\beta+1)^2 C_{r,\beta} N (1+L^*)\\
	&\leq 98 (r+\beta+1)^{3+r} N (m+5),
	\end{split}
	\label{eq.para_Q1_bd}
\end{align}
where for the last inequality, we used $C_{r,\beta} \leq (\beta+1)^r,$ the definition of $L^*$ and that for any $x\geq 1,$ $1+\lceil \log_2(x)\rceil\leq 2+\log_2(x)\leq 2(1+\log(x)) \leq 2x.$

Next, we pair the $\bx_{\bell}$-th entry of the output of $Q_1$ and $\Hat^r$ and apply to each of the $(M+1)^r$ pairs the $\Mult_m$ network described in Lemma \ref{lem.mult}. In the last layer, we add all entries. By Lemma \ref{lem.mult} this requires at most $42(m+5)(M+1)^r+(M+1)^r\leq 43 (m+5)N$ active parameters for the $(M+1)^r$ multiplications and the sum. Using Lemma \ref{lem.mult}, Lemma \ref{lem.hat_fct_mult}, \eqref{eq.Q1_bd} and triangle inequality, there exists a network $Q_2 \in \mF(3+(m+5)(1+\lceil \log_2 (\beta\vee r) \rceil), (r, 6(r+\lceil \beta\rceil) N, \ldots, 6(r+\lceil \beta\rceil)N,1))$ such that for any $\bx \in [0,1]^r,$ 
\begin{align}
\begin{split}
	&\Big | Q_2(\bx) - \sum_{\bx_{\bell} \in \bD(M)} \Big(\frac{P_{\bx_{\bell}}^\beta f (\bx)}{B} + \frac 12\Big)\prod_{j=1}^r \Big(\frac 1M-  |x_j-x^{\bell}_j|\Big)_+ \Big| \\
	&\leq \sum_{\bx_{\bell} \in \bD(M): \|\bx- \bx_{\bell}\|_\infty \leq 1/M} (1+r^2+\beta^2) 2^{-m}\\
	&\leq (1+r^2+\beta^2) 2^{r-m}.
	\end{split}\label{eq.Q2_estimate}
\end{align}
Here, the first inequality follows from the fact that the support of $(\Hat^r(\bx))_{\bx_\ell}$ is contained in the support of $\prod_{j=1}^r (1/M -  |x_j-x^{\bell}_j|)_+,$ see Lemma \ref{lem.hat_fct_mult}. Because of \eqref{eq.para_Q1_bd}, the network $Q_2$ has at most
\begin{align}
	\begin{split}
	&141 (r+\beta+1)^{3+r} N (m+5)
	\end{split}
	\label{eq.nr_param_Q2}
\end{align}
active parameters.

To obtain a network reconstruction of the function $f$, it remains to scale and shift the output entries. This is not entirely trivial because of the bounded parameter weights in the network. Recall that $B= \lceil 2K e^r \rceil .$ The network $x \mapsto BM^rx$ is in the class $\mF(3, (1,  M^r , 1, \lceil 2Ke^r\rceil,1))$ with shift vectors $\bv_j$ are all equal to zero and weight matrices $W_j$ having all entries equal to one. Because of $N \geq  (K+1)e^r,$ the number of parameters of this network is bounded by $2M^r+2\lceil 2Ke^r\rceil \leq 6 N.$ This shows existence of a network in the class $\mF(4, (1, 2, 2M^r ,2, 2\lceil 2Ke^r \rceil,  1)) $ computing $a \mapsto BM^r(a-c)$ with $c:=1/(2M^r).$ This network computes in the first hidden layer $(a-c)_+$ and $(c-a)_+$ and then applies the network $x \mapsto BM^rx$ to both units. In the output layer the second value is subtracted from the first one. This requires at most $6+12N$ active parameters. 

Because of \eqref{eq.Q2_estimate} and \eqref{eq.kernel_sum_one}, there exists a network $Q_3$ in
\begin{align*}
	 \mF\big(8+(m+5)(1+\lceil \log_2 (r\vee \beta) \rceil), (r, 6(r+\lceil \beta\rceil)N, \ldots, 6(r+\lceil \beta\rceil)N,1)\big)
\end{align*}
such that 
\begin{align*}
	&\Big | Q_3(\bx) - \sum_{\bx_{\bell} \in \bD(M)} P_{\bx_{\bell}}^\beta f (\bx)\prod_{j=1}^r \Big(1- M |x_j-x^{\bell}_j|\Big)_+ \Big| \\
	&\leq (2K+1)M^r (1+r^2+\beta^2) (2e)^r 2^{-m}, \ \  \text{for all} \ \bx \in [0,1]^r.
\end{align*}
With \eqref{eq.nr_param_Q2}, the number of non-zero parameters of $Q_3$ is bounded by 
\begin{align*}
	141 (r+\beta+1)^{3+r} N (m+6).
\end{align*}
Observe that  by construction $(M+1)^r \leq N \leq (M+2)^r \leq (3M)^r$ and hence $M^{-\beta}\leq N^{-\beta/r} 3^\beta.$ Together with Lemma \ref{lem.Hoeld_approx}, the result follows. 
\end{proof}

\section{Proofs for Section 7.2}

\begin{proof}[Proof of Lemma 4]
Throughout the proof we write $E=E_{f_0}.$ Define $\|g\|_n^2 := \frac{1}{n} \sum_{i=1}^n g(\bX_i)^2.$ For any estimator $\widetilde f$, we introduce $\widehat R_n(\widetilde f,f_0):=E\big[\| \widetilde f - f_0 \|_n^2 \big]$ for the empirical risk. In a first step, we show that we may restrict ourselves to the case $\log \mN_n \leq n.$ Since $R(\widehat f, f_0)\leq 4F^2,$ the upper bound trivially holds if $\log \mN_n \geq n.$ To see that also the lower bound is trivial in this case, let $\widetilde f \in \argmin_{f\in \mF} \sum_{i=1}^n (Y_i - f(\bX_i))^2$ be a (global) empirical risk minimizer. Observe that
\begin{align}
	\widehat R_n(\widehat f, f_0) - \widehat R_n(\widetilde f, f_0)
	= \Delta_n + E\Big[\frac{2}{n} \sum_{i=1}^n \eps_i \widehat f(\bX_i)\Big] - E\Big[\frac{2}{n} \sum_{i=1}^n \eps_i \widetilde f(\bX_i)\Big].
	\label{eq.lb_el_expansion}
\end{align} 
From this equation, it follows that $\Delta_n \leq 8F^2$ and this implies the lower bound in the statement of the lemma for $\log \mN_n \geq n.$ We may therefore assume $\log \mN_n \leq n.$ The proof is divided into four parts which are denoted by $(I)-(IV).$ 

{\itshape (I):} We relate the risk $R( \widehat f, f_0)= E[(\widehat f(\bX) - f_0(\bX) )^2 ]$ to its empirical counterpart $\widehat R_n( \widehat f, f_0)$ via the inequalities
\begin{align*}
		&(1- \eps) \widehat R_n(\widehat f,f_0)
		- \frac{F^2}{n\eps} \big(15\log \mN_n + 75\big)-26\delta F \\
		&\leq 
		R( \widehat f, f_0)
		\leq (1+ \eps) \Big( \widehat R_n(\widehat f,f_0)
		+(1+\eps) \frac{F^2}{n\eps} \big(12\log \mN_n + 70\big) + 26\delta F \Big).
\end{align*}
{\itshape (II):} For any estimator $\widetilde f$ taking values in $\mF,$ 
\begin{align*}
	\Big| E\Big[ \frac 2n \sum_{i=1}^n \eps_i  \widetilde f(\bX_i) \Big] \Big|
	\leq 2\sqrt{\frac{\widehat R_n(\widetilde f,f_0) (3\log \mN_n +1)}n} + 6\delta.
\end{align*}
{\itshape (III):} We have
\begin{align*}
	\widehat R_n(\widehat f,f_0)
	\leq (1+\eps)\Big[\inf_{f\in \mF} E\big[\big(f(\bX) - f_0(\bX) \big)^2 \big] + 6\delta + F^2\frac{6\log \mN_n+2}{n\eps}+\Delta_n \Big].
\end{align*}
{\itshape (IV):} We have
\begin{align*}
	\widehat R_n(\widehat f,f_0)
	\geq (1-\eps) \Big( \Delta_n - \frac{3\log \mN_n +1}{n\eps} - 12\delta\Big).
\end{align*}
Combining $(I)$ and $(IV)$ gives the lower bound of the assertion since $F \geq 1.$ The upper bound follows from $(I)$ and $(III).$

{\itshape (I):} Given a minimal $\delta$-covering of $\mF,$ denote the centers of the balls by $f_j.$ By construction there exists a (random) $j^*$ such that $\|\widehat f-f_{j^*}\|_\infty \leq \delta.$ Without loss of generality, we can assume that $\|f_j\|_\infty \leq F.$ Generate i.i.d. random variables $\bX_i',$ $i=1, \ldots, n$ with the same distribution as $\bX$ and independent of $(\bX_i)_{i=1, \ldots,n}.$ Using that $\|f_j\|_\infty, \|f_0\|_\infty, \delta  \leq F,$
\begin{align*}
	&\big | R( \widehat f, f_0) - \widehat R_n( \widehat f, f_0)\big| \\
	&= \Big| E\Big[\frac{1}{n} \sum_{i=1}^n \big( \widehat f(\bX_i') -f_0(\bX_i') \big)^2 
	- \frac{1}{n} \sum_{i=1}^n \big( \widehat f(\bX_i) -f_0(\bX_i) \big)^2 \Big] \Big| \notag \\
	&\leq E\Big[ \Big|\frac 1n \sum_{i=1}^n g_{j^*}(\bX_i, \bX_i') \Big| \Big] 
	+ 9\delta F,
\end{align*} 
with $g_{j^*}(\bX_i, \bX_i') := (f_{j^*}(\bX_i') -f_0(\bX_i') \big)^2- ( f_{j^*}(\bX_i) -f_0(\bX_i) )^2.$ Define $g_j$ in the same way with $f_{j^*}$ replaced by $f_j.$ Similarly, set $r_j := \sqrt{n^{-1} \log \mN_n } \vee E^{1/2}[(f_j(\bX) -f_0(\bX) )^2]$ and define $r^*$ as $r_j$ for $j=j^*,$ which is the same as 
\begin{align*}
	r^*&=\sqrt{n^{-1} \log \mN_n } \vee E^{1/2}[(f_{j^*}(\bX) -f_0(\bX) )^2| (\bX_i,Y_i)_i] 	\\
	&\leq \sqrt{n^{-1} \log \mN_n } + E^{1/2}[(\widehat f(\bX) -f_0(\bX) )^2| (\bX_i,Y_i)_i] + \delta,
\end{align*}
where the last part follows from triangle inequality and $\|f_{j^*}-\widehat f\|_\infty \leq \delta.$

For random variables $U,T,$ Cauchy-Schwarz gives $E[UT]\leq E^{1/2}[U^2]E^{1/2}[T^2].$ Choose $U= E^{1/2}[(\widehat f(\bX) -f_0(\bX) )^2| (\bX_i,Y_i)_i] $ and $T:=\max_j  | \sum_{i=1}^n g_{j}(\bX_i, \bX_i') /(r_j F)|.$ Using that $E[U^2]=R( \widehat f, f_0)$
\begin{align}
\begin{split}
	&\big | R( \widehat f, f_0) - \widehat R_n( \widehat f, f_0) \big| \\
	&\leq   \frac{F}n R( \widehat f, f_0)^{1/2} E^{1/2}[T^2]+\frac{F}{n}\Big(\sqrt{\frac{\log \mN_n }n}+ \delta\Big)E[T]  + 9\delta F.
\end{split}	
	\label{eq.Rf_estimate}
\end{align}
Observe that $E[g_j(\bX_i, \bX_i') ]=0,$ $|g_j(\bX_i, \bX_i')|\leq 4F^2$ and 
\begin{align*}
	\Var\big(g_{j}(\bX_i, \bX_i')\big)
	&= 2\Var\big((f_j(\bX_i) -f_0(\bX_i) )^2\big) \\
	&\leq 2E\big[\big(f_j(\bX_i) -f_0(\bX_i) \big)^4\big] \\
	&\leq 8F^2 r_j^2.
\end{align*}
Bernstein's inequality states that for independent and centered random variables $U_1, \ldots, U_n,$ satisfying $|U_i|\leq M,$ $P(|\sum_{i=1}^n U_i | \geq t)\leq 2\exp(-  t^2/[2Mt/3 + 2\sum_{i=1}^n\Var( U_i)]),$ cf. \cite{vdVaartWellner}. Combining Bernstein's inequality with a union bound argument yields
\begin{align*}
	P( T \geq t ) 
	\leq 1\wedge  2 \mN_n \max_j \, \exp\Big( - \frac{t^2}{8 t/(3r_j) +16 n}\Big).
\end{align*}
The first term in the denominator of the exponent dominates for large $t.$ Since $r_j\geq  \sqrt{n^{-1} \log \mN_n},$ we have $P( T \geq t ) \leq 2 \mN_n \exp( - 3t \sqrt{\log \mN_n}/(16\sqrt{n}))$ for all $t \geq 6 \sqrt{n \log \mN_n}.$ We therefore find
\begin{align*}
	E[T] & =  \int_0^\infty P(T \geq t) \, dt \\
	&\leq 6 \sqrt{n \log \mN_n} + \int_{6 \sqrt{n \log \mN_n}}^\infty 2\mN_n \exp\Big( - \frac{3t \sqrt{\log \mN_n}}{16\sqrt{n}}\Big) \, dt \\
	&\leq 6 \sqrt{n \log \mN_n}  + \frac{32}{3} \sqrt{\frac{n}{\log \mN_n}}.
\end{align*}
By assumption $\mN_n \geq 3$ and hence $\log \mN_n \geq 1.$ We can argue in a similar way as for the upper bound of $E[T]$ in order to find for the second moment
\begin{align*}
	E[T^2] 
	&=  \int_0^\infty P(T^2\geq u) \, du=\int_0^\infty P(T\geq \sqrt{u}) \, du \\
	&\leq 36 n\log \mN_n + \int_{36n \log \mN_n}^\infty 2\mN_n \exp\Big( - \frac{3\sqrt{u} \sqrt{\log \mN_n}}{16\sqrt{n}}\Big) \, du \\
	& \leq 36 n\log \mN_n + 2^8 n,
\end{align*}
where the second inequality uses $\int_{b^2}^\infty e^{-\sqrt{u} a} du = 2 \int_b^\infty se^{-sa} ds = 2(ba+1)e^{-ba}/a^2$ which can be obtained from substitution and integration by parts. With \eqref{eq.Rf_estimate} and $1\leq \log \mN_n \leq n,$
\begin{align}
\begin{split}
		&\big | R( \widehat f, f_0) - \widehat R_n( \widehat f, f_0) \big| \\
		&\leq  
		\frac{F}{n} R( \widehat f, f_0)^{1/2}\big(36 n\log \mN_n + 2^8 n\big)^{1/2}
		+F\frac{6\log \mN_n+11}{n} + 26 \delta F
\end{split}		
	\label{eq.exp_bd_in_proof1}
\end{align}
Let $a,b,c,d$ be positive real numbers, such that $|a-b|\leq 2\sqrt{a}c + d.$ Then, for any $\eps\in (0,1],$ 
\begin{align}
	(1-\eps)b -d - \frac{c^2}{\eps}  \leq a \leq (1+\eps) (b+d) + \frac{(1+\eps)^2}{\eps} c^2.
	\label{eq.abc_identity}
\end{align}
To see this observe that $|a-b|\leq 2\sqrt{a}c + d$ implies $a \leq \eps a/(1+\eps) +(1+\eps)c^2/\eps + (b+d).$ Rearranging the terms yields the upper bound. For the lower bound, we use the same argument and find $a\geq -\eps a/(1-\eps) - (1-\eps) c^2/\eps + (b-d)$ which gives \eqref{eq.abc_identity}. With $a= R(\widehat f , f_0),$ $b= \widehat R_n( \widehat f, f_0),$ $c=F\big(9n\log \mN_n + 64 n\big)^{1/2}/n,$ $d= F(6\log \mN_n+11)/n + 26 \delta F$ the asserted inequality of $(I)$ follows from \eqref{eq.exp_bd_in_proof1}. Notice that we have used $2\leq (1+\eps)/\eps$ for the upper bound. 

{\itshape (II):} Given an estimator $\widetilde f$ taking values in $\mF,$ let $j'$ be such that $\|\widetilde f-f_{j'}\|_\infty \leq \delta.$ We have $| E[\sum_{i=1}^n \eps_i ( \widetilde f(\bX_i) - f_{j'}(\bX_i))]|\leq  \delta E[\sum_{i=1}^n |\eps_i|] \leq n \delta.$ Since $E[\eps_i f_0(\bX_i)] = E[E[\eps_i f_0(\bX_i) \, | \, \bX_i]]=0,$ we also find
\begin{align}
	\begin{split}
	\Big| E\Big[ \frac 2n \sum_{i=1}^n \eps_i  \widetilde f(\bX_i) \Big] \Big|
	&=
	\Big| E\Big[ \frac 2n \sum_{i=1}^n \eps_i  \big(\widetilde f(\bX_i) - f_0(\bX_i) \big)\Big] \Big| \\
	&\leq 2\delta + \frac 2{\sqrt{n}} E\Big[ (\|\widetilde f - f_0\|_n + \delta ) |\xi_{j'} | \Big]
	\end{split}
	\label{eq.oracle_gen4}
\end{align}
with
\begin{align*}
	\xi_j := \frac{\sum_{i=1}^n \eps_i (f_j(\bX_i)-f_0(\bX_i))}{\sqrt{n} \|f_j - f_0\|_n } .
\end{align*}
Conditionally on $(\bX_i)_i,$ $\xi_j \sim \mN(0,1).$  With Lemma \ref{lem.max_normal}, we obtain $E[\xi_{j'}^2] \leq E[ \max_j \xi_j^2] \leq 3 \log \mN_n + 1.$ Using Cauchy-Schwarz,
\begin{align}
	E\Big[ (\|\widetilde f - f_0\|_n + \delta ) |\xi_{j'} | \Big]
	\leq \Big( \widehat R_n(\widetilde f,f_0)^{1/2} + \delta \Big) \sqrt{3 \log \mN_n + 1}.
	 \label{eq.oracle_gen5}
\end{align}
Because of $\log \mN_n \leq n,$ we have $2n^{-1/2}\delta \sqrt{3\log \mN_n +1} \leq 4\delta.$ Together with \eqref{eq.oracle_gen4} and \eqref{eq.oracle_gen5} the result follows.

{\itshape (III):} For any fixed $f\in\mF,$ $E[\tfrac 1n \sum_{i=1}^n (Y_i-\widehat f(\bX_i))^2] \leq E[\tfrac 1n\sum_{i=1}^n (Y_i-f(\bX_i))^2]+\Delta_n.$ Because of $\bX_i \stackrel{\mD}{=}\bX$ and $f$ being deterministic, we have $E[\| f - f_0\|_n^2]= E[( f(\bX) - f_0(\bX) )^2].$ Since also $E[\eps_i f(\bX_i)] = E[E[\eps_i f(\bX_i) \, | \, \bX_i]]=0,$
\begin{align*}
	\widehat R_n (\widehat f, f_0)
	&\leq E[\| f - f_0\|_n^2]  + E\Big[\frac{2}{n}\sum_{i=1}^n \eps_i \widehat f(\bX_i)\big) \Big]+\Delta_n \\
	&\leq E[( f(\bX) - f_0(\bX) )^2] + 2\sqrt{\frac{\widehat R_n(\widehat f,f_0) (3\log \mN_n +1)}n} + 6\delta + \Delta_n
\end{align*}
using for the second inequality $(II).$ Applying \eqref{eq.abc_identity} to $a:=\widehat R_n(\widehat f,f_0),$ $b:=0,$ $c:= \sqrt{(3 \log \mN_n + 1)/n},$ $d:= E[(f(\bX) - f_0(\bX))^2]+6\delta+\Delta_n,$ yields $(III).$

{\itshape (IV):} Let $\widetilde f \in \argmin_{f\in \mF} \sum_{i=1}^n (Y_i - f(\bX_i))^2$ be an empirical risk minimizer. Using \eqref{eq.lb_el_expansion}, $(II)$ and $(1-\eps)/\eps+1=1/\eps,$ we find
\begin{align*}
	&\widehat R_n(\widehat f, f_0) - \widehat R_n(\widetilde f, f_0)\\
	&\geq  
	\Delta_n  - 2\sqrt{\frac{\widehat R_n(\widehat f,f_0) (3\log \mN_n +1)}n} - 2\sqrt{\frac{\widehat R_n(\widetilde f,f_0) (3\log \mN_n +1)}n} - 12\delta \\
	&\geq 
	\Delta_n - \frac{\eps }{1-\eps} \widehat R_n(\widehat f,f_0) 
	- \widehat R_n(\widetilde f,f_0) - \frac{3\log \mN_n +1}{n\eps} - 12\delta.
\end{align*}	
Rearranging of the terms leads then to the conclusion of $(IV).$ 
\end{proof}

\begin{lem2}
\label{lem.max_normal}
Let $\eta_j \sim \mathcal{N}(0,1),$ then $E[\max_{j=1, \ldots,M} \eta_j^2] \leq 3\log M +1.$
\end{lem2}


\begin{proof}
Let $Z=\max_{j=1,\ldots,M}\eta_j^2.$ Since $Z\leq \sum_j \eta_j^2,$ we have $E[Z]\leq M.$ For $M\in \{1,2,3\}$ it can be checked that $M \leq 3\log(M)+1.$ It is therefore enough to consider $M \geq 4.$ Mill's ratio gives $P(|\eta_1|\geq \sqrt{t})=2P(\eta_1\geq \sqrt{t})\leq 2e^{-t/2}/(\sqrt{2\pi t}).$ For any $T,$ we have using the union bound,
\begin{align*}
	E[Z]
	&= \int_0^\infty P(Z\geq t) dt
	\leq T + \int_T^\infty P(Z\geq t) dt
	\leq T+M \int_T^\infty P(\eta_1^2 \geq t) dt \\
	& \leq T+ M \int_T^\infty \frac{2}{\sqrt{2\pi t}} e^{-t/2} dt
	\leq T+ \frac{2M}{\sqrt{2\pi T}}  \int_T^\infty e^{-t/2} dt \\
	&=  T+ \frac{4M}{\sqrt{2\pi T}}e^{-T/2}.
\end{align*}
For $T=2\log M$ and $M\geq 4,$ we find $E[Z]\leq 2\log M+ 2/\sqrt{\pi \log M}\leq 2\log M+1.$
\end{proof}

\begin{proof}[Proof of Lemma 5]
Given a neural network $$f(\bx) = W_L \sigma_{\bv_L}  W_{L-1}  \sigma_{\bv_{L-1}}  \cdots  W_1 \sigma_{\bv_1}  W_0\bx,$$ define for $k \in \{1, \ldots, L\},$ $A_k^+ f : [0,1]^d \rightarrow \mathbb{R}^{p_k},$
\begin{align*}
	&A_k^+ f(\bx)  = \sigma_{\bv_k}   W_{k-1}  \sigma_{\bv_{k-1}}  \cdots  W_1 \sigma_{\bv_1}  W_0\bx
\end{align*}
and $A_k^- f : \mathbb{R}^{p_{k-1}} \rightarrow \mathbb{R}^{p_{L+1}},$
\begin{align*}
	&A_k^- f(\by)  = W_L  \sigma_{\bv_L}   W_{L-1}  \sigma_{\bv_{L-1}}  \cdots  W_k \sigma_{\bv_k}  W_{k-1}\by.
\end{align*}
Set $A_0^+ f(\bx) = A_{L+2}^- f(\bx) =\bx$ and notice that for $f \in \mF(L, \bp),$ $| A_k^+ f(\bx) |_\infty \leq \prod_{\ell =0}^{k-1} (p_\ell +1).$ For a multivariate function $h,$ we say that $h$ is Lipschitz if $|h(\bx)-h(\by)|_\infty \leq L|\bx-\by|_\infty$ for all $\bx,\by$ in the domain. The smallest $L$ is the Lipschitz constant. Composition of two Lipschitz functions with Lipschitz constants $L_1$ and $L_2$ gives again a Lipschitz function with Lipschitz constant $L_1L_2.$ Therefore, the Lipschitz constant of $A_k^- f$ is bounded by $\prod_{\ell = k-1}^L p_\ell.$ Fix $\eps>0.$ Let $f, f^* \in \mF(L, \bp, s)$ be two network functions, such that all parameters are at most $\eps$ away from each other. Denote the parameters of $f$ by $(v_k, W_k)_k$ and the parameters of $f^*$ by $(v_k^*,W_k^*)_k.$ Then, we can bound the absolute value of the difference by ($\sigma_{\bv_{L+1}}$ is defined as the identity)
\begin{align*}
	\big |f(\bx) - f^*(\bx)\big |
	&\leq   \sum_{k=1}^{L+1} \Big | A_{k+1}^-f  \sigma_{\bv_k}W_{k-1}  A_{k-1}^+ f^*(\bx)
	- A_{k+1}^-f  \sigma_{\bv_k^*}W_{k-1}^*  A_{k-1}^+ f^*(\bx) \Big|\\
	&\leq  \sum_{k=1}^{L+1} \Big(\prod_{\ell = k}^L p_\ell \Big)  \big |  \sigma_{\bv_k}W_{k-1}  A_{k-1}^+ f^*(\bx)
	-  \sigma_{\bv_k^*}W_{k-1}^*  A_{k-1}^+ f^*(\bx) \big |_\infty \\
	&\leq  \sum_{k=1}^{L+1} \Big( \prod_{\ell = k}^L p_\ell \Big) \Big(\big |  (W_{k-1} -W_{k-1}^*)  A_{k-1}^+ f^*(\bx) \big |_\infty + |\bv_k-\bv_k^*|_\infty \Big)\\
	&\leq \eps  \sum_{k=1}^{L+1} \Big( \prod_{\ell = k}^L p_\ell \Big)  \Big( p_{k-1} \big | A_{k-1}^+ f^*(\bx) \big |_\infty+ 1 \Big)\\
	&\leq \eps V (L+1),
\end{align*}
using $V:=\prod_{\ell = 0}^{L+1} (p_\ell +1)$ for the last step. By (20) the total number of parameters is therefore bounded by $T:=\sum_{\ell=0}^L (p_\ell +1) p_{\ell+1}\leq (L+1)2^{-L}\prod_{\ell = 0}^{L+1} (p_\ell +1)\leq V$ and there are $\binom{T}{s}\leq V^s$ combinations to pick $s$ non-zero parameters. Since all the parameters are bounded in absolute value by one, we can discretize the non-zero parameters with grid size $\delta / (2(L+1) V)$ and obtain for the covering number
\begin{align*}
	\mathcal{N}\Big( \delta, \mF(L, \bp, s, \infty), \|\cdot \|_\infty \Big)
	&\leq \sum_{s^* \leq s} \big( 2\delta^{-1} (L+1) V^2 \big)^{s^*}
	\leq \big( 2\delta^{-1} (L+1) V^2 \big)^{s+1}.
\end{align*}
Taking logarithms yields the result.
\end{proof}

\bibliographystyle{acm}       
\bibliography{bibDL}           

\end{document}